\definecolor{rojo}{rgb}{1,0,0} 
\newtheorem{thm}{Theorem}[section]
\newtheorem{cor}[thm]{Corollary}
\newtheorem{lem}[thm]{Lemma}
\newtheorem{prop}[thm]{Proposition}
\newtheorem{example}{Example}[section]
\theoremstyle{definition}
\newtheorem{defn}[thm]{\textbf{Definition}}
\theoremstyle{definition}
\theoremstyle{remark}
\newtheorem{rem}[thm]{Remark}
\renewcommand {\k} {\Bbbk}
\newcommand {\C} {\mathbb{C}}
\newcommand{\ben}{\begin{equation}}
\newcommand{\een}{\end{equation}}
\newcommand{\complex}{\ensuremath{{\mathbb C}}}
\newcommand{\op}{\operatorname}
\newcommand{\al}{\alpha}
\newcommand{\be}{\beta}
\newcommand{\ga}{\gamma}
\newcommand{\de}{\Delta}
\newcommand{\rt}{\rightarrow}
\newcommand{\ma}{\mathcal}
\newcommand{\mb}{\mathbb}
\newcommand{\var}{\varepsilon}
\newcommand{\A}{\ma{A}}
\newcommand{\ot}{\otimes}
\newcommand{\h}{\op{H}}
\begin{document}

\title{Constructing nearly Frobenius algebras}
\author{ \large{ Dalia Artenstein\thanks{IMERL, Facultad de Ingenier\'ia, Montevideo, Uruguay}, Ana Gonz\'alez\thanks{IMERL, Facultad de Ingenier\'ia, Montevideo, Uruguay} and Marcelo Lanzilotta\thanks{IMERL, Facultad de Ingenier\'ia, Montevideo, Uruguay}}}

\date{}

\maketitle

\medskip
\renewcommand{\abstractname}{Abstract}
\begin{abstract}
In the first part we study nearly Frobenius algebras. The concept of
nearly Frobenius algebras is a generalization of the concept of
Frobenius algebras. Nearly Frobenius algebras do not have traces,
nor they are self-dual. We prove that the known constructions:
direct sums, tensor, quotient of nearly Frobenius algebras admit natural nearly Frobenius structures.\\
 In the second part we study algebras associated to some families of quivers and the nearly Frobenius
structures that they admit. As a main theorem, we prove that an
indecomposable algebra associated to a bound quiver $(Q,I)$ with no
monomial relations admits a non trivial nearly Frobenius structure
if and only if the quiver is $\overrightarrow{\mb{A}_n}$ and $I=0$.
We also present an algorithm that determines the number of
independent nearly Frobenius structures for Gentle algebras without
oriented cycles.
\end{abstract}
\section*{Introduction}
\hspace{.5cm}A Frobenius algebra over a field $\k$ is a
(non-necessarily commutative) associative algebra $A$, together with
a non-degenerate trace $\var: A \rightarrow \k$. In other words we
have that $\langle a,b\rangle =\var(ab)$ is a non-degenerate
bilinear form. They have been studied since the 1930's, specially in
representation theory, for their very nice duality properties.
 In recent times the surprising connection found to topological quantum field theories has made them subject of renewed interest.\\
An important example for us, of Frobenius algebra, is the Poincar\'e
algebra associated to every compact closed manifold $M$,
$A=\h^*(M)$. In this case we can define the trace as $\var(w)
=\int_M w,$ for $w\in \h^*(M)$. It is a classical result that
Poincar\'e duality is equivalent to the assertion that this trace is
non-degenerate. In topology this fact manifests in many ways, for
instance in the existence of an intersection product in homology
that becomes a coproduct in cohomology. The coproduct $\de$ is the
composition of the Poincar\'e duality isomorphism
$\xymatrix{D:\h_*(M)\ar[r]^{\cong}&\h^*(M)}$ with the dual map for
the ordinary cup product $\mu:\h^*(M)\ot\h^*(M)\rightarrow\h^*(M)$.
Note that if we consider the case of a non-compact manifold $M$, its
cohomology algebra is no longer a Frobenius algebra, but we may ask
ourselves what structure remains. In this way we arrive at the
following definition. \\A \emph{nearly Frobenius algebra} $A$ is an
algebra together with a coassociative coproduct $\de:A\rightarrow
A\ot A$ such that $\de$ is an $A$-bimodule morphism. To the best of
our knowledge this notion was first isolated by R. Cohen and V.
Godin, see \cite{cohen}. A good reference for this topic is the book
\cite{AnaErnesto}.

The other objects studied in this work are the quivers and the
associated algebras. It is a known result that  to each finite
dimensional basic algebra over an algebraically closed field $\k$
corresponds a graphical structure, called a \emph{quiver}, and that,
conversely, to each quiver corresponds an associative $\k$-algebra,
which has an identity and is finite dimensional under some
conditions. Similarly, using the quiver associated to an algebra
$A$, it will be possible to visualice a (finitely generated)
$A$-module as a family of (finite dimensional) $\k$-vector spaces
connected by linear maps (see \cite{ASS06}). The idea of such a
graphical representation seems to go back to the late forties but it
became widespread in the early seventies, mainly due to Gabriel
\cite{Gabriel72}, \cite{Gabriel73}. In an explicit form, the notions
of quiver and linear representation of quiver were introduced by
Gabriel in \cite{Gabriel72}. It was the starting point of the modern
representation theory of associative algebras.

In section 1 we present known concepts required in the rest of the
work. We dedicate section 2 to develop the concept of nearly
Frobenius algebras. Studying nearly Frobenius structures over an
algebra we prove that this family defines a $\k$-vector space. This
result permit us to define the Frobenius dimension of an algebra as
the dimension of this vector space. Moreover, in this section we
determine the Frobenius dimension of particular algebras as the
matrix algebra, the group algebra and the truncated polynomial
algebra. All these cases verify that $\op{Frobdim}\bigl(A\bigr)\leq
\op{dim}_\k\bigl(A\bigr)$. In section 3 we show that known
constructions, as opposite algebra, direct sum, tensor product and
quotient of nearly Frobenius algebras admit natural nearly Frobenius
structures. The last section is divided in three parts. In the first
part we prove that an indecomposable algebra associated to a bound
quiver $\bigl(Q,I\bigr)$ with no monomial relations  admits a non
trivial nearly Frobenius structure if and only if the quiver is
$\overrightarrow{\mb{A}_n}$ and $I=0$. Moreover, in this case the
Frobenius dimension is one. In the second part we deal with gentle
algebras. If the quiver associated to a gentle algebra $A$ has no
oriented cycles we show that the Frobenius dimension of $A$ is
finite and  we determine this number by an algorithm. In the last
part we exhibit a family of algebras $A=\bigl\{A_C\bigr\}_C$ given
by bound quivers for which
$\op{Frobdim}\bigl(A_C\bigr)>\op{dim}_\k\bigl(A_C\bigr)$.

\section{Preliminaries}
\begin{defn}
A \emph{quiver} $Q = \bigl(Q_0,Q_1, s, t\bigr)$ is a quadruple
consisting of two sets: $Q_0$ (whose elements are called
\emph{points}, or \emph{vertices}) and $Q_1$ (whose elements are
called \emph{arrows}), and two maps $s, t : Q_1 \rt Q_0$ which
associate to each arrow $\al\in Q_1$ its \emph{source} $s(\al)\in
Q_0$ and its \emph{target} $t(\al)\in Q_0$, respectively.
\end{defn}
An arrow $\al\in Q_1$ of source $a = s(\al)$ and target $b = t(\al)$
is usually denoted by $\al: a\rt b$. A quiver $Q = \bigl(Q_0,Q_1, s,
t\bigr)$ is usually denoted briefly by $Q = (Q_0,Q_1)$ or even
simply by $Q$. Thus, a quiver is nothing but an oriented graph
without any restriction on the number of arrows between two points,
to the existence of loops or oriented cycles.

\begin{defn}
Let $Q = \bigl(Q_0,Q_1, s, t\bigr)$ be a quiver and $a, b\in Q_0$. A
\emph{path} of \emph{length} $l\geq 1$ with source $a$ and target
$b$ (or, more briefly, from $a$ to $b$) is a sequence
$$\bigl(a |\al_1,\al_2,\dots,\al_l| b\bigr),$$
where $\al_k\in Q_1$ for all $1\leq k\leq l$,
$s\bigl(\al_1\bigr)=a$, $t\bigl(\al_k\bigr)=s\bigl(\al_{k+1}\bigr)$
for each $1\leq k<l$, and $t\bigl(\al_l\bigr)=b$. Such a path is
denoted briefly by $\al_1\al_2\dots\al_l$.
\end{defn}

\begin{defn}
Let $Q$ be a quiver. The \emph{path algebra} $\k Q$ is the
$\k$-algebra whose underlying $\k$-vector space has as its basis the
set of all paths $\bigl(a|\al_1,\al_2,\dots,\al_l|b\bigr)$ of length
$l\geq 0$ in $Q$ and such that the product of two basis vectors
$\bigl(a|\al_1,\al_2,\dots,\al_l|b\bigr)$ and $\bigl(c
|\be_1,\be_2,\dots,\be_k|d\bigr)$ of $\k Q$ is defined by
$$\bigl(a|\al_1,\al_2,\dots,\al_l|b\bigr)\bigl(c |\be_1,\be_2,\dots,\be_k|d\bigr)=\delta_{bc}\bigl(a|\al_1,\dots,\al_l,\be_1,\dots,\be_k|d),$$
where $\delta_{bc}$ denotes the Kronecker delta. In other words, the
product of two paths $\al_1\dots\al_l$ and $\be_1\dots\be_k$ is
equal to zero if $t\bigl(\al_l\bigr) \neq s\bigl(\be_1\bigr)$ and is
equal to the composed path $\al_1\dots\al_l\be_1\dots\be_k$ if
$t\bigl(\al_l\bigr) = s\bigl(\be_1\bigr)$. The product of basis
elements is then extended to arbitrary elements of $\k Q$ by
distributivity.
\end{defn}

Assume, that $Q$ is a quiver and $\k$ is a field. Let $\k Q$ be the associated path algebra. Denote by $R_Q$ the two-sided ideal in $\k Q$ generated by all paths of length 1, i.e. all arrows. This ideal is known as the arrow ideal.\\
It is easy to see, that for any $m\geq 1$ we have that $R^m_Q$ is a
two-sided ideal generated by all paths of length $m$. Note, that we
have the following chain of ideals:
$$R^2_Q\supseteq R^3_Q \supseteq R^4_Q\supseteq\cdots$$
\begin{defn}
A two-sided ideal $I$ in $\k Q$ is said to be \emph{admissible} if
there exists $m\geq 2$ such that
$$R^m_Q\subseteq I \subseteq R^2_Q.$$
\end{defn}

\begin{defn}
Let $\k$ be a field, and $Q$ a quiver. We call a finite dimensional
$\k$-algebra $A$ \emph{gentle} if it is Morita equivalent to an
algebra $\displaystyle{\frac{\k Q}{I}}$ where $Q$ is a quiver and
$I\subset \k Q$ an admissible ideal subject to the following
conditions:
\begin{enumerate}
\item[(1)] \begin{itemize}
             \item at each vertex of $Q$ at most 2 arrows start,
             \item at each vertex of $Q$ at most 2 arrows finish;
           \end{itemize}
\item[(2)] \begin{itemize}
             \item for each arrow $\be\in Q_1$ there is at most one arrow $\ga\in Q_1$ with $\be\ga$ a path not contained in $I$,
             \item for each arrow $\be\in Q_1$ there is at most one arrow $\al\in Q_1$ with $\al\be$ a path not contained in $I$;
           \end{itemize}
\item[(3)] the ideal $I$ is generated by paths of length 2;
\item[(4)] \begin{itemize}
             \item for each arrow $\be\in Q_1$ there is at most one arrow $\ga'\in Q_1$ with $\be\ga'$ a path contained in $I$,
             \item for each arrow $\be\in Q_1$ there is at most one arrow $\al'\in Q_1$ with $\al'\be$ a path contained in $I$.
           \end{itemize}
\end{enumerate}
\end{defn}

\section{Nearly Frobenius algebras}
\hspace{.5cm}The concept of nearly Frobenius algebras is a
generalization of the concept of Frobenius algebras. Nearly
Frobenius algebras do not have traces, nor they are self-dual.

\begin{defn}
A $\k$-algebra $A$ is a \emph{nearly-Frobenius algebra} if  there
exists a linear map $\de:A\rt A\otimes A$ such that
\begin{enumerate}
\item $\de$ is coassociative
\begin{equation*}
\begin{split}
\xymatrix@R=1cm@C=1.5cm{
A\ar[r]^{\de}\ar[d]_{\de}& A\otimes A\ar[d]^{\de\otimes 1}\\
A\otimes A\ar[r]_{1\otimes \de}&A\otimes A\otimes A}
\end{split}
\end{equation*}
\item $\de$ is a morphism of $A$-bimodule
\begin{equation*}
\begin{split}
\xymatrix@R=1cm@C=1.5cm{
A\otimes A\ar[r]^{m}\ar[d]_{\de\otimes 1}&A\ar[d]^{\de}\\
A\otimes A\otimes A\ar[r]_{1\otimes m} &A\otimes A}\quad \xymatrix{
A\otimes A\ar[d]_{1\otimes \de}\ar[r]^{m}&A\ar[d]^{\de}\\
A\otimes A\otimes A\ar[r]_{m\otimes 1}&A\otimes A }
\end{split}
\end{equation*}
\end{enumerate}
\end{defn}

\begin{rem}\label{lemma1}
Any nearly Frobenius coproduct in a $\k$-algebra is determined by
the evaluation in the unit of the algebra structure, that is
if $A$ is a $\k$-algebra and $\de:A\rt A\ot A$ is a $\k$-linear map such that $$\de(x)=\bigl(x\ot 1\bigr)\de(1)=\de(1)\bigl(1\ot x\bigr)$$ for all $x\in A$. 
\end{rem}
\begin{thm}\label{teo1}
Let $A$ be a fixed $\k$-algebra and $\ma{E}$ the set of nearly
Frobenius coproducts of $A$ making it into a nearly Frobenius
algebra. Then $\ma{E}$ is a $\k$-vector space.
\end{thm}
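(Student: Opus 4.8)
The plan is to realize $\ma{E}$ as a linear subspace of the $\k$-vector space $\Hom_\k(A,A\ot A)$ of all $\k$-linear maps $A\rt A\ot A$. Since $\Hom_\k(A,A\ot A)$ is already a $\k$-vector space and the zero map is visibly a (trivial) nearly Frobenius coproduct, it suffices to show that $\ma{E}$ is closed under addition and under multiplication by scalars. To this end I would first reformulate membership in $\ma{E}$ via Remark \ref{lemma1}: a coproduct lies in $\ma{E}$ exactly when it is coassociative and satisfies the two bimodule squares, and these last two are equivalent to $\de(x)=(x\ot 1)\de(1)=\de(1)(1\ot x)$ for every $x\in A$.

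Next I would separate the axioms according to their degree in $\de$. The two bimodule squares translate into the identities $\de(ab)=(a\ot 1)\de(b)$ and $\de(ab)=\de(a)(1\ot b)$; both sides of each are \emph{linear} in $\de$, so if $\de_1,\de_2\in\ma{E}$ and $\la,\mu\in\k$ then $\de:=\la\de_1+\mu\de_2$ again satisfies them: indeed $\de(1)=\la\de_1(1)+\mu\de_2(1)$ and hence $(x\ot 1)\de(1)=\la\de_1(x)+\mu\de_2(x)=\de(x)=\de(1)(1\ot x)$. The delicate axiom is coassociativity, since $(\de\ot 1)\de$ and $(1\ot\de)\de$ are \emph{quadratic} in $\de$: expanding $\la\de_1+\mu\de_2$ produces cross terms such as $(\de_1\ot 1)\de_2$ that do not obviously cancel. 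This is the step I expect to be the main obstacle.

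The key observation that removes the obstacle is that, once the two bimodule identities hold, coassociativity is automatic and need not be checked separately. Writing $\de(1)=\sum_i u_i\ot v_i$, so that $\de(x)=(x\ot 1)\de(1)=\sum_i x u_i\ot v_i$, one expands
\[(\de\ot 1)\de(x)=\sum_i \de(x u_i)\ot v_i\qquad\text{and}\qquad (1\ot\de)\de(x)=\sum_i x u_i\ot \de(v_i).\]
Evaluating the remaining copies of $\de$ by the right identity $\de(x u_i)=\de(x)(1\ot u_i)$ in the first sum and the left identity $\de(v_i)=(v_i\ot 1)\de(1)$ in the second, both expressions reduce to $\sum_{i,j}x u_i\ot v_i u_j\ot v_j$ after the relabelling $i\leftrightarrow j$; hence $(\de\ot 1)\de=(1\ot\de)\de$ on all of $A$. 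Consequently membership in $\ma{E}$ is governed entirely by the two \emph{linear} bimodule conditions, coassociativity being a free consequence, so $\ma{E}$ is the solution set of a homogeneous linear system inside $\Hom_\k(A,A\ot A)$ and is therefore a $\k$-subspace. Equivalently, the assignment $\de\mapsto\de(1)$ is a linear isomorphism of $\ma{E}$ onto the subspace $\{\omega\in A\ot A:(x\ot 1)\omega=\omega(1\ot x)\ \text{for all}\ x\in A\}$, which makes the vector-space structure completely transparent.
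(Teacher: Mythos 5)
Your proposal is correct, and it proves the theorem by a genuinely different route than the paper. The paper also embeds $\ma{E}$ into $\Hom_\k(A,A\ot A)$ and notes that the bimodule conditions are linear, but it then attacks the quadratic coassociativity condition head-on: it fixes a basis $\{e_i\}$ of $A$, introduces structure constants $a_{ij}^k$ and the coefficients $b_{ij}$, $c_{ij}$ of $\de_1(1)$, $\de_2(1)$, derives the coordinate identities these must satisfy, and verifies by explicit summation that the cross terms $\bigl(\de_1\ot 1\bigr)\de_2-\bigl(1\ot\de_2\bigr)\de_1$ and $\bigl(\de_2\ot 1\bigr)\de_1-\bigl(1\ot\de_1\bigr)\de_2$ vanish, so that $\al\de_1+\be\de_2$ remains coassociative. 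You instead prove the sharper, basis-free statement that for a unital algebra the bimodule identities alone force coassociativity: writing $\de(1)=\sum_i u_i\ot v_i$, both $(\de\ot 1)\de(x)$ and $(1\ot\de)\de(x)$ reduce to $\sum_{i,j}xu_i\ot v_iu_j\ot v_j$, a computation I checked and which is valid (it uses exactly the consequences $\de(ab)=\de(a)(1\ot b)$ and $\de(a)=(a\ot 1)\de(1)$ of the bimodule axiom, available here since $A$ has a unit, as in Remark \ref{lemma1}). This makes membership in $\ma{E}$ a purely linear condition, so closure under linear combinations is immediate. What your approach buys: it eliminates the coordinate bookkeeping, shows the coassociativity axiom is redundant in the unital case, and yields the transparent linear isomorphism $\de\mapsto\de(1)$ of $\ma{E}$ onto $\bigl\{\omega\in A\ot A:(x\ot 1)\omega=\omega(1\ot x)\ \forall x\in A\bigr\}$, which in fact streamlines the Frobenius-dimension computations carried out later in the paper. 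What the paper's computation buys beyond this is only the slightly finer fact that each mixed cross term vanishes separately (your argument gives directly that their sum does, via coassociativity of $\de_1+\de_2$), but that refinement is not needed for the theorem.
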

\begin{proof}
To prove that $\ma{E}$ is a $\k$-vector space we prove that $\ma{E}$
is a subspace of $V=\bigl\{\de:A\rt A\ot A\;\mbox{linear
transformation}\bigr\}$, which is a $\k$-vector space. We consider
the linear map $\de=\al\de_1+\be\de_2:A\rightarrow A\otimes A$, with
$\al,\be\in\k$ where $\de_1, \de_2\in\ma{E}$. First we prove that
this map is an $A$-bimodule morphism, i.e. $\bigl(m\otimes
1\bigr)\bigl(1\otimes\de\bigr)=\de\circ m=\bigl(1\otimes
m\bigl)\bigl(\de\otimes 1\bigr)$
$$\begin{array}{rcl}
     \bigl(m\otimes 1\bigr)\bigl(1\otimes\de\bigr) & = & \bigl(m\otimes 1\bigr)\bigl(1\otimes(\al\de_1+\be\de_2)\bigr) \\
      & = & \al\bigl(m\otimes 1\bigr)\bigl(1\otimes\de_1\bigr)+\be\bigl(m\otimes 1\bigr)\bigl(1\otimes\de_2\bigr) \\
      & = & \al\de_1m+\be\de_2m=\de m.
  \end{array}
$$
To prove the coassociativity of $\de$: $\bigl(\de\otimes
1\bigr)\de=\bigl(1\otimes\de\bigr)\de$, we fix a basis, as
$\k$-vector space, of $A$, $\ma{B}=\{e_i\}_{i\in I}$ and we note, by
the Remark \ref{lemma1}, that
\begin{equation}\label{equ1}
\de\bigl(e_k\bigr)=\bigl(e_k\otimes
1\bigr)\de(1)=\de(1)\bigl(1\otimes e_k\bigr).
\end{equation}
If we represent $e_ie_j=\sum_ka_{ij}^ke_k$,
$\de_1(1)=\sum_{i,j}b_{ij}e_i\ot e_j$ and
$\de_2(1)=\sum_{i,j}c_{ij}e_i\ot e_j$, then the equation
(\ref{equ1}) by $\de_1$ and $\de_2$ is expressed as:
$$\sum_{i,j,l}b_{ij}a_{ki}^le_l\ot e_j=\sum_{i,j,l}b_{li}a_{ik}^je_l\ot e_j,$$
$$\sum_{i,j,l}c_{ij}a_{ki}^le_l\ot e_j=\sum_{i,j,l}c_{li}a_{ik}^je_l\ot e_j,$$
therefore
\begin{equation}\label{equ2}
 \sum_ib_{ij}a_{ki}^l=\sum_ib_{li}a_{ik}^j,
\end{equation}
\begin{equation}\label{equ3}
 \sum_ic_{ij}a_{ki}^l=\sum_ic_{li}a_{ik}^j.
\end{equation}

Using the definition of $\de$, the coassociativity condition is
equivalent to
\begin{equation}\label{equ4}
\bigl(\bigl(\de_2\otimes
1\bigr)\de_1-\bigl(1\otimes\de_1\bigr)\de_2\bigr)+\bigl(\bigl(\de_1\otimes
1 \bigr)\de_2-\bigl(1\otimes\de_2\bigr)\de_1\bigr)=0.
\end{equation}

We will prove that $\bigl(\de_1\otimes
1\bigr)\de_2-\bigl(1\otimes\de_2\bigr)\de_1=0=\bigl(\de_2\otimes
1\bigr)\de_1-\bigl(1\otimes\de_1\bigr)\de_2$. To prove that the map
$\bigl(\de_1\otimes 1\bigr)\de_2-\bigl(1\otimes\de_2\bigr)\de_1$ is
zero is enough to observe that the evaluation in $1$ is zero:
$$\begin{array}{rcl}
    \bigl(\de_1\otimes 1\bigr)\de_2(x)-\bigl(1\otimes\de_2\bigr)\de_1(x) & = & \bigl(\de_1\otimes 1\bigr)\de_2(1)\bigl(1\otimes
x\bigr)-\bigl(1\otimes\de_2\bigr)\de_1(1)\bigl(1\otimes x\bigr) \\
     & = & \bigl(\bigl(\de_1\otimes
     1\bigr)\de_2(1)-\bigl(1\otimes\de_2\bigr)\de_1(1)\bigr)\bigl(1\otimes x\bigr)=0.
  \end{array}
$$
And the last equation holds from
$$\bigl(\de_1\otimes 1\bigr)\de_2(1)=\sum_{i,j}c_{ij}\de_1(e_i)\ot e_j=\sum_{j,l,m}\left(\sum_{i,k}c_{ij}b_{kl}a_{ik}^m\right)e_m\otimes e_l\otimes e_j$$
and $$\begin{array}{rcl}
      \bigl(1\otimes
\de_2\bigr)\de_1(1) & = &
\displaystyle{\sum_{i,j}b_{ij}e_i\ot\de_2(e_j)=\sum_{i,l,m}\left(\sum_{j,k}b_{ij}c_{kl}a_{jk}^m\right)e_i\otimes
e_m\otimes e_l} \\
       & = &
       \displaystyle{\sum_{j,l,m}\left(\sum_{i,k}b_{mk}c_{ij}a_{ki}^l\right)e_m\ot
       e_l\ot e_j}\\
       & = &
       \displaystyle{\sum_{j,l,m}\left(\sum_{i}c_{ij}\left(\sum_kb_{mk}a_{ki}^l\right)\right)e_m\ot e_l\ot e_j}\\
       & = &
       \displaystyle{\sum_{j,l,m}\left(\sum_{i}c_{ij}\left(\sum_kb_{kl}a_{ik}^m\right)\right)e_m\ot e_l\ot e_j}\quad
       \mbox{using}\,(\ref{equ2}),\;\mbox{and}\;(\ref{equ3}).
    \end{array}
$$
\end{proof}

\begin{defn} The \emph{Frobenius space} associated to an algebra $A$ is the vector space of all the possible coproducts $\Delta$ that make it into a nearly Frobenius algebra ($\ma{E}$ from Theorem \ref{teo1}). Its dimension over $\k$ is called the \emph{Frobenius dimension} of $A$, that is,
$$\op{Frobdim\bigl(A\bigr)}=\op{dim}_\k\bigl(\ma{E}\bigr).$$
\end{defn}

\begin{example}
Every Frobenius algebra is also a nearly Frobenius algebra.
\end{example}

It is known that the truncated polynomial algebra is a Frobenius
algebra, in the next example we prove that this algebra admits
nearly Frobenius structures that do not come from Frobenius
structures.
\begin{example}\label{example1}
Let $A$ be the truncated polynomial algebra in one variable
$\k[x]/x^{n+1}$. We will determine all nearly Frobenius structures
on $A$, even more we will determine a basis of the Frobenius space
$\ma{E}$ of $A$.

We consider the canonical basis $B=\bigl\{1,x,\dots,x^n\bigr\}$ of
$A$.  Then the general expression of a $\k$-linear map $\de:A\rt
A\ot A$ in the value $1$ is
$$\de(1)=\sum_{i,j=1}^na_{ij}x^i\ot x^j.$$
This map is an $A$-bimodule morphism if
\begin{equation}\label{equation}\de\bigl(x^k\bigr)=\bigl(x^k\ot
1\bigr)\de(1)=\de(1)\bigl(1\ot
x^k\bigr),\quad\forall\;k\in\{0,\dots,n\}.\end{equation} The
equation (\ref{equation}) when $k=1$ is
$$\sum_{i,j=1}^na_{ij}x^{i+1}\ot x^j=\sum_{ij,=1}^na_{ij}x^i\ot x^{j+1}.$$
This happens if $a_{0j-1}=0$, $j=1,\dots,n$; $a_{i-10}=0$,
$i=1,\dots,n$ and $a_{ij-1}=a_{i-1j}$.  Then
$$\de(1)=\sum_{k=0}^na_{kn}\left(\sum_{i+j=n+k}x^i\ot x^j\right)$$
We denote $a_k=a_{kn}$. Applying the Remark \ref{lemma1} we need to
prove that $\de(x^k)=\bigl(x^k\ot 1\bigr)\de(1)=\de(1)\bigl(1\ot
x^k\bigr)$ to conclude  that $\de$ is an $A$-bimodule morphism.
$$\begin{array}{rcl}
    \de(1)\bigl(1\ot x^l\bigr) & = & \displaystyle{\sum_{k=0}^na_{k}\left(\sum_{i+j=n+k}x^i\ot x^j\right)\bigl(1\ot x^l\bigr)}
      =  \displaystyle{\sum_{k=0}^na_{k}\left(\sum_{i+j=n+k}x^i\ot x^{j+l}\right)} \\
     & = & \displaystyle{\sum_{k=0}^na_{k}\left(\sum_{i+m=n+k+l}x^i\ot x^m\right)}
      =  \displaystyle{\sum_{k=0}^na_{k}\left(\sum_{r+m=n+k}x^{r+l}\ot x^m\right)} \\
     & = & \displaystyle{\bigl(x^l\ot 1\bigr)\sum_{k=0}^na_{k}\left(\sum_{r+m=n+k}x^r\ot x^m\right)}
     =  \displaystyle{\bigl(x^l\ot 1\bigr)\de(1)}.
  \end{array}
$$
Finally, we need to check that this map is coassociative: Let
$x^l\in \ma{A}$ with $l\geq 0$.
$$
\begin{array}{rcl}
  \bigl(\de\ot 1\bigr)\bigl(\de\bigl(x^l\bigr)\bigr)  & = & \displaystyle{\bigl(\de\ot 1\bigr)\left(\sum_{k=0}^na_{k}\left(\sum_{i+j=n+k+l}x^i\ot x^j\right)\right)  = \sum_{k=0}^na_{k}\left(\sum_{i+j=n+k+l}\de\bigl(x^i\bigr)\ot x^j\right)} \\
   & = & \displaystyle{\sum_{k,m=0}^na_ka_m\left(\sum_{i+j=n+k+l}\sum_{r+s=n+m+i}x^r\ot x^s\ot x^j\right)} \\
   & = & \displaystyle{\sum_{k,m=0}^na_ka_m\left(\sum_{r+s+j=2n+m+k+l}x^r\ot x^s\ot x^j\right)} \\
  \bigl(1\ot\de\bigr)\bigl(\de\bigl(x^l\bigr)\bigr)  & = &  \displaystyle{\bigl(1\ot\de)\left(\sum_{k=0}^na_{k}\left(\sum_{i+j=n+k+l}x^i\ot x^j\right)\right)  = \sum_{k=0}^na_{k}\left(\sum_{i+j=n+k+l}x^i\ot \de\bigl(x^j\bigr)\right)}\\
  & = & \displaystyle{\sum_{k,m=0}^na_ka_m\left(\sum_{i+j=n+k+l}\sum_{r+s=n+m+j}x^i\ot x^r\ot x^s\right)} \\
  & = & \displaystyle{\sum_{k,m=0}^na_ka_m\left(\sum_{r+s+j=2n+m+k+l}x^r\ot x^s\ot x^j\right)}. \\
\end{array}
$$
Then the pair $\bigl(A,\de\bigr)$ is a nearly Frobenius algebra. In
particular we have that the coproduct $\de$ is a linear combination
of the coproducts $\de_k$ defined by
$$\de_k\bigl(x^l\bigr)=\sum_{i+j=n+k+l}x^i\ot x^j,\quad\mbox{for}\; k\in\{0,\dots,n\}$$
that is $\displaystyle{\de=\sum_{k=0}^n a_k\de_k}$ where $a_k\in\k$
for all $k\in\{1,\dots,n\}$. It is clear that the set of coproducts
$\de_k$ is a linearly independent set. Then
$$\ma{C}=\bigl\{\de_k:A\rt A\ot A, k\in\{0,1,\dots ,n\}\bigr\}$$
is a basis of $\ma{E}$, and $\op{Frobdim}(A)=n+1 (=\op{dim}_\k(A))$.

Note that $\de_0$ is the Frobenius coproduct of $A$ where the trace
map $\varepsilon:A\rt\k$ is given by
$\varepsilon\bigl(x^i\bigr)=\delta_{i,n}$ and it is the only
coproduct that admits a completion to Frobenius algebra structure.
This is because if we have a counit map $\varepsilon:A\rt\k$ then it
satisfies the counit axiom: $m(\varepsilon\ot
1)\bigl(\de_k\bigl(x^i\bigr)\bigr)=x^i$, $\forall i=0,1,\dots ,n$.
But
$$m(\varepsilon\ot 1)\bigl(\de_k\bigl(x^i\bigr)\bigr)=\sum_{j+l=n+k+i}\varepsilon\bigl(x^j\bigr)x^l$$ with $l>i$ so $m(\varepsilon\ot 1)\bigl(\de_k\bigl(x^i\bigr)\bigr)\neq x^i$ for $k\in\{1,\dots,n\}$.
\end{example}

\begin{example}
Let $A$ be the algebra  $\complex\bigl[\bigl[x,x^{-1}\bigr]\bigr]$
of formal Laurent series. Consider the coproducts given by:
$$\Delta_{j}\bigl(x^{i}\bigr)=\sum_{k+l=i+j} x^{k} \otimes x^{l}.$$
These coproducts define nearly Frobenius structures that do not come
from a Frobenius structure and $\op{Frobdim}(A)=\infty$.
\end{example}

\begin{example}
Let be $A$ the matrix algebra $M_{n\times n}(\k)$. We consider the
canonical basis of $A$, $\ma{B}=\bigl\{E_{ij}:\;i,j=1,\dots
,n\bigr\}$.

As in the example \ref{example1} we can prove that $M_{n\times
n}(\k)$ admits $n\times n$ independent coproducts,  these are
$\displaystyle{\de_{kl}\bigl(E_{ij}\bigr)=E_{ik}\ot E_{lj}}$, and a
general coproduct in $A$ is
$$\de\bigl(E_{ij}\bigr)=\sum_{k,l=1}^na_{kl}\de_{kl}\bigl(E_{ij}\bigr).$$
Then $\ma{C}=\bigl\{\de_{kl}: k,l\in\{1,\dots,n\}\bigr\}$ is a basis of $\ma{E}$ and $\op{Frobdim}(A)=n^2$.\\
The coproduct in the identity matrix is
$$\de(I)=\sum_{i=1}^n\de\bigl(E_{ii}\bigr)=\sum_{i=1}^n\sum_{k,l=1}^na_{kl}E_{ik}\otimes
E_{li}.$$ In the particular case that $a_{kl}=0$ if $k\neq l$ and
$a_{kk}=1$, for all $k\in\{1,\dots ,n\}$ we recover the Frobenius
coproduct
$$\de(I)=\sum_{i,k=1}^n E_{ik}\otimes E_{ki},$$ where the trace map $\var:M_{n\times n}(\k)\rightarrow\k$ is $\var(A)=\op{tr}(A)$.
\end{example}

\begin{example}\label{example2.5}
Let $G$ be a cyclic finite group. The group algebra $\k G$ is a
nearly Frobenius algebra. A basis of $\k G$ is
$\bigl\{g^i:\;i=1,\dots,n\bigr\}$ where $|G|=n$.

Using the bimodule condition of the coproduct we can prove that a
basis of the Frobenius space is
$$\ma{C}=\bigl\{\de_k:\k G\rt \k G\ot \k G: k\in{2,\dots, n}\bigr\}$$
where $\displaystyle{\de_k(1)=\sum_{i=1}^{k-1}g^i\ot
g^{k-i}+\sum_{i=k}^ng^i\ot g^{n+k-i}}$. Then we have that
$$\op{Frobdim}(A)=n-1 < \op{dim}(A)=n.$$

The general expression of any nearly Frobenius coproduct is
$$\de(1)=\sum_{k=2}^n a_k\left(\sum_{i=1}^{k-1}g^i\ot g^{k-i}+\sum_{i=k}^ng^i\ot g^{n+k-i}\right)$$
In the particular case that $a_i=0$, for $i\in\{2,\dots, n-1\}$ and
$a_n=1$ we have $$\de(1)=\sum_{k=1}^{n}g^k\ot g^{n-k}$$ the
Frobenius coproduct of the group algebra $A$ where the counit is
$\var\bigl(g^i\bigr)=\delta_{ni}$.
\end{example}

To complete the construction of the category of nearly Frobenius
algebras we need to define the morphisms of them.
\begin{defn}
Let $\bigl(A,\de_A\bigr)$ and $\bigl(B,\de_B\bigr)$ be nearly
Frobenius algebras. We say that $f:A\rt B$ is a \emph{morphism of
nearly Frobenius algebras} if it is a morphism of algebras and the
next diagram commutes
$$\xymatrix@C=1.5cm{A\ar[r]^{\de_A}\ar[d]_{f}& A\ot A\ar[d]^{f\ot f}\\
B\ar[r]_{\de_B}&B\ot B}.$$
\end{defn}


\section{Constructing nearly Frobenius structures}
\hspace{.5cm}In this section we show that known constructions, as
opposite algebra, direct sum, tensor product and quotient of nearly
Frobenius algebras admit natural nearly Frobenius structures. A
basic but important remark in this section is the following.  If $A$
and $B$ are isomorphic $\k$-algebras, such that $B$ is a nearly
Frobenius algebra, we can provide to $A$ with a nearly Frobenius
structure, where the coproduct is defined as
$$\de_A(a)=\bigl(\psi\ot \psi\bigr)\de_B\bigl(\varphi(a)\bigr),$$
with $\varphi:A\rt B$ and $\psi:B\rt A$ morphisms of $\k$-algebras
such that $\psi\circ \varphi=\op{Id}_A$ and
$\varphi\circ\psi=\op{Id}_B$.

\begin{thm}\label{theorem1}
\begin{enumerate}
  \item An algebra  $A$ is nearly Frobenius if and only if $A^{op}$ is a nearly Frobenius algebra.
  \item Let $A_1,\dots, A_n$ be  nearly Frobenius $\k$-algebras then $A=A_1\oplus\dots\oplus A_n$ is a nearly Frobenius $\k$-algebra.
  \item If $A$ and $B$ are nearly Frobenius $\k$-algebras, then $A\otimes_\k B$ also is.
\end{enumerate}
\end{thm}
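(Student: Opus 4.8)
The plan is to handle the three constructions separately, in each case writing down an explicit candidate coproduct and then checking the two defining axioms. Throughout I will rely on Remark \ref{lemma1}, which reduces the $A$-bimodule condition to the single relation $\de(x)=(x\ot 1)\de(1)=\de(1)(1\ot x)$. So for each construction it suffices to specify $\de(1)$, verify that this relation holds (which simultaneously shows the two bimodule squares commute), and then verify coassociativity, which — as in the proof of Theorem \ref{teo1} — can itself be reduced to an evaluation at $1$.

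For (1), let $\tau\colon A\ot A\rt A\ot A$ be the flip $\tau(a\ot b)=b\ot a$ and set $\de^{op}=\tau\circ\de$. Writing $\de(1)=\sum_i a_i\ot b_i$, the nearly Frobenius relation for $\de$ says $\sum_i xa_i\ot b_i=\sum_i a_i\ot b_ix$. Since passing to $A^{op}$ reverses the order of multiplication, the left and right module actions on $A^{op}\ot A^{op}$ are interchanged relative to $A\ot A$; applying $\tau$ is exactly what reconciles this, so with $\de^{op}(1)=\sum_i b_i\ot a_i$ a short computation gives $\de^{op}(x)=(x\ot 1)\de^{op}(1)=\de^{op}(1)(1\ot x)$ in $A^{op}\ot A^{op}$. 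Coassociativity of $\de^{op}$ then follows from a direct computation of $(\de^{op}\ot 1)\de^{op}(1)$ and $(1\ot\de^{op})\de^{op}(1)$, which agree after relabelling the summation indices. The ``if and only if'' is immediate, since $(A^{op})^{op}=A$.

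For (2) I will use the orthogonal central idempotents $e_1,\dots,e_n$, where $e_i$ is the unit of $A_i$ and $1=\sum_i e_i$. Identifying each $A_i\ot A_i$ with the diagonal summand inside $A\ot A=\bigoplus_{i,j}A_i\ot A_j$, set $\de=\bigoplus_i\de_i$, i.e. $\de(1)=\sum_i\de_i(1_i)$. Because the product on $A$ annihilates the cross terms $A_i\ot A_j$ with $i\neq j$, evaluating $(x\ot 1)\de(1)$ for $x=(x_1,\dots,x_n)$ collapses to $\sum_i\de_i(x_i)=\de(x)$, which yields the bimodule relation; the right-hand version and coassociativity are inherited componentwise from each $\de_i$. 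It is enough to treat $n=2$ and induct, but the general case is no harder.

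For (3), set $\de_{A\ot B}=(1\ot\tau\ot 1)\circ(\de_A\ot\de_B)$, where the middle flip $\tau\colon A\ot B\rt B\ot A$ rearranges $A\ot A\ot B\ot B$ into $(A\ot B)\ot(A\ot B)$. With $\de_A(1)=\sum_i a_i\ot a_i'$ and $\de_B(1)=\sum_j b_j\ot b_j'$ one obtains $\de_{A\ot B}(1\ot 1)=\sum_{i,j}(a_i\ot b_j)\ot(a_i'\ot b_j')$, and since multiplication in $A\otimes_\k B$ is componentwise, the bimodule relation for $\de_{A\ot B}$ factors into the bimodule relations for $\de_A$ and $\de_B$ separately. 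The one genuinely fiddly step, which I expect to be the main obstacle, is coassociativity of $\de_{A\ot B}$: this requires rearranging the six factors of $A^{\ot 3}\ot B^{\ot 3}$ and checking that the interleaved flip maps compose consistently, so that $(\de_{A\ot B}\ot 1)\de_{A\ot B}$ and $(1\ot\de_{A\ot B})\de_{A\ot B}$ agree once the coassociativity of $\de_A$ and $\de_B$ is transported through the shuffle. This is bookkeeping rather than a conceptual difficulty — it amounts to the fact that a tensor product of coalgebra-like data in a symmetric monoidal setting is again such — but it is the place where care with the flips is needed.
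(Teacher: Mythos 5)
Your proposal is correct and takes essentially the same route as the paper: you construct the identical coproducts in all three cases ($\tau\circ\de$ on $A^{op}$, the componentwise coproduct $\bigoplus_i\de_i$ landing in the diagonal summands $A_i\ot A_i$ for the direct sum, and $(1\ot\tau\ot 1)\circ(\de_A\ot\de_B)$ for the tensor product). The only difference is one of bookkeeping style — you uniformly reduce the bimodule axiom to evaluation at the unit, while the paper checks it by a direct Sweedler computation for $A^{op}$ and by commutative diagrams built from the canonical injections and projections for the direct sum — and both verifications amount to the same calculations.
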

\begin{proof}
\begin{enumerate}
  \item 
      We define the coproduct $\de^{op}:A^{op}\rt A^{op}\ot A^{op}$ as $\tau\circ\de$, where $\de$ is the coproduct in $A$ and $\tau$ is the twist, that is $\tau(a\ot b)=b\ot a$. It is clear that $\de^{op}$ is coassociative because $\de$ is coassociative. We need to check that $\de^{op}$ is morphism of $A^{op}$-bimodule.
      $$\de^{op}(a\ast b)  =  \de^{op}(ba)  =  \tau\bigl(\de(ba)\bigr)  =  \sum a_2\ot ba_1  =  \sum a_2\ot a_1\ast b  =  \bigl(1\ot\ast\bigr)\bigl(\de^{op}(a)\ot b\bigr)$$
      $$\de^{op}(a\ast b)  =  \de^{op}(ba) =  \tau\bigl(\de(ba)\bigr)  =  \sum b_2a\ot b_1=\sum a\ast b_2\ot b_1 =\bigl(\ast\ot 1\bigr)\bigl(a\ot\de^{op}(b)\bigr) $$
  \item Let $A_1,\dots, A_n$ be nearly Frobenius algebras with $\de_1\dots,\de_n$ are the associated coproducts. We consider the canonical injections
 $\displaystyle{q_i:A_i\rt \bigoplus_{i=1}^nA_i}$. By the universal property of the direct sum in $\op{Vect}_\k$, there exists a unique morphism $\de$ in $\op{Vect}_\k$ such that the diagram
  $$\xymatrix@C=1.3cm@R=1cm{
  A_j\ar[r]^{q_j}\ar[d]_{\de_j}&\displaystyle{\bigoplus_{i=1}^nA_i}\ar[d]^{\de}\\
  A_j\ot A_j\ar[r]_(.3){q_j\ot q_j}&\displaystyle{\bigl(\bigoplus_{i=1}^nA_i\bigr)\otimes\bigl(\bigoplus_{i=1}^nA_i\bigr)}
  }$$ commutes.

  The coassociativity is a consequence of the commutativity of the cube
$$\xymatrix@C=1cm@R=1cm{
& A\ot A\ot A& & A\ot A\ar[ll]_(.5){\de\ot 1} \\
A_i\ot A_i\ot A_i\ar[ur]^{q_i\ot q_i\ot q_i}& & A_i\ot A_i\ar[ll]_(.4){\de_i\ot 1}\ar[ur]^{q_i\ot q_i}&  \\
& A\ot A\ar@{->}'[u][uu]^(.3){1\ot\de} & & A\ar@{->}'[l][ll]_(.3){\de}\ar[uu]^{\de} \\
A_i\ot A_i\ar[uu]^{1\ot\de_i}\ar[ur]_{q_i\ot q_i}& &
A_i\ar[uu]^(.6){\de_i}\ar[ll]^{\de_i}\ar[ur]_{q_i}&
  }$$
To prove the Frobenius identities, first we note that the diagram
$$\xymatrix{
A\ar[r]^{p_i}\ar[d]_{\de}& A_i\ar[d]^{\de_i}\\
A\ot A\ar[r]_{p_i\ot p_i}& A_i\ot A_i }$$ commutes, where
$\displaystyle{p_j:\bigoplus_{i=1}^n A_i\rt A_j}$ is the canonical
projection. This implies that the next cube commutes.
 $$\xymatrix@C=1cm@R=1cm{
& A\ot A\ar[rr]^{m}\ar[dl]_{p_i\ot p_i}\ar@{->}'[d][dd]_(.6){\de\ot 1}& & A\ar[dd]_{\de}\ar[dl]^{p_i} \\
A_i\ot A_i\ar[rr]^(.6){m_i}\ar[dd]^{\de_i\ot 1}& & A_i\ar[dd]_(.4){\de_i}&  \\
& A\ot A\ot A\ar@{->}'[r][rr]^(.3){1\ot m}\ar[dl]_{p_i\ot p_i\ot p_i} & & A\ot A\ar[dl]_(.5){p_i\ot p_i} \\
A_i\ot A_i\ot A_i\ar[rr]^{1\ot m_i}& & A_i\ot A_i &
  }$$ Then $(A,\de)$ is a nearly Frobenius algebra.
\item We can define the coproduct $\de:A\otimes B\rt\bigl(A\ot B\bigr)\ot\bigl(A\ot B\bigr)$ as
$$\de=(1\ot\tau\ot 1)\circ\bigl(\de_1\ot\de_2\bigr),\quad\mbox{where}\; \tau\;\mbox{is the twist.}$$
Using that $\de_1$ and $\de_2$ are the coproducts of $A$ and $B$
respectively we conclude that $\bigl(A\ot B,\de\bigr)$ is a nearly
Frobenius algebra.
\end{enumerate}
\end{proof}

\begin{cor}
Let $G$ be a finite group. If $char(\k)$ does not divide the order
of $G$, then $\k G$ is a nearly Frobenius algebra.
\end{cor}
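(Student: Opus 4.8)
The plan is to reduce the assertion to building blocks already established in the paper: the computation that a matrix algebra is nearly Frobenius, the direct-sum construction of Theorem \ref{theorem1}, and the transport of a nearly Frobenius structure along an algebra isomorphism recorded at the opening of this section.

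First I would appeal to Maschke's theorem. Since the characteristic of $\k$ does not divide $|G|$, the group algebra $\k G$ is semisimple, and the Artin--Wedderburn theorem furnishes an isomorphism of $\k$-algebras
$$\k G \;\cong\; \bigoplus_{i=1}^{r} M_{n_i}(D_i),$$
where each $D_i$ is a finite-dimensional division algebra over $\k$.

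Next I would show that each simple factor $M_{n_i}(D_i)$ is nearly Frobenius and then glue. When $\k$ is algebraically closed each $D_i$ is $\k$ itself, so every factor is of the form $M_{n_i}(\k)$ and is nearly Frobenius by the matrix-algebra example computed above. Feeding the finite family $\{M_{n_i}(D_i)\}_{i=1}^r$ into Theorem \ref{theorem1}(2) then equips $\bigoplus_i M_{n_i}(D_i)$ with a nearly Frobenius coproduct $\de$.

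Finally I would transport this coproduct back to $\k G$ using the remark that opens the section: writing $\varphi:\k G\rt \bigoplus_i M_{n_i}(D_i)$ for the isomorphism of the first step and $\psi$ for its inverse, the map $(\psi\ot\psi)\circ\de\circ\varphi$ makes $\k G$ into a nearly Frobenius algebra, which is the claim. The only delicate point is the presence of the division algebras $D_i$ when $\k$ is not algebraically closed, since only $M_{n}(\k)$ was treated explicitly; this is resolved by recalling that a matrix algebra over a finite-dimensional division algebra is a Frobenius algebra and that, by the earlier example, every Frobenius algebra is nearly Frobenius, so the assembly proceeds verbatim over an arbitrary field.
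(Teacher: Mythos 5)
Your proof is correct and follows essentially the same route as the paper: Maschke's theorem, the Wedderburn decomposition into simple factors, the matrix-algebra example, the direct-sum construction of Theorem \ref{theorem1}, and transport of the structure along the algebra isomorphism. In fact you are more careful than the paper, whose own proof simply asserts that the simple factors are $M_{n_i\times n_i}(\k)$ --- which is only guaranteed when $\k$ is a splitting field for $G$ (e.g.\ algebraically closed) --- whereas you correctly note that in general the factors are $M_{n_i}(D_i)$ for division algebras $D_i$ and dispose of them by observing that such algebras are Frobenius, hence nearly Frobenius.
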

\begin{proof}
Applying Maschke's theorem we have that $\k G$ is semisimple, then
it is a direct sum of simple algebras $M_{n_i\times n_i}(\k)$.
Therefore, by the Theorem \ref{theorem1}, we conclude that $\k G$
is a nearly Frobenius algebra. 
\end{proof}

\begin{cor}
If $G$ is a finite abelian group. Then $\k G$ is a nearly Frobenius
algebra.
\end{cor}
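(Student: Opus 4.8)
The plan is to reduce the general finite abelian case to the cyclic case already settled in Example \ref{example2.5}, using the tensor-product construction of Theorem \ref{theorem1}. By the fundamental theorem of finite abelian groups, $G$ decomposes as a direct product of cyclic groups, $G \cong C_{n_1} \times \cdots \times C_{n_r}$. The first step is to record the standard algebra isomorphism
$$\k G \cong \k C_{n_1} \otimes_\k \cdots \otimes_\k \k C_{n_r},$$
which follows by iterating the isomorphism $\k[H \times K] \cong \k H \otimes_\k \k K$ for group algebras of finite groups; this identification holds over any field, with the basis element $(g,h)$ corresponding to $g \otimes h$ and multiplication carried out componentwise.

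Next I would invoke Example \ref{example2.5}: each cyclic factor $\k C_{n_i}$ carries a nearly Frobenius coproduct. Applying part (3) of Theorem \ref{theorem1} and inducting on $r$, the iterated tensor product $\k C_{n_1} \otimes_\k \cdots \otimes_\k \k C_{n_r}$ is a nearly Frobenius algebra. Finally, since this tensor product is isomorphic to $\k G$ as a $\k$-algebra, the remark opening this section---that a nearly Frobenius structure transports along any algebra isomorphism---equips $\k G$ itself with a nearly Frobenius coproduct, completing the proof.

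The argument has essentially no hard step: the fundamental theorem and the group-algebra isomorphism are classical, and closure of nearly Frobenius algebras under tensor products is exactly Theorem \ref{theorem1}(3). The only point worth emphasizing is that this route is characteristic-free, and so genuinely extends the previous corollary. When $\mathrm{char}(\k)$ divides $|G|$ the algebra $\k G$ need not be semisimple, so the Maschke-based argument no longer applies; yet the cyclic building blocks of Example \ref{example2.5} remain nearly Frobenius regardless of the characteristic, which is precisely why the tensor-product approach succeeds in full generality.
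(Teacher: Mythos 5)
Your proposal is correct and follows essentially the same route as the paper's own proof: decompose $G$ into cyclic factors via the fundamental theorem, identify $\k G$ with the tensor product of the cyclic group algebras, and then combine Example \ref{example2.5} with Theorem \ref{theorem1}(3). The extra details you supply (the explicit induction, the transport of the coproduct along the algebra isomorphism, and the observation that the argument is characteristic-free) are consistent with, and merely flesh out, the paper's argument.
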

\begin{proof}
If $G$ is a finite abelian group, then, by the fundamental theorem
of finite abelian groups, $G=G_1\oplus\dots\oplus G_p$, where $G_i$
is a finite cyclic group for $i\in\{1,\dots,p\}$. Therefore, the
group algebra $\k G$ of $G$ is isomorphic, as a $\k$-algebra, to $\k
G_1\ot\dots\ot\k G_p$. Finally, applying the Example
\ref{example2.5} and the Theorem \ref{theorem1} we conclude that $\k
G$ is a nearly Frobenius algebra.
\end{proof}

\begin{defn}
Let  $\bigl(A, \de\bigr)$ be a nearly Frobenius algebra. A linear
subspace $J$ in $A$ is called a \emph{nearly Frobenius ideal} if
\begin{enumerate}
\item[(a)]$J$ is an ideal of $A$ and
\item[(b)] $\de(J)\subset J\ot A + A\ot J$.
\end{enumerate}
\end{defn}
Note that, if $A$ is a bialgebra, i.e. we have a trace map
$\var:A\rt\k$, the additional condition $\var(J)=0$ implies that $J$
is a bi-ideal of $A$.
\begin{example}
Go back to Example \ref{example1}. We observe that the ideal
$J=\bigl\langle x\bigr\rangle$ is a nearly Frobenius ideal if we
consider the coproduct $\de_1$. Because
$$\de_1(x)=\sum_{i+j=n+2}x^i\ot x^j=x^2\ot x^n+x^3\ot x^{n-1}+\cdots +x^n\ot x^2\in J\ot J\subset J\ot A+A\ot J.$$
\end{example}

\begin{prop}\label{p1}
Let $\bigl(A,\de\bigr)$ be a nearly Frobenius algebra, $J$ a nearly
Frobenius ideal. Then $A/J$ admits a unique nearly Frobenius
structure such that $p:A\rightarrow A/J$ is a nearly Frobenius
morphism.
\end{prop}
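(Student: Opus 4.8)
The plan is to define the coproduct on the quotient by pushing $\de$ through the projection $p\colon A\to A/J$ and show this is well-defined, then verify that the resulting map inherits coassociativity, the bimodule property, and makes $p$ a morphism of nearly Frobenius algebras. Concretely, I would set $\de_{A/J}\colon A/J\to (A/J)\ot(A/J)$ to be the unique map satisfying $\de_{A/J}\circ p=(p\ot p)\circ\de$. The existence of such a map is exactly the universal property of the quotient in $\op{Vect}_\k$: since $p$ is surjective, $\de_{A/J}$ is forced to be $\de_{A/J}(\bar a)=(p\ot p)(\de(a))$, and the only thing requiring proof is that this does not depend on the choice of representative $a$.

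The main obstacle, and really the heart of the argument, is this well-definedness. If $\bar a=\bar a'$, then $a-a'\in J$, so I must check that $(p\ot p)(\de(a-a'))=0$, i.e. that $(p\ot p)(\de(J))=0$. This is precisely where condition (b) in the definition of a nearly Frobenius ideal enters: $\de(J)\subset J\ot A+A\ot J$, and since $p(J)=0$ we have $(p\ot p)(J\ot A)=0$ and $(p\ot p)(A\ot J)=0$, hence $(p\ot p)(\de(J))=0$. Thus $\de_{A/J}$ is well-defined. I would note here that condition (a), that $J$ is an ordinary two-sided ideal, guarantees that $A/J$ is an algebra in the first place, so the statement even makes sense.

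Having produced $\de_{A/J}$, the remaining verifications are routine diagram-chases that transfer the structure along the surjection $p\ot p$ (resp. $p\ot p\ot p$). For coassociativity, both $(\de_{A/J}\ot 1)\de_{A/J}$ and $(1\ot\de_{A/J})\de_{A/J}$, precomposed with the surjection $p$, agree with $(p\ot p\ot p)$ applied to the two sides of the coassociativity diagram for $\de$; since these coincide in $A$ and $p$ is surjective, the two maps on $A/J$ coincide. The bimodule identities follow identically: multiplication in $A/J$ satisfies $\bar m\circ(p\ot p)=p\circ m$, so each Frobenius square for $A/J$ is obtained by applying the surjection to the corresponding square for $A$, and surjectivity of $p$ lets me cancel it. Finally, uniqueness is immediate: any nearly Frobenius structure $\de'$ on $A/J$ making $p$ a morphism must satisfy $\de'\circ p=(p\ot p)\circ\de$, and since $p$ is surjective this determines $\de'$ completely, forcing $\de'=\de_{A/J}$.
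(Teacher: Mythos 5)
Your proposal is correct and follows essentially the same route as the paper: define $\overline{\Delta}$ via the universal property of the quotient, use condition (b) of the nearly Frobenius ideal to get $(p\otimes p)\Delta(J)=0$ for well-definedness, and transfer coassociativity and the bimodule identities along the surjection. The only difference is presentational---the paper verifies the bimodule property by an explicit Sweedler-notation computation where you argue by a diagram chase using surjectivity of $p\otimes p$, and you state the uniqueness argument slightly more explicitly than the paper does.
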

\begin{proof}
Since $(p\otimes p)\Delta(J)\subset(p\otimes p)\bigl(J\otimes
A+A\otimes J\bigr)=0$, by the universal property of the quotient
vector space it follows that there exists a unique morphism of
vector spaces $$\overline{\Delta}:A/J\rightarrow A/J\otimes A/J$$
for which the diagram
 $$\xymatrix{A\ar[r]^p\ar[d]_{\Delta}&A/J\ar[d]^{\overline{\Delta}}\\
A \ar[r]_(.3){p\otimes p}& A/J\otimes A/J}$$ is commutative. This
map is defined by
$\displaystyle{\overline{\Delta}(\overline{a})=\sum
\overline{a_1}\otimes\overline{a_2}}$ where $\overline{a}=p(a)$ and
$\displaystyle{\de(a)=\sum a_1\ot a_2}$, i.e.
$\overline{\Delta}=(p\otimes p)\circ \Delta$.

The fact that   $\bigl(\overline{\Delta}\otimes
1\bigr)\overline{\Delta}(\overline{a})=\bigl(1\otimes\overline{\Delta}\bigr)\overline{\Delta}(\overline{a})=\sum\overline{a_1}\otimes\overline{a_2}\otimes\overline{a_3}$
follows immediately from the commutativity of the diagram.

The last step is to prove that the coproduct is a bimodule morphism:
$$\xymatrix{A/J\otimes A/J\ar[r]^{\overline{m}}\ar[d]_{\overline{\Delta}\otimes 1}&A/J\ar[d]^{\overline{\Delta}}\\
A/J\otimes A/J\otimes A/J\ar[r]_(.6){1\otimes\overline{m}}&
A/J\otimes A/J}\quad
\xymatrix{A/J\otimes A/J\ar[r]^{\overline{m}}\ar[d]_{1\otimes\overline{\Delta}}&A/J\ar[d]^{\overline{\Delta}}\\
A/J\otimes A/J\otimes A/J\ar[r]_(.6){\overline{m}\otimes 1}&
A/J\otimes A/J}$$
$$\begin{array}{rclcl}
  \displaystyle{\overline{\Delta}\overline{m}(\overline{a}\otimes\overline{b})} & = & \displaystyle{\overline{\Delta}(p(ab))=(p\otimes
p)\Delta(ab)} & = & \displaystyle{(p\otimes p)\bigl((1\otimes m)(\Delta\otimes 1)(a\otimes b)\bigr)}\\
  & = &  \displaystyle{(p\otimes p)
  \left(\sum a_1\otimes a_2b\right)} & = & \displaystyle{\sum\overline{a_1}\otimes\overline{a_2b}}\\
 \displaystyle{(1\otimes\overline{m})(\overline{\Delta}\otimes
1)(\overline{a}\otimes\overline{b})}  & = &
\displaystyle{(1\otimes\overline{m})\left(\sum\overline{a_1}\otimes\overline{a_2}\otimes\overline{b}\right)}
& = & \displaystyle{\sum\overline{a_1}\otimes\overline{a_2b}}.
\end{array}$$
\end{proof}

\begin{example}
Consider the linear quiver
$$\includegraphics{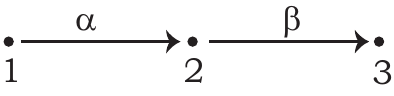}$$
and the associated path algebra
$$A=\bigl\langle e_1, e_2, e_3, \alpha, \beta, \alpha\beta\bigr\rangle$$

This algebra admits a unique nearly Frobenius coproduct:
$$\Delta(e_1)=\alpha\beta\otimes e_1,\quad \Delta(e_2)=\beta\otimes\alpha,\quad \Delta(e_3)= e_3\otimes \alpha\beta,$$
$$\Delta(\alpha)=\alpha\beta\otimes\alpha,\quad \Delta(\beta)=\beta\otimes\alpha\beta,\quad \Delta(\alpha\beta)=\alpha\beta\otimes\alpha\beta,$$
in particular $\op{Frobdim}(A)=1$.

Now, let be $J=\langle\al\be\rangle$. Note that $J$ is a nearly
Frobenius ideal:
$$\Delta(\alpha\beta)=\alpha\beta\otimes\alpha\beta\in A\ot
J+J\ot A.$$

Then, applying the Proposition \ref{p1}, $B=A/J=\bigl\langle
\overline{e_1}, \overline{e_2}, \overline{e_3}, \overline{\alpha},
\overline{\beta}\bigr\rangle$ admits a nearly Frobenius structure
defined by
$$\overline{\de}:B\rightarrow B\ot B$$
$$\overline{\de}\bigl(\overline{e_1}\bigr)=\overline{\de}\bigl(\overline{e_3}\bigr)=\overline{\de}\bigl(\overline{\al}\bigr)=\overline{\de}\bigl(\overline{\be}\bigr)=0\; \mbox{and}\; \overline{\de}\bigl(\overline{e_2}\bigr)=\overline{\be}\ot\overline{\al}.$$

Note that the algebra $B$ is associated to the quiver
$$\includegraphics{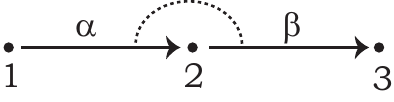}$$
where the dashed line represents the relation $\al\be=0$, admits
three independent coproducts, in fact $\op{Frobdim}(B)=3$:
$$\de_1\bigl(\overline{e_1}\bigr)=\overline{\alpha}\otimes \overline{e_1},\quad \de_1\bigl(\overline{e_2}\bigr)=\overline{e_2}\otimes
\overline{\alpha},\quad
\de_1\bigl(\overline{e_3}\bigr)=\de_1(\overline{\be})=0,\quad
\de_1\bigl(\overline{\al}\bigr)=\overline{\al}\ot\overline{\al}$$
$$\de_2\bigl(\overline{e_1}\bigr)=\de_2\bigl(\overline{\al}\bigr)=0,\quad \de_2\bigl(\overline{e_2}\bigr)=\overline{\be}\ot\overline{e_2},
\quad\de_2\bigl(\overline{e_3}\bigr)=\overline{e_3}\ot\overline{\be},\quad\de_2\bigl(\overline{\be}\bigr)=\overline{\be}\ot\overline{\be}$$
$$\de_3\bigl(\overline{e_1}\bigr)=\de_3\bigl(\overline{e_3}\bigr)=\de_3\bigl(\overline{\al}\bigr)=\de_3\bigl(\overline{\be}\bigr)=0,\quad \de_3\bigl(\overline{e_2}\bigr)=\overline{\be}\ot\overline{\al}.$$
Observe that $\overline{\de}$ coincides with $\de_3$.
\end{example}

\begin{thm}\label{teo1}
Let $A$, $B$ and $C$ nearly Frobenius algebras. Given two
epimorphisms of nearly Frobenius algebras $f_A:A\twoheadrightarrow
C$ and $f_B:B\twoheadrightarrow C$ the pullback $R$ of $f_A$ and
$f_B$
$$R=\bigl\{\bigl(a,b\bigr)\in A\times B: f_A\bigl(a\bigr)=f_B\bigl(b\bigr)\bigr\}$$ is a nearly Frobenius algebra.
\end{thm}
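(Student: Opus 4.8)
The plan is to realize $R$ as a subalgebra of the product algebra $A\times B$ and then to transport the coproducts of $A$ and $B$ onto it. That $R$ is a subalgebra is immediate: if $(a,b),(a',b')\in R$ then $f_A(aa')=f_A(a)f_A(a')=f_B(b)f_B(b')=f_B(bb')$, and $f_A(1_A)=1_C=f_B(1_B)$, so $R$ is closed under the componentwise product and contains the unit $1_R=(1_A,1_B)$. Moreover the restrictions of the two canonical projections, $\pi_A\colon R\to A$ and $\pi_B\colon R\to B$, are surjective algebra morphisms (surjectivity uses that $f_A,f_B$ are onto) and they satisfy $f_A\circ\pi_A=f_B\circ\pi_B$.

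For the coproduct I would try to make $\pi_A$ and $\pi_B$ into morphisms of nearly Frobenius algebras. By Remark~\ref{lemma1} any candidate $\Delta_R$ is determined by the single element $\omega_R:=\Delta_R(1_R)\in R\otimes R$ via $\Delta_R(r)=(r\otimes 1_R)\,\omega_R$, so the construction reduces to producing a suitable $\omega_R$. Writing $\omega_A:=\Delta_A(1_A)$ and $\omega_B:=\Delta_B(1_B)$, the requirement that $\pi_A,\pi_B$ be nearly Frobenius morphisms is exactly $(\pi_A\otimes\pi_A)\omega_R=\omega_A$ and $(\pi_B\otimes\pi_B)\omega_R=\omega_B$. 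The hypothesis that $f_A$ and $f_B$ are morphisms of nearly Frobenius algebras onto the same $C$ is what makes these two demands consistent: evaluating the morphism condition at the unit gives $(f_A\otimes f_A)\omega_A=\Delta_C(1_C)=(f_B\otimes f_B)\omega_B$, so $(\omega_A,\omega_B)$ is a pair compatible over $C\otimes C$. Using $\k$-linear sections of $f_A$ and $f_B$ I would lift this compatible pair to an element $\omega_R\in R\otimes R$ projecting to $\omega_A$ and to $\omega_B$.

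It then remains to check the two nearly Frobenius axioms for $\Delta_R(r)=(r\otimes 1_R)\omega_R$. The intended strategy is to reduce each identity (coassociativity and the two bimodule squares) to its images under $\pi_A\otimes\pi_A$ and $\pi_B\otimes\pi_B$, where it becomes the already-known identity for $\Delta_A$ and for $\Delta_B$, using that $\Delta_A,\Delta_B$ are themselves balanced and determined on the unit.

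The step I expect to be the main obstacle is precisely the construction of $\omega_R$, because this reduction to projections is not automatic. An element of $R\otimes R$ is \emph{not} determined by its images under $\pi_A\otimes\pi_A$ and $\pi_B\otimes\pi_B$: here $R\otimes R$ is strictly larger than the fibre product $(A\otimes A)\times_{C\otimes C}(B\otimes B)$, the discrepancy being the ``cross'' terms annihilated by both projections. Consequently one must control these cross terms so that the lift genuinely lies in $R\otimes R$ and is \emph{simultaneously} balanced and coassociative, rather than merely balanced on each projection. It is exactly here that the compatibility $(f_A\otimes f_A)\omega_A=(f_B\otimes f_B)\omega_B$ coming from the hypothesis that $f_A,f_B$ are nearly Frobenius epimorphisms is indispensable, and verifying that a consistent choice of cross terms exists is the delicate heart of the argument.
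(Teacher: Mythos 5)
Your overall strategy---realize $R$ as a subalgebra of $A\times B$ and define a coproduct making the projections $\pi_A,\pi_B$ into morphisms of nearly Frobenius algebras---is the same as the paper's, and your reduction of that requirement to finding $\omega_R\in R\otimes R$ with $(\pi_A\otimes\pi_A)\omega_R=\omega_A$ and $(\pi_B\otimes\pi_B)\omega_R=\omega_B$ is correct. But the proposal has a genuine gap, and you flag it yourself: $\omega_R$ is never constructed. The argument via $\k$-linear sections and the compatibility $(f_A\otimes f_A)\omega_A=(f_B\otimes f_B)\omega_B$ only produces \emph{some} preimage of the pair $(\omega_A,\omega_B)$; it gives no control over the cross terms, hence no reason why the chosen lift should satisfy the balance condition $(r\otimes 1_R)\omega_R=\omega_R(1_R\otimes r)$ for all $r\in R$, nor the unit-level coassociativity $(\Delta_R\otimes 1)\omega_R=(1\otimes\Delta_R)\omega_R$. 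As you correctly observe, $\pi_A\otimes\pi_A$ and $\pi_B\otimes\pi_B$ are not jointly injective on $R\otimes R$, so these identities cannot be verified by projecting to $A$ and $B$; they are exactly the content of the theorem, and the proposal stops just before proving them. A plan that ends by declaring the key step ``the delicate heart of the argument'' is not yet a proof.

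For comparison, the paper does not lift $(\omega_A,\omega_B)$ abstractly; it defines the coproduct elementwise in Sweedler notation, $\Delta_R\bigl((a,b)\bigr)=\sum (a_1,b_1)\otimes(a_2,b_2)$ where $\Delta_A(a)=\sum a_1\otimes a_2$ and $\Delta_B(b)=\sum b_1\otimes b_2$, then uses the hypothesis that $f_A,f_B$ are nearly Frobenius morphisms to argue that $\Delta_R\bigl((a,b)\bigr)$ lands in $R\otimes R$, and finally checks coassociativity and the two bimodule squares componentwise, where they follow from the corresponding identities for $\Delta_A$ and $\Delta_B$. You should note, however, that this formula presupposes a compatible pairing of the Sweedler terms of $\Delta_A(a)$ and $\Delta_B(b)$---which is precisely the cross-term ambiguity you isolated---so your diagnosis of where the real difficulty sits is accurate, and the paper's treatment of that point (in particular the passage from $\sum f_A(a_1)\otimes f_A(a_2)=\sum f_B(b_1)\otimes f_B(b_2)$ to $\sum f_A(a_1)=\sum f_B(b_1)$ and $\sum f_A(a_2)=\sum f_B(b_2)$) is itself loose. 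Still, to turn your proposal into a proof you would have to either justify such a compatible pairing or exhibit a concrete $\omega_R$ and verify the balance and coassociativity conditions directly; neither is done.
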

\begin{proof}
The pullback $R$ is a subalgebra of $A\times B$, then the product is
defined by $$\bigl(a,b\bigr)\cdot\bigl(c,d\bigr)=\bigl(ac,bd\bigr)$$
for all $\bigl(a,b\bigr),\bigl(c,d\bigr)\in R$. Note that
$\bigl(ac,bd\bigr)\in R$ because
$$f_A\bigl(ac\bigr)=f_A\bigl(a\bigr)f_A\bigl(c\bigr)=f_B\bigl(b\bigr)f_B\bigl(d\bigr)=f_B\bigl(bd\bigr).$$
As $A$ and $B$ are nearly Frobenius algebras there exist $\de_A:A\rt
A\ot A$ and $\de_B:B\rt B\ot B$ coproducts. Then, we define
$$\de_R:R\rt R\ot R$$ as $\de_R\bigl((a,b)\bigr)=\sum\bigl(a_1,b_1\bigr)\ot\bigl(a_2,b_2\bigr)$, where
$\de_A\bigl(a\bigr)=\sum a_1\ot a_2$ and $\de_B\bigl(b\bigr)=\sum
b_1\ot b_2.$

First, we check that the map $\de_R$ is well defined, that is
$\de_R\bigl((a,b)\bigr)\in R\ot R$ for $(a,b)\in R$. As the maps
$f_A$ and $f_B$ are morphisms of nearly Frobenius algebras the next
diagrams commute
$$\xymatrix@C=1.5cm{A\ar[r]^{\de_A}\ar[d]_{f_A}& A\ot A\ar[d]^{f_A\ot f_A}\\
C\ar[r]_{\de_C}&C\ot C}\quad
\xymatrix@C=1.5cm{B\ar[r]^{\de_B}\ar[d]_{f_B}&B\ot B\ar[d]^{f_B\ot
f_B}\\C\ar[r]_{\de_C}&C\ot C},$$ then $\bigl(f_A\ot
f_A\bigr)\de_A(a)=\de_C\bigl(f_A(a)\bigr)=\de_C\bigl(f_B(b)\bigr)=\bigl(f_B\ot
f_B\bigr)\de_B(b)$. Using this we have
$$\begin{array}{rcl}
   \bigl(f_A\ot f_A\bigr)\bigl(\sum a_1\ot a_2\bigr)  & = & \bigl(f_B\ot f_B\bigr)\bigl(\sum b_1\ot b_2\bigr) \\
     & \Rightarrow &  \\
    \sum f_A\bigl(a_1\bigr)\ot f_A\bigl(a_2\bigr) & = & \sum f_B\bigl(b_1\bigr)\ot f_B\bigl(b_2\bigr) \\
     & \Rightarrow &  \\
    \left(\sum f_A\bigl(a_1\bigr)\right)\ot\left(\sum f_A\bigl(a_2\bigr)\right) & = & \left(\sum f_B\bigl(b_1\bigr)\right)\ot\left(\sum f_B\bigl(b_2\bigr)\right).
  \end{array}
$$
Then $\sum f_A\bigl(a_1\bigr)=\sum f_B\bigl(b_1\bigr)$ and $\sum
f_A\bigl(a_2\bigr)=\sum f_B\bigl(b_2\bigr)$ and $$f_A\left(\sum
a_1\right)=f_B\left(\sum b_1\right)\quad\mbox{and}\quad
f_A\left(\sum a_2\right)=f_B\left(\sum b_2\right)$$ therefore
$$\de_R\bigl((a,b)\bigr)=\sum\bigl(a_1,b_1\bigr)\ot\sum\bigl(a_2,b_2\bigr)\in
R\ot R.$$
\begin{enumerate}
\item Coassociativity of $\de_ R$: $\bigl(\de_R\ot 1\bigr)\de_R\bigl((a,b)\bigr)=\bigl(1\ot\de_R\bigr)\de_R\bigl((a,b)\bigr)$.
$$\begin{array}{rcl}
    \bigl(\de_R\ot 1\bigr)\de_R\bigl((a,b)\bigr) & = & \sum\de_R\bigl(a_1,b_1\bigr)\ot\bigl(a_2,b_2\bigr) \\
     & = & \sum\sum\bigl(a_{11},b_{11}\bigr)\ot\bigl(a_{12},b_{12}\bigr)\ot\bigl(a_2,b_2\bigr) \\
    \bigl(1\ot\de_R\bigr)\de_R\bigl((a,b)\bigr) & = & \sum\bigl(a_1,b_1\bigr)\ot\de_R\bigl(a_2,b_2\bigr) \\
     & = & \sum\sum \bigl(a_1,b_1\bigr)\ot\bigl(a_{21},b_{21}\bigr)\ot\bigl(a_{22},b_{22}\bigr).
  \end{array}
$$ As the coproducts $\de_A$ and $\de_B$ are coassociatives the expressions $\sum\sum\bigl(a_{11},b_{11}\bigr)\ot\bigl(a_{12},b_{12}\bigr)\ot\bigl(a_2,b_2\bigr)$ and $\sum\sum \bigl(a_1,b_1\bigr)\ot\bigl(a_{21},b_{21}\bigr)\ot\bigl(a_{22},b_{22}\bigr)$ coincide, then the coproduct $\de_R$ is coassociative.
\item $\de_R$ is a morphism of bimodules if $$\xymatrix@C=1.5cm{R\ot R\ar[r]^{m}\ar[d]_{\de_R\ot 1}& R\ar[d]^{\de_R}\\
R\ot R\ot R\ar[r]_{1\ot m}&R\ot R}\quad \xymatrix@C=1.5cm{R\ot
R\ar[r]^{m}\ar[d]_{1\ot\de_R}&R\ar[d]^{\de_R}\\R\ot R\ot
R\ar[r]_{m\ot 1}&R\ot R}$$ commute. We will prove that the first
diagram commutes, the other case is analogous.

We knows that $\de_A$ and $\de_B$ are bimodule morphisms, then
\begin{equation}\label{eq1}
\sum a_1\ot a_2c=\sum (ac)_1\ot (ac)_2,
\end{equation}
for all $a, c\in A$, and
\begin{equation}\label{eq2}
\sum b_1\ot b_2d=\sum(bd)_1\ot(bd)_2,
\end{equation} for all $b, d\in B$.

Let $\bigl(a,b\bigr),\bigl(c,d\bigr)\in R$, then
$$\begin{array}{rcl}
                           \de_R\bigl(ac,bd\bigr) & = & \sum \bigl((ac)_1,(bd)_1\bigr)\ot\bigl((ac)_2,(bd)_2\bigr)  \\
                            \bigl(\de_R\ot 1\bigr)\bigl((a,b)\ot(c,d)\bigr)& = & \sum \bigl(a_1,b_1\bigr)\ot\bigl(a_2,b_2\bigr)\ot\bigl(c,d\bigr)   \\
                            & \Rightarrow &    \\
                           \bigl(1\ot m\bigr)\bigl(\de_R\ot 1\bigr)\bigl((a,b)\ot(c,d)\bigr) & = & \sum  \bigl(a_1,b_1\bigr)\ot \bigl(a_2c,b_2d\bigr)
                         \end{array}
$$
Using (\ref{eq1}) and (\ref{eq2}) we have that $\sum
\bigl((ac)_1,(bd)_1\bigr)\ot\bigl((ac)_2,(bd)_2\bigr)=\sum
\bigl(a_1,b_1\bigr)\ot \bigl(a_2c,b_2d\bigr).$ Then the first
diagram commutes.
\end{enumerate}
\end{proof}

\begin{example}
Let be the quivers $Q_A$, $Q_B$ and $Q_C$ illustrated in the next
picture,
$$\includegraphics{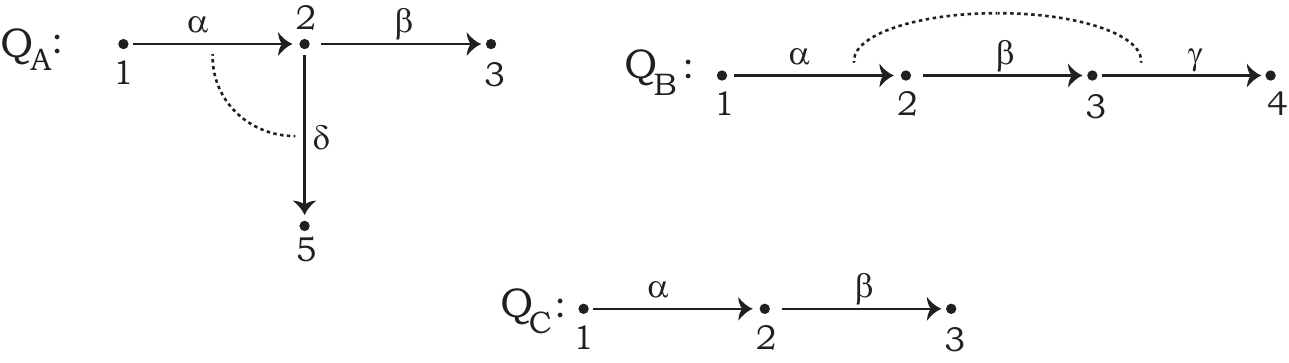}$$ as before the dashed lines represent the relations $\al\delta=0$ and $\al\be\ga=0$.
The pullback algebra $R=A\times_{C}B$ where $f_A:A\rt C$ and
$f_B:B\rt C$ are the natural projections, by the previous theorem,
admits a nearly Frobenius structure. In the next step we develop the
associated coproduct.

First, we describe the nearly Frobenius structures of the algebras $A$, $B$ and $C$.\\

The path algebra $C$ admits only one independent coproduct, this is
$$\begin{array}{rclcrclrcll}
    \de\bigl(e_1\bigr) & = & \al\be\ot e_1, &  &  \de\bigl(\al\bigr) & = & \al\be\ot\al,  &  & \de\bigl(e_2\bigr)     & = & \be\ot\al, \\
     \de\bigl(\be\bigr) & = & \be\ot\al\be, &  &  \de\bigl(e_3\bigr) & = &  e_3\ot\al\be, &  &  \de\bigl(\al\be\bigr) & = & \al\be\ot\al\be.
  \end{array}
$$
The path algebra $B$ admits three independent coproducts, these are
$$\begin{array}{rclrcl}
    \de\bigl(e_1\bigr) & = & a\al\be\ot e_1,   &  \de\bigl(\al\bigr) & = & a\al\be\ot\al, \\
    \de\bigl(e_2\bigr) & = & a\be\ot\al+b\be\ga\ot e_2+c\be\ga\ot\al,   & \de\bigl(e_4\bigr) & = & be_4\ot\be\ga,  \\
    \de\bigl(e_3\bigr) & = & ae_3\ot\al\be+b\ga\ot\be+c\ga\ot\al\be,   &  \de\bigl(\ga\bigr) & = & b\ga\ot\be\ga,\\
    \de\bigl(\be\bigr) & = & a\be\ot\al\be+b\be\ga\ot\be+c\be\ga\ot\al\be,   &  \de\bigl(\al\be\bigr) & = & a\al\be\ot\al\be,\\
                       &   &                     &   \de\bigl(\be\ga\bigr)& = & b\be\ga\ot\be\ga.
  \end{array}
$$
The path algebra $A$ admits only one independent coproduct, this is
$$\begin{array}{rclrclrclrcl}
    \de\bigl(e_1\bigr) & = & \al\be\ot e_1, &  \de\bigl(\al\bigr)    & = & \al\be\ot\al, &  \de\bigl(e_2\bigr) & = & \be\ot\al, &  \de\bigl(\be\bigr) & = & \be\ot\al\be,\\
    \de\bigl(e_3\bigr) & = & e_3\ot\al\be,    & \de\bigl(\delta\bigr) & = & 0,             & \de\bigl(e_5\bigr) & = & 0,          & \de\bigl(\al\be\bigr) & = & \al\be\ot\al\be.
  \end{array}
$$
The pullback algebra $R$ is defined by the next diagram
$$\xymatrix{
R\ar[r]^{\pi_1}\ar[d]_{\pi_2}& A\ar[d]^{f_A}\\
B\ar[r]_{f_B}&C }$$ Then $$
\begin{array}{c}
  R=\left\langle \bigl(e_1,e_1\bigr),\bigl(e_2,e_2\bigr),\bigl(e_3,e_3\bigr),\bigl(\al,\al,\bigr),\bigl(\be,\be\bigr),\bigl(\al\be,\al\be\bigr),\bigl(e_5,0\bigr),
\bigl(\delta,0\bigr),\bigl(0,e_4\bigr),\bigl(0,\ga\bigr),\bigl(0,\be\ga\bigr)\right\rangle \\
  \simeq \\
  \bigl\langle e_1, e_2, e_3, \al, \be, \al\be, e_5, \delta, e_4, \ga, \be\ga \bigr\rangle,
\end{array}
$$
that is the path algebra associated to the pushout quiver
$Q_A\coprod_{Q_C}Q_B$.

Finally, the coproduct of $R$, by the last identification, is
$$\begin{array}{rclrclrclrcl}
    \de\bigl(e_1\bigr) & = & \al\be\ot e_1,  & \de\bigl(\al\bigr)     & = & \al\be\ot\al & \de\bigl(e_2\bigr) & = & \be\ot\al, &  \de\bigl(\be\bigr) & = & \be\ot\al\be, \\
    \de\bigl(e_3\bigr) & = &  e_3\ot\al\be,  & \de\bigl(\gamma\bigr)  & = & 0,            & \de\bigl(e_4\bigr) & = & 0,          &  \de\bigl(\delta\bigr) & = & 0, \\
    \de\bigl(e_5\bigr) & = & 0,               & \de\bigl(\al\be\bigr) & = & \al\be\ot\al\be, &    \de\bigl(\be\ga\bigr) & = & 0. &&&
  \end{array}$$

\end{example}
Using the Lemma 2.1.2 of \cite{Jessica} we have that $R$ is the path
algebra of the pushout quiver $Q_R=Q_A\coprod_{Q_C}Q_B$. This quiver
is represented in the next picture.
$$\includegraphics{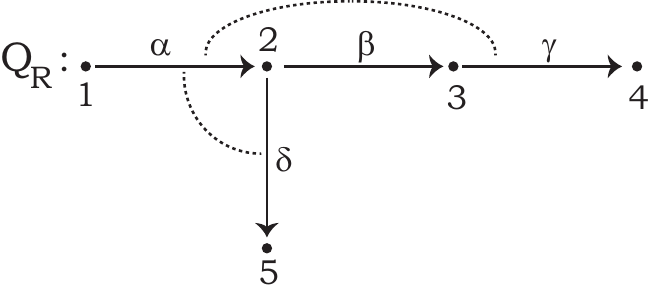}$$
The algebra associated to the pushout quiver $Q_A\coprod_{Q_C}Q_B$
is generated by $$\bigl\{e_1, e_2, e_3, e_4, e_5, \al, \be, \ga,
\delta, \al\be, \be\ga\bigr\}.$$ This algebra admits two independent
nearly Frobenius coproducts, these are
$$\begin{array}{rclcrcl}
    \de\bigl(e_1\bigr) & = & a\al\be\ot e_1 &  & \de\bigl(\al\bigr)  & = & a\al\be\ot\al \\
    \de\bigl(e_2\bigr) & = &a\be\ot\al+b\be\ga\ot\al &  & \de\bigl(\be\bigr)       & = & \be\ot\al\be+b\be\ga\ot\al\be \\
    \de\bigl(e_3\bigr) & = & a e_3\ot\al\be+b\ga\ot\al\be &  & \de\bigl(\gamma\bigr)    & = & 0 \\
    \de\bigl(e_4\bigr) & = & 0 &  & \de\bigl(\delta\bigr)    & = & 0 \\
    \de\bigl(e_5\bigr) & = & 0 &  & \de\bigl(\al\be\bigr)    & = & a\al\be\ot\al\be \\
     &  &                      &  & \de\bigl(\be\gamma\bigr) & = & 0
  \end{array}
$$
Note that if $b=0$ we have the coproduct detected by the pullback
structure defined in the Theorem \ref{teo1} and developed in the
previous example.

\section{Quivers and nearly Frobenius algebras}
\hspace{.5cm} This section is divided in three parts. In the first part we prove that an indecomposable algebra associated to a bound quiver $\bigl(Q,I\bigr)$
with no monomial relations  admits a non trivial nearly Frobenius structure if and only if the quiver is $\overrightarrow{\mb{A}_n}$ and $I=0$. Moreover, in this case the Frobenius dimension is one. In the second part we deal with gentle algebras. If the quiver associated to a gentle algebra $A$ has no oriented cycles we show that the Frobenius dimension of $A$ is finite and  we determine this number by an algorithm. In the last part we exhibit a family of algebras $A=\bigl\{A_C\bigr\}_C$ given by bound quivers for which
$\op{Frobdim}\bigl(A_C\bigr)>\op{dim}_\k\bigl(A_C\bigr)$.

\subsection{Path algebras}
\begin{lem}\label{e2}
If $Q=\overrightarrow{\mb{A}_n}$, that is, $Q$ is the following
quiver
$$\includegraphics{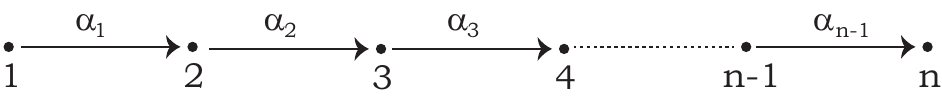},$$
the path algebra $A=\k Q$,  $$\k Q=\bigl\langle e_1,
e_2,\dots,e_n,\al_i\dots \al_{j}:i=1,\dots,n, i\leq j\leq n
\bigr\rangle,$$ admits only one independent nearly Frobenius
structure, where the coproduct is defined as follows
$$\begin{array}{rcl}
    \de\bigl(e_1\bigr)&=&\al_1\dots\al_{n-1}\ot e_1; \\
    \de\bigl(e_n\bigr)&=& e_n\ot\al_1\dots\al_{n-1}; \\
    \de\bigl(e_i\bigr)&=&\al_i\dots\al_{n-1}\ot\al_1\dots\al_{i-1},\quad i=2,\dots ,n-1; \\
    \de\bigl(\al_i\dots\al_j\bigr)&=&\al_i\dots\al_{n-1}\ot\al_1\dots\al_j,\quad 1\leq i\leq j\leq n-1.
  \end{array}
$$
\end{lem}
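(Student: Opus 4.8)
The plan is to use Remark \ref{lemma1} to reduce the whole problem to the single element $\de(1)$, and then to extract its form from the bimodule axioms alone, leaving coassociativity only as a final verification. Throughout write $p_{ij}$ for the unique path from vertex $i$ to vertex $j$ (so $p_{ii}=e_i$, $p_{in}=\al_i\cdots\al_{n-1}$, $p_{1i}=\al_1\cdots\al_{i-1}$), and recall the multiplication rule $p_{ij}p_{kl}=\delta_{jk}\,p_{il}$. By Remark \ref{lemma1} any nearly Frobenius coproduct satisfies $\de(x)=(x\ot 1)\de(1)=\de(1)(1\ot x)$, so it is determined by $\de(1)$, and it suffices to decide which elements of $A\ot A$ can occur. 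I would first localize at the idempotents: since $\de(e_m)=(e_m\ot 1)\de(1)$ lies in $e_mA\ot A$ while $\de(e_m)=\de(1)(1\ot e_m)$ lies in $A\ot Ae_m$, every term of $\de(e_m)$ has first factor a path starting at $m$ and second factor a path ending at $m$, whence
$$\de(e_m)=\sum_{j\ge m,\;k\le m} c^{(m)}_{j,k}\,p_{mj}\ot p_{km},\qquad \de(1)=\sum_{m=1}^n\de(e_m).$$

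The heart of the argument is to compute $\de(\al_m)$ in the two ways the bimodule axioms provide and compare (in Sweedler notation $\de(x)=\sum x_1\ot x_2$, the two diagrams read $\de(xy)=\sum x_1\ot x_2y$ and $\de(xy)=\sum xy_1\ot y_2$). Writing $\al_m=e_m\al_m$ gives $\de(\al_m)=\sum c^{(m)}_{j,k}\,p_{mj}\ot p_{k,m+1}$ (using $p_{km}\al_m=p_{k,m+1}$), while writing $\al_m=\al_m e_{m+1}$ gives $\de(\al_m)=\sum c^{(m+1)}_{j,k}\,p_{mj}\ot p_{k,m+1}$ (using $\al_mp_{m+1,j}=p_{mj}$). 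Matching coefficients of the basis vectors $p_{mj}\ot p_{k,m+1}$ yields three facts for each $m\le n-1$: the terms with $j=m$ force $c^{(m)}_{m,k}=0$; the terms with $k=m+1$ force $c^{(m+1)}_{j,m+1}=0$; and on the overlap $c^{(m)}_{j,k}=c^{(m+1)}_{j,k}$. The last relation says $c^{(m)}_{j,k}$ is independent of the pivot for $k\le m\le j$, so $c^{(k)}_{j,k}=\dots=c^{(j)}_{j,k}$; call this common value $d_{j,k}$. Taking $j=m$ in the first vanishing relation forces $d_{j,k}=0$ for $j\le n-1$, and taking $k=m+1$ in the second forces $d_{j,k}=0$ for $k\ge 2$. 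Hence only $d_{n,1}$ survives, and
$$\de(1)=d_{n,1}\sum_{m=1}^n p_{mn}\ot p_{1m}.$$
This already proves $\op{dim}_\k\bigl(\ma{E}\bigr)\le 1$.

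It then remains to check that the displayed coproduct (the case $d_{n,1}=1$) genuinely lies in $\ma{E}$, which yields $\op{Frobdim}(A)=1$ and the stated formula. The induced map is $\de(p_{ij})=(p_{ij}\ot 1)\de(1)=p_{in}\ot p_{1j}$, since only the pivot $m=j$ survives the left multiplication; this is exactly the formula in the statement (in particular $\de(e_i)=p_{in}\ot p_{1i}$). The balance $(x\ot 1)\de(1)=\de(1)(1\ot x)$ is immediate — both sides equal $p_{an}\ot p_{1b}$ on $x=p_{ab}$ — so $\de$ is a bimodule morphism, and coassociativity is a one-line check: both $(\de\ot 1)\de(e_i)$ and $(1\ot\de)\de(e_i)$ equal $p_{in}\ot p_{1n}\ot p_{1i}$.

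I expect the only delicate point to be the bookkeeping in the comparison of coefficients in the second paragraph: one must track the path ranges carefully so that the ``first factor trivial'' terms ($j=m$) and the ``second factor trivial'' terms ($k=m+1$) are precisely what drive the exponents to their extreme values, pinning down $j=n$ and $k=1$. Once those two vanishing relations are read off correctly, everything else is routine path multiplication.
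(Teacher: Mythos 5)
Your argument is correct and reaches the same conclusion ($\op{Frobdim}(A)=1$ with exactly the stated coproduct), but the mechanism by which you kill the coefficients differs from the paper's. The paper anchors the elimination at the two extreme vertices: it writes the general form of $\de\bigl(e_1\bigr)$ and $\de\bigl(e_n\bigr)$, destroys almost all of their coefficients by computing $\de\bigl(\al_1\cdots\al_{n-1}\bigr)$ in two ways, and then pins down each internal $\de\bigl(e_i\bigr)$ by comparing against $\de\bigl(e_1\bigr)$ along the long path $\al_1\cdots\al_{i-1}$; finally it checks coassociativity. You never use long paths in the elimination stage: you compare $\de\bigl(e_m\bigr)$ and $\de\bigl(e_{m+1}\bigr)$ across the single arrow $\al_m$, extract for each $m$ the two vanishing relations (the $j=m$ and $k=m+1$ terms) together with the transfer relation $c^{(m)}_{j,k}=c^{(m+1)}_{j,k}$, and then telescope these local constraints along the quiver to see that only $d_{n,1}$ survives. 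What the paper's route buys is that each comparison kills many coefficients in one stroke, at the cost of treating $e_1$, $e_n$ and the internal vertices as separate cases; your route treats all vertices uniformly, makes the bookkeeping transparent, and is closer in spirit to the local-then-propagate arguments the paper itself uses later (Lemma \ref{lemma2} and Theorem \ref{theorem}), so it would splice well into that part of the paper. Two small points to tighten: your passage to $\de(1)$ and back (that the balance condition $(x\ot 1)\de(1)=\de(1)(1\ot x)$ for all $x$ is equivalent to the bimodule property) is exactly how the paper uses its remark on evaluation at the unit, so it is legitimate, but it deserves the one-line verification $\de(x)(1\ot y)=(x\ot 1)\de(1)(1\ot y)=(xy\ot 1)\de(1)=\de(xy)$; and the final coassociativity check should be stated for all basis paths $p_{ij}$, not only the idempotents $e_i$ --- the computation is the identical one line, $(\de\ot 1)\de\bigl(p_{ij}\bigr)=p_{in}\ot p_{1n}\ot p_{1j}=(1\ot\de)\de\bigl(p_{ij}\bigr)$, or alternatively note that the bimodule property reduces coassociativity on all of $A$ to coassociativity at the unit.
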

\begin{proof}
If we have a coproduct $\de$ the next condition is required
\begin{equation}\label{equat1}
\de\bigl(e_i\bigr)=\de\bigl(e_i\bigr)\bigl(1\ot
e_i\bigr)=\bigl(e_i\ot 1\bigr)\de\bigl(e_i\bigr),\quad\forall
i=1,\dots, n.
\end{equation}
This implies that the coproduct in the vertexes $e_1$ and $e_n$ is
$$\begin{array}{rcl}
    \de\bigl(e_1\bigr) & = & \displaystyle{a_0^1e_1\ot e_1+\sum_{i=1}^{n-1}a_i^1\al_1\dots\al_i\ot e_1},\quad a_i^1\in\k \\
    \de\bigl(e_n\bigr) & = & \displaystyle{a_0^ne_n\ot e_n+\sum_{i=1}^{n-1}a_i^ne_n\ot\al_i\dots\al_{n-1}},\quad a_i^n\in\k.
  \end{array}
$$
As
$\displaystyle{\de\bigl(\al_1\dots\al_{n-1}\bigr)=\de\bigl(e_1\bigr)\bigl(\al_1\dots\al_{n-1}\bigr)=\bigl(\al_1\dots\al_{n-1}\bigr)\de\bigl(e_n\bigr)}$
we have that
$$a_i^1=a_i^n=0\;\forall i=0,\dots,n-2.$$
Then the coproduct in these vertexes is
$$\begin{array}{ccc}
    \displaystyle{\de\bigl(e_1\bigr)} & = & \displaystyle{a\al_1\dots\al_{n-1}\ot e_1}, \\
    \displaystyle{\de\bigl(e_n\bigr)} & = & \displaystyle{a e_n\ot\al_1\dots\al_{n-1}
    }.
  \end{array}
$$
Using the equation (\ref{equat1}) the coproduct in the vertex $e_i$,
$i=2,\dots,n-1$ is
$$\de\bigl(e_i\bigr)=a_0^ie_i\ot e_i+\sum_{j=1}^{i-1}a_j^ie_i\ot\al_j\dots\al_{i-1}+\sum_{j=i}^{n-1}a_j^i\al_i\dots\al_j\ot e_i+\sum_{j=1}^{i-1}\sum_{k=i}^{n-1}a_{jk}^i\al_i\dots\al_k\ot\al_j\dots\al_{i-1}.$$
The coproduct in the path $\al_1\dots\al_{i-1}$ is given by
$$\de\bigl(\al_1\dots\al_{i-1}\bigr)=\de\bigl(e_1\bigr)\bigl(1\ot\al_1\dots\al_{i-1}\bigr)=\bigl(\al_1\dots\al_{i-1}\ot 1\bigr)\de\bigl(e_i\bigr),$$
then
$$\begin{array}{rcl}
    \de\bigl(\al_1\dots\al_{i-1}\bigr) & = & \displaystyle{a\al_1\dots\al_{n-1}\ot\al_1\dots\al_{i-1}} \\
     & = & \displaystyle{a_0^i\al_1\dots\al_{i-1}\ot e_i+\sum_{j=1}^{i-1}a_j^i\al_1\dots\al_{i-1}\ot\al_j\dots\al_{i-1}} \\
     & + & \displaystyle{\sum_{j=i}^{n-1}a_j^i\al_1\dots\al_{i-1}\al_i\dots\al_j\ot e_i}\\ & + & \displaystyle{\sum_{j=1}^{i-1}\sum_{k=i}^{n-1}a_{jk}^i\al_1\dots\al_{i-1}\al_i\dots\al_k\ot\al_j\dots\al_{i-1}}
  \end{array}
$$
therefore  $a_0^i=a_j^i=0$, $\forall j=1,\dots n-1,$ $a_{jk}^i=0$,
$\forall j=2,\dots n-1, k=1,\dots n-2$, $a_{1 n-1}^i=a$ and
$\displaystyle{\de\bigl(e_i\bigr)=a\al_i\dots\al_{n-1}\ot\al_1\dots\al_{i-1}}$.
Also, this determine the coproduct on paths $\al_i\dots\al_j$:
$$\de\bigl(\al_i\dots\al_j\bigr)=a\al_i\dots\al_{n-1}\ot\al_1\dots\al_j.$$
To conclude the construction we need to check that $\de$ is
coassociative.
$$\begin{array}{rcl}
    (\de\ot 1)\de\bigl(e_i\bigr) & = & \displaystyle{(\de\ot 1)\bigl(a\al_i\dots\al_{n-1}\ot\al_1\dots\al_{i-1}\bigr)} \\
                                 & = & \displaystyle{a^2\al_i\dots\al_{n-1}\ot\al_1\dots\al_{n-1}\ot\al_1\dots\al_{i-1}}, \\
    (1\ot\de)\de\bigl(e_i\bigr)  & = & \displaystyle{(1\ot \de)\bigl(a\al_i\dots\al_{n-1}\ot\al_1\dots\al_{i-1}\bigr)} \\
                                 & = & \displaystyle{a^2\al_i\dots\al_{n-1}\ot\al_1\dots\al_{n-1}\ot\al_1\dots\al_{i-1}}.
  \end{array}
$$
$$\begin{array}{rcl}
    (\de\ot 1)\de\bigl(\al_i\dots\al_j\bigr) & = & \displaystyle{(\de\ot 1)\bigl(a\al_i\dots\al_{n-1}\ot\al_1\dots\al_j\bigr)} \\
                                             & = & \displaystyle{a^2\al_i\dots\al_{n-1}\ot\al_1\dots\al_{n-1}\ot\al_1\dots\al_{j}},\\
    (1\ot \de)\de\bigl(\al_i\dots\al_j\bigr) & = & \displaystyle{(\de\ot 1)\bigl(a\al_i\dots\al_{n-1}\ot\al_1\dots\al_j\bigr)} \\
                                             & = & \displaystyle{a^2\al_i\dots\al_{n-1}\ot\al_1\dots\al_{n-1}\ot\al_1\dots\al_{j}}.
  \end{array}
$$
Then, a basis of the Frobenius space has only one coproduct and
$\op{Frobdim}(A)=1$.
\end{proof}

\begin{lem}\label{lemma2}
Let $\displaystyle{A=\frac{\k Q}{I}}$ be a finite dimensional
algebra.
If $\alpha, \mu\in Q_{1}$ with
$s\bigl(\alpha\bigr)=s\bigl(\mu\bigr)=p$
($t\bigl(\alpha\bigr)=t\bigl(\mu\bigr)=p$) such that no monomial
relation ends (starts) on $\al$
 or $\be$. Then $\Delta\bigl(e_p\bigr)=\Delta\bigl(\alpha\bigr)=\Delta\bigl(\mu\bigr)=0$
for all nearly Frobenius structure $\Delta$.

\end{lem}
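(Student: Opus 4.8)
The plan is to exploit Remark~\ref{lemma1}, which tells us that any nearly Frobenius coproduct $\de$ is completely determined by the bimodule relation
$$\de(x)=\bigl(x\ot 1\bigr)\de(e_p)=\de(e_p)\bigl(1\ot x\bigr)$$
whenever $x$ lives in the corner cut out by the idempotent $e_p$. First I would write the most general expression for $\de(e_p)$ as a $\k$-linear combination of tensors $u\ot v$ of paths, subject only to the requirement coming from $\de(e_p)=\bigl(e_p\ot 1\bigr)\de(e_p)=\de(e_p)\bigl(1\ot e_p\bigr)$; this forces every left tensor factor $u$ to start at $p$ and every right tensor factor $v$ to end at $p$. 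So $\de(e_p)=\sum_{u,v} c_{u,v}\, u\ot v$ where $s(u)=p$ and $t(v)=p$.

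\textbf{Using the two arrows with common source (or target).}
Suppose $s(\alpha)=s(\mu)=p$, so both $\alpha$ and $\mu$ begin at $p$. The bimodule identity gives, for any path $w$ starting at $p$,
$$\de(w)=\de(e_p)\bigl(1\ot w\bigr)\quad\text{and}\quad \de(w)=\bigl(w\ot 1\bigr)\de(e_p).$$
Apply this with $w=\alpha$ and with $w=\mu$. On one hand $\bigl(\alpha\ot 1\bigr)\de(e_p)$ left-multiplies every $u$ by $\alpha$; on the other hand $\de(e_p)\bigl(1\ot\alpha\bigr)$ right-multiplies every $v$ by $\alpha$. Matching these two expressions for $\de(\alpha)$ term by term in the path basis produces linear relations among the coefficients $c_{u,v}$. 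The crucial input is the hypothesis that \emph{no monomial relation ends on} $\alpha$ or $\mu$: this guarantees that the products $\alpha u$ and $v\alpha$ (respectively $\mu u$ and $v\mu$) do not collapse to zero in $A=\k Q/I$ for the relevant basis paths, so the bookkeeping in the path basis is faithful and no cancellation can be blamed on relations. Comparing the coefficient of a basis tensor on both sides, together with the analogous computation for $\mu$, I expect to force $c_{u,v}=0$ for every pair, hence $\de(e_p)=0$, and then $\de(\alpha)=\de(\mu)=0$ follow immediately from the bimodule formula.

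\textbf{The symmetric target case and the main obstacle.}
The case $t(\alpha)=t(\mu)=p$ is handled by the same argument read from the other side: now $\alpha$ and $\mu$ \emph{end} at $p$, one uses $\de(w)=\bigl(w\ot 1\bigr)\de(e_p)=\de(e_p)\bigl(1\ot w\bigr)$ for paths $w$ ending at $p$, and the hypothesis is that no monomial relation \emph{starts} on $\alpha$ or $\mu$. By symmetry (or by passing to $A^{op}$ via Theorem~\ref{theorem1}(1), which exchanges sources and targets), this reduces to the first case. The main obstacle I anticipate is the combinatorial matching of terms: because $p$ has \emph{two} outgoing (or incoming) arrows, the expansions $\bigl(\alpha\ot 1\bigr)\de(e_p)$ and $\de(e_p)\bigl(1\ot\alpha\bigr)$ involve paths that need not align one-to-one, and one must argue carefully—using exactly the no-monomial-relation-ending-on-$\alpha$-or-$\mu$ hypothesis to rule out spurious cancellations—that the only way both the $\alpha$-equation and the $\mu$-equation can hold simultaneously is for all coefficients to vanish. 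Concretely, I would isolate a basis tensor $u_0\ot v_0$ appearing with nonzero coefficient, track where $\alpha\cdot$ versus $\cdot\alpha$ and $\mu\cdot$ versus $\cdot\mu$ send it, and derive a contradiction from the two distinct branchings at $p$.
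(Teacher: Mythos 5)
There is a genuine gap, and it sits at the heart of your argument: the identity $\de(w)=\bigl(w\ot 1\bigr)\de\bigl(e_p\bigr)$ for a path $w$ \emph{starting} at $p$ is false. The bimodule property gives $\bigl(w\ot 1\bigr)\de\bigl(e_p\bigr)=\de\bigl(we_p\bigr)$, and with the paper's left-to-right composition $we_p=0$ whenever $t(w)\neq p$; so for $w=\alpha$ the expression $\bigl(\alpha\ot 1\bigr)\de\bigl(e_p\bigr)$ is not a second formula for $\de(\alpha)$ --- it is identically zero. If you nevertheless ``match'' it against $\de\bigl(e_p\bigr)\bigl(1\ot\alpha\bigr)$, you would conclude $\de\bigl(e_p\bigr)=0$ from a \emph{single} outgoing arrow, which proves too much: in $\overrightarrow{\mb{A}_n}$ the vertex $e_1$ has exactly one outgoing arrow and no relations, yet $\de\bigl(e_1\bigr)=\al_1\dots\al_{n-1}\ot e_1\neq 0$ by Lemma \ref{e2}. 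Tellingly, your argument as written never actually uses the presence of the second arrow $\mu$, which the conclusion genuinely requires. The source of the error is your adaptation of Remark \ref{lemma1}: that remark determines $\de$ by its value at the unit $1=\sum_{i\in Q_0}e_i$, not at the idempotent $e_p$, and substituting $e_p$ for $1$ is what produced the false identity.

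The correct second expression pins $\de(\alpha)$ against the coproduct at the \emph{target} vertex: $\de(\alpha)=\de\bigl(\alpha e_{q_1}\bigr)=\bigl(\alpha\ot 1\bigr)\de\bigl(e_{q_1}\bigr)$ where $t(\alpha)=q_1$, and this is what the paper's proof does. Writing $\de\bigl(e_p\bigr)=\sum a_{ij}\al_i\ot\be_j$ (first factors starting at $p$, second factors ending at $p$, as you correctly set up) and $\de\bigl(e_{q_1}\bigr)=\sum b_{ij}\ga_i\ot\delta_j$, the equation $\sum a_{ij}\al_i\ot\be_j\al=\sum b_{ij}\al\ga_i\ot\delta_j$ shows that every first factor $\al_i$ occurring in $\de\bigl(e_p\bigr)$ must be a path beginning with the arrow $\alpha$; here the hypothesis that no monomial relation ends on $\alpha$ is used only to ensure the products $\be_j\al$ on the left are nonzero (it does not, and need not, control $\alpha\ga_i$, contrary to what you assert). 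Repeating the comparison with $\mu$ and $\de\bigl(e_{q_2}\bigr)$ forces every first factor to begin with $\mu$ as well; since $\alpha\neq\mu$, no nonzero path does both, hence $\de\bigl(e_p\bigr)=0$, and then $\de(\alpha)=\de\bigl(e_p\bigr)\bigl(1\ot\alpha\bigr)=0=\de(\mu)$. Your final paragraph gestures toward tracking a nonzero tensor through the two branchings at $p$, but without bringing $\de\bigl(e_{q_1}\bigr)$ and $\de\bigl(e_{q_2}\bigr)$ into the picture there is no valid equation through which to track it.
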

\begin{proof}
We prove the first case, the other is analogous. The situation is
the following
$$\includegraphics[scale=1.0]{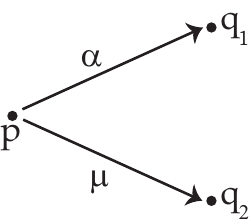}$$ with $p,
q_{1}$ and $q_{2}\in Q_{0}$, $\alpha, \mu\in Q_{1}$,
$s\bigl(\alpha\bigr)=s\bigl(\mu\bigr)=p$,
$t\bigl(\alpha\bigr)=q_{1}$ and $t\bigl(\mu\bigr)=q_{2}$. Since
$\Delta\bigl(e_p\bigr)=\bigl(e_p\ot
1\bigr)\Delta\bigl(e_p\bigr)=\Delta\bigl(e_p\bigr)\bigl(1\ot
e_p\bigr)$ the elementary tensors appearing in
$\Delta\bigl(e_{p}\bigr)$ must have the first coordinate starting in
$e_{p}$ and the second coordinate ending in $e_{p}$, this means:
$$\sum a_{ij}\al_i\ot\be_j$$
where $s\bigl(\al_i\bigr)=p$ and $t\bigl(\be_j\bigr)=p$.\\ In the
same way we have that: $$\de\bigl(e_{q_1}\bigr)=\sum
b_{ij}\ga_i\ot\delta_j$$ with $s\bigl(\ga_i\bigr)=q_1$,
$t\bigl(\delta_j\bigr)=q_1$.

Since
$\Delta(\alpha)=\Delta\bigl(e_p\bigr)\bigr(1\ot\alpha\bigr)=\bigl(\al\ot
1\bigr)\Delta\bigl(e_{q_{1}}\bigr)$ we have that
$$\de\bigl(\al\bigr)=\sum b_{ij}\al\ga_i\ot\delta_j=\sum a_{ij}\al_i\ot\be_j\al,$$
then $\al\ga_i=\al_i$ and $\delta_j=\be_j\al$ (some
$\alpha\gamma_{i}$ could be zero but $\be_j\al\neq 0$ $\forall j$
since there is no relation ending on $\al$). Therefore
$$\de\bigl(e_p\bigr)=\sum a_{ij}\al\ga_i\ot\be_j.$$
On the other hand we have that $$\de\bigl(e_{q_2}\bigr)=\sum
c_{ij}\eta_i\ot\xi_j,$$ with $s\bigl(\eta_i\bigr)=q_2$,
$t\bigl(\xi_j\bigr)=q_2$. Then
$$\de(\mu)=\sum c_{ij}\mu\eta_i\ot\xi_j=\sum a_{ij}\al_i\ot\be_j\mu.$$
So we conclude that $\de\bigl(e_p\bigr)=\sum a_{ij}\mu\eta_i\ot\be_j.$\\
Comparing $\de\bigl(e_p\bigr)=\sum a_{ij}\mu\eta_i\ot\be_j$ and
$\de\bigl(e_p\bigr)=\sum a_{ij}\al\ga_i\ot\be_j$ we deduce that
$\de\bigl(e_p\bigr)=0$ and therefore
$\de\bigl(\al\bigr)=\de\bigl(\mu\bigr)=0$.
\end{proof}

\begin{thm}\label{theorem}
Let $\displaystyle{A=\frac{\k Q}{I}}$ be a finite dimensional
indecomposable algebra such that there are no monomial relations. If
$A$ admits a non trivial  nearly Frobenius structure then  $I=0$ and
$Q=\overrightarrow{\mb{A}_n}$.
\end{thm}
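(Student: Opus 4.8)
The plan is to use nontriviality of the coproduct to force a nonzero coproduct at \emph{every} vertex, then invoke Lemma~\ref{lemma2} to rule out all branching, eliminate oriented cycles by finite dimensionality, and finally observe that a linear quiver has no parallel paths and hence $I=0$.

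First I would record the shape of $\de$ on idempotents. Writing $e_p=e_pe_p$ and using the bimodule identities $\de(ab)=\sum a_1\ot a_2b=\sum ab_1\ot b_2$ (equivalently Remark~\ref{lemma1}), every elementary tensor occurring in $\de(e_p)$ has the form $w\ot w'$ with $s(w)=p$ and $t(w')=p$; so $\de(e_p)=\sum_{w,w'}a_{w,w'}\,w\ot w'$, the sum ranging over paths $w$ starting at $p$ and $w'$ ending at $p$. Since $\de\neq 0$ and $\de(1)=\sum_p\de(e_p)$, there is a vertex $p_0$ with $\de(e_{p_0})\neq 0$.

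The crux is a propagation argument. I claim that whenever $\de(e_p)\neq 0$ and $q$ is joined to $p$ by an arrow, then $\de(e_q)\neq 0$. For an arrow $\ga\colon p\to q$ the bimodule identities applied to $\ga=e_p\ga=\ga e_q$ give $\de(e_p)(1\ot\ga)=(\ga\ot 1)\de(e_q)$; the left-hand side equals $\sum_{w,w'}a_{w,w'}\,w\ot w'\ga$, and since there are no monomial relations each $w'\ga$ is a nonzero path and the tensors $w\ot w'\ga$ remain pairwise distinct, so the left-hand side is nonzero and forces $\de(e_q)\neq 0$. An arrow $\ga\colon q\to p$ is handled symmetrically using $(\ga\ot 1)\de(e_p)\neq 0$. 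As $A$ is indecomposable its quiver $Q$ is connected, so propagating from $p_0$ yields $\de(e_p)\neq 0$ for all $p\in Q_0$. This is the step I expect to be the main obstacle, since it is where nontriviality is turned into global information and where the no-monomial-relations hypothesis is essential.

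With $\de(e_p)\neq 0$ for every $p$, Lemma~\ref{lemma2} does the structural work: its hypothesis (no monomial relation starting or ending on the relevant arrows) holds vacuously, so two distinct arrows out of $p$, or two distinct arrows into $p$, would force $\de(e_p)=0$; hence every vertex has in-degree and out-degree at most one. A connected quiver with this property is either $\overrightarrow{\mb{A}_n}$ or a single oriented cycle, and the cycle is excluded because finite dimensionality makes $I$ admissible, so $R_Q^m\subseteq I$ for some $m$ and any cyclic path $c$ satisfies $c^k\in R_Q^m\subseteq I$ for large $k$, a monomial relation contrary to hypothesis. Thus $Q=\overrightarrow{\mb{A}_n}$. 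To conclude $I=0$, I would note that in $\overrightarrow{\mb{A}_n}$ there is at most one path between any two vertices; decomposing any $x\in I$ as $\sum_{a,b}e_axe_b$ with each summand in $I$ and supported on paths from $a$ to $b$, every nonzero summand is a scalar multiple of a single path, i.e.\ a monomial relation, so $I=0$. Lemma~\ref{e2} then confirms that this quiver indeed supports a (one-dimensional) nearly Frobenius structure, consistent with the statement.
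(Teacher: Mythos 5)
Your proposal is correct and rests on the same two pillars as the paper's own proof --- Lemma \ref{lemma2} to forbid branching vertices, and a connectedness/propagation argument that exploits the absence of monomial relations --- but it is organized in the contrapositive direction, and it is in fact more complete. The paper assumes a branch vertex $p$ exists, gets $\de\bigl(e_p\bigr)=0$ from Lemma \ref{lemma2}, and then propagates \emph{vanishing} along walks to every vertex, concluding $\de\equiv 0$; you instead start from a vertex with $\de\bigl(e_{p_0}\bigr)\neq 0$ (which exists by nontriviality) and propagate \emph{nonvanishing} to every vertex, then apply Lemma \ref{lemma2} at each vertex. The two propagation steps are contrapositive readings of the same identity $\de\bigl(e_p\bigr)\bigl(1\ot\ga\bigr)=\bigl(\ga\ot 1\bigr)\de\bigl(e_q\bigr)$, so this difference is stylistic rather than mathematical. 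What your write-up genuinely adds are two steps the paper leaves implicit: excluding the oriented cycle (the only other connected quiver with in- and out-degree at most one), which you do via admissibility ($R_Q^m\subseteq I$ would create monomial relations), and deducing $I=0$ from the fact that $\overrightarrow{\mb{A}_n}$ has at most one path between any two vertices, so any nonzero relation would necessarily be monomial. One caveat, which you share with the paper rather than introduce: both arguments implicitly treat distinct nonzero paths as linearly independent in $A$ when concluding that expressions like $\sum a_{w,w'}\,w\ot w'\ga$ are nonzero; a non-monomial relation of the form $w_1'\ga-w_2'\ga\in I$ could in principle defeat this, and neither your proof nor the paper's addresses that possibility.
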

\begin{proof}
If there exist $p, q_{1}, q_{2}\in Q_{0}$ and $\alpha, \mu\in Q_{1}$
such that $s\bigl(\alpha\bigr)=s\bigl(\mu\bigr)=p$,
$t\bigl(\alpha\bigr)=q_{1}$ and $t\bigl(\mu\bigr)=q_{2}$, that is
$$\includegraphics{f17.pdf}$$
applying the Lema \ref{lemma2} we have that
$\de\bigl(e_p\bigr)=\de\bigl(\al\bigr)=\de\bigl(\mu\bigr)=0$.
Moreover, using that there exist no monomial relations, arguing in
the same way that in Lemma
\ref{lemma2} we conclude that $\de\bigl(e_{q_1}\bigr)=\de\bigl(e_{q_2}\bigr)=0$.\\
Since $A$ is a finite dimensional indecomposable algebra $Q$ is
finite and connected. Given a point $r$ of the quiver there is a
walk $w=p\sim p_1\sim \dots \sim p_s\sim r$ from $p$ to $r$, where
$\sim$ means that there is an arrow $p_i\rt p_j$ or $p_j\rt p_i$.
Then, since $\Delta\bigl(e_{p}\bigr)=0$ and there exist non monomial
relations, we can reproduce again the arguments and prove that
$\Delta\bigl(e_{p_1}\bigr)=
\dots=\Delta\bigl(e_{p_s}\bigr)=\Delta\bigl(e_{r}\bigr)=0$.
Therefore $\Delta\bigl(e_r\bigr)=0$ for any point of $Q_0$. Then the
coproduct $\de$ is trivial.
\end{proof}

\begin{cor}
Let $A$ be the path algebra associated to $Q$, a finite connected
quiver. Then $A$ admits a non trivial nearly Frobenius structure if
and only if $Q=\overrightarrow{\mb{A}_{n}}$.
\end{cor}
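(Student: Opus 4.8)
The plan is to deduce this corollary directly from Theorem \ref{theorem}, specialized to the case $I=0$, together with Lemma \ref{e2} for the converse implication. Since $A$ is the \emph{path} algebra $\k Q$, there are no relations at all, so the defining ideal is $I=0$; in particular there are vacuously no monomial relations, and Theorem \ref{theorem} becomes applicable once the remaining hypotheses are checked.

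For the implication ($\Leftarrow$), suppose $Q=\overrightarrow{\mb{A}_n}$. Then Lemma \ref{e2} exhibits an explicit non-trivial coproduct on $\k Q$ and shows $\op{Frobdim}(A)=1$, so $A$ carries a non-trivial nearly Frobenius structure. This direction requires no further work.

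For the implication ($\Rightarrow$), I would first record the two facts needed to invoke Theorem \ref{theorem}. First, because $Q$ is connected, the path algebra $A=\k Q$ is indecomposable: the $e_i$ form a complete set of orthogonal idempotents, and any decomposition of $A$ as a product of algebras would split $Q_0$ into two sets with no arrows between them, contradicting connectedness. Second, as noted, $I=0$ carries no monomial relations. Granting these, Theorem \ref{theorem} immediately yields that a non-trivial nearly Frobenius structure on $A$ forces $Q=\overrightarrow{\mb{A}_n}$, which is the claim.

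The main subtlety, and the step I would treat most carefully, is the finite-dimensionality hypothesis of Theorem \ref{theorem}: the path algebra $\k Q$ is finite dimensional exactly when $Q$ has no oriented cycle. For acyclic connected $Q$ the argument above is complete. If $Q$ contains an oriented cycle then $Q\neq\overrightarrow{\mb{A}_n}$, and one must still rule out a non-trivial structure directly. Here Lemma \ref{lemma2} does most of the work: any vertex at which two arrows start (or finish) forces $\de$ to vanish there, and by the connectedness/walk argument of Theorem \ref{theorem} this triviality propagates to all of $Q$; hence a non-trivial structure can occur only when every vertex has in-degree and out-degree at most one, i.e. when $Q$ is either $\overrightarrow{\mb{A}_n}$ or a single oriented cycle. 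The remaining oriented-cycle case I would dispatch by the same bookkeeping of $\de(1)$ used in Example \ref{example1}: solving $\de(x^{k})=(x^{k}\ot 1)\de(1)=\de(1)(1\ot x^{k})$ around the cycle forces the coefficients of $\de(1)$ to be constant along anti-diagonals while vanishing on the boundary; since the cyclic path algebra is infinite dimensional there is no top-degree truncation to absorb this, so every coefficient collapses to zero and $\de$ is trivial.
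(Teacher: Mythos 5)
Your proof is correct, and for the two implications it uses exactly the ingredients the paper does: Lemma \ref{e2} for sufficiency and Theorem \ref{theorem} for necessity. The difference is that you noticed, and repaired, a point the paper's own two-line proof passes over in silence: Theorem \ref{theorem} is stated for \emph{finite dimensional} algebras, whereas the path algebra $\k Q$ of a finite connected quiver is finite dimensional only when $Q$ has no oriented cycle. The paper simply cites the theorem; your version splits into the acyclic case (where the citation is legitimate, after the easy observations that connectedness gives indecomposability and that $I=0$ carries no monomial relations) and the cyclic case, which you handle by hand. Your treatment of that extra case is sound: the argument of Lemma \ref{lemma2} and the propagation step in the proof of Theorem \ref{theorem} use only that elements of $A\ot A$ are finite sums of tensors of paths and that there are no relations to create cancellations, so they apply verbatim to the infinite dimensional $\k Q$ and reduce you to a single oriented cycle; there the same anti-diagonal bookkeeping as in Example \ref{example1} forces the coproduct to vanish at every vertex, because without a truncation $x^{n+1}=0$ the boundary conditions annihilate every anti-diagonal. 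Two small points of hygiene: you should say you are re-running the \emph{argument} of Lemma \ref{lemma2} rather than invoking the lemma itself, since as stated it assumes finite dimensionality; and for a cycle with more than one vertex the bookkeeping runs over the idempotents $e_i$ rather than over $\de(1)$ alone, though this changes nothing essential. What the comparison buys: the paper's proof is shorter but, read literally, only covers acyclic quivers; yours proves the corollary as actually stated, for every finite connected quiver.
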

\begin{proof}
If $Q=\overrightarrow{\mb{A}_{n}}$, by the Lemma \ref{e2}, there
exists a unique non trivial nearly Frobenius structure on $A$.

Suppose now that $A$ is the path algebra associated to $Q$ with a
non trivial nearly Frobenius structure, then, by the Theorem
\ref{theorem}, we have that $Q=\overrightarrow{\mb{A}_{n}}$.
\end{proof}

\subsection{Gentle algebras}
\begin{lem}
The algebra associated to the quiver
$$\includegraphics{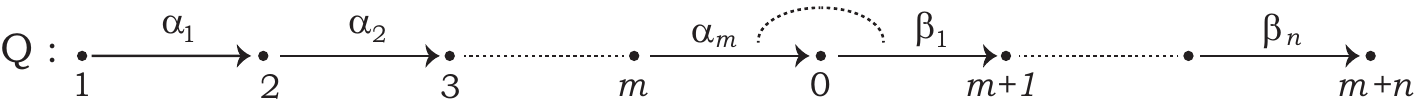}$$
with the relation $\al_m\be_1=0$, admits $mn+2$ independent nearly
Frobenius structures, these are
$$\begin{array}{rclcrcl}
\displaystyle{\de\bigl(e_1\bigr)} & = & \displaystyle{a\al_1\dots\al_m\ot e_1,} &  & \displaystyle{\de\bigl(e_{m+1}\bigr)} & = & \displaystyle{b\be_2\dots\be_n\ot\be_1,} \\
\displaystyle{\de\bigl(e_i\bigr)} & = & \displaystyle{a\al_i\dots\al_m\ot\al_1\dots\al_{i-1},} &  & \displaystyle{\de\bigl(e_{m+i}\bigr)} & = & \displaystyle{b\be_{i+1}\dots\be_n\ot\be_1\dots\be_i,} \\
\displaystyle{\de\bigl(e_m\bigr)} & = & \displaystyle{a\al_m\ot\al_1\dots\al_{m-1},} &  & \displaystyle{\de\bigl(e_{m+n}\bigr)} & = & \displaystyle{be_{m+n}\ot\be_1\dots\be_n,}\\
\displaystyle{\de\bigl(\al_i\dots\al_j\bigr)} & = &
\displaystyle{a\al_i\dots\al_m\ot\al_1\dots\al_j,}  &  &
\displaystyle{\de\bigl(\be_i\dots\be_j\bigr)} & = &
\displaystyle{b\be_i\dots\be_n\ot\be_1\dots\be_j,}
\end{array}$$
$$\de\bigl(e_0\bigr)=ae_0\ot\al_1\dots\al_m+b\be_1\dots\be_n\ot e_0+\sum_{i=1}^m\sum_{j=1}^nc_{ij}\be_1\dots\be_j\ot\al_i\dots\al_m,$$
where $a,b, c_{ij}\in\k$, $i=1,\dots ,m$ and $j=1,\dots ,n$.
Therefore $\op{Frobdim}\left(\frac{\k Q}{I}\right)=mn+2$.
\end{lem}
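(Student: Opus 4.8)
The plan is to determine every nearly Frobenius coproduct $\de$ explicitly, using Remark \ref{lemma1}: $\de$ is completely determined by the values $\de(e_p)$ at the idempotents together with the identities $\de(x)=(x\ot 1)\de(1)=\de(1)(1\ot x)$. Writing $P=\al_1\dots\al_m$, $P_i=\al_i\dots\al_m$, $Q=\be_1\dots\be_n$ and $Q_j=\be_1\dots\be_j$, I first record the orientation forced by the formulas: the quiver is the linear chain $e_1\to\dots\to e_m\to e_0\to e_{m+1}\to\dots\to e_{m+n}$, with the single relation $\al_m\be_1=0$ cutting it at the central vertex $e_0$ ($\al_m$ points into $e_0$, $\be_1$ out of it). The bimodule axiom forces, for each vertex $p$, that $\de(e_p)$ be supported on tensors $\ga\ot\delta$ with $s(\ga)=p$ and $t(\delta)=p$; listing the admissible paths at each vertex is the routine opening step.

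Next I would analyze the two arms separately. For the vertices $e_1,\dots,e_m$ the incident arrows and admissible paths are exactly those of the quiver $\overrightarrow{\mb{A}_{m+1}}$ on $e_1,\dots,e_m,e_0$, so the cascade of arrow identities $\de(\al_i)=\de(e_i)(1\ot\al_i)=(\al_i\ot 1)\de(e_{i+1})$ can be run as in Lemma \ref{e2}. The only place the $\be$-arm could interfere is the interface $\de(\al_m)=(\al_m\ot 1)\de(e_0)$; but every first-tensor factor of $\de(e_0)$ that starts along the $\be$-arm is a word beginning with $\be_1$, and $\al_m\be_1=0$ annihilates it, so $(\al_m\ot 1)\de(e_0)$ only sees the part of $\de(e_0)$ of the form $e_0\ot(\text{path ending at }e_0)$. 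The Lemma \ref{e2} computation then goes through verbatim: it yields the stated formulas on the $\al$-arm with a single scalar $a$, and forces the "$e_0\ot(\cdot)$" part of $\de(e_0)$ to be exactly $a\,e_0\ot P$ (in particular no $e_0\ot e_0$ term survives). The symmetric argument on the $\be$-arm, using $\de(\be_1)=\de(e_0)(1\ot\be_1)$ and $P_i\be_1=0$, produces the scalar $b$ and pins the "$(\cdot)\ot e_0$" part of $\de(e_0)$ to $b\,Q\ot e_0$.

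What remains undetermined in $\de(e_0)$ is precisely the cross part $\sum_{i,j}c_{ij}Q_j\ot P_i$, and the crux is to show these $mn$ coefficients are completely free. They impose no arm constraints, since at both interfaces they are killed by the relation ($\al_m Q_j=0$ and $P_i\be_1=0$), and they are invisible to the bimodule identities for the same reason. It then remains to verify coassociativity $(\de\ot 1)\de(e_0)=(1\ot\de)\de(e_0)$ for arbitrary $a,b,c_{ij}$; this is the one genuine computation. Using $\de(P_i)=a\,P_i\ot P$ and $\de(Q_j)=b\,Q\ot Q_j$ (which themselves rely on $\al_m\be_1=0$) one expands both sides and checks termwise matching: the $a^2$-, $ab$- and $b^2$-terms agree at once, while the two mixed families $a\,c_{ij}\,Q_j\ot P_i\ot P$ and $b\,c_{ij}\,Q\ot Q_j\ot P_i$ each reappear on the opposite side after relabeling indices. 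I expect this termwise matching to be the main obstacle, since it is exactly where the relation must conspire to make every $c_{ij}$ admissible; coassociativity on the arms and at the outer vertices is immediate from Lemma \ref{e2}.

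Finally I would read off the dimension. The coproduct depends linearly on the $mn+2$ parameters $a$, $b$ and $(c_{ij})$, and setting one parameter equal to $1$ and the rest to $0$ yields $mn+2$ coproducts with pairwise disjoint supports; in particular each $c_{ij}$ alone gives $\de(e_0)=Q_j\ot P_i$ with $\de$ vanishing elsewhere, which is readily checked to be nearly Frobenius (both coassociativity sides vanish, and the bimodule identity holds because only $e_0$ survives multiplication into $Q_j\ot P_i$). These are manifestly linearly independent and span the Frobenius space $\ma{E}$, whence $\op{Frobdim}\bigl(\k Q/I\bigr)=mn+2$.
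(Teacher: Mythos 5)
Your proposal is correct and takes essentially the same route as the paper: both derive the vertex coproducts from the idempotent identities, propagate along each arm via the bimodule condition on the paths $\al_1\dots\al_m$ and $\be_1\dots\be_n$ (where the relation $\al_m\be_1=0$ kills all cross interference at $e_0$, which you phrase as a reduction to Lemma \ref{e2} while the paper redoes the computation), leave the $mn$ cross coefficients $c_{ij}$ free, and verify coassociativity at $e_0$ (which the paper dismisses as ``a simple calculus'' and you carry out explicitly, correctly matching the mixed terms $a\,c_{ij}$ and $b\,c_{ij}$ across the two sides). The parameter count $a$, $b$, $c_{ij}$ and the resulting $\op{Frobdim}=mn+2$ agree with the paper.
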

\begin{proof}
Using that the coproduct satisfies
$$\de\bigl(e_i\bigr)=\bigl(e_i\ot 1\bigr)\de\bigl(e_i\bigr)=\de\bigl(e_i\bigr)\bigl(1\ot e_i\bigr)$$
we conclude that, on the vertex $e_i$ for $i=1,\dots ,m$, the
coproduct is
$$\de\bigl(e_i\bigr)=a_ie_i\ot e_i+\sum_{j=i}^ma_i^j\al_i\dots\al_j\ot e_i+\sum_{j=1}^ia_j^ie_i\ot\al_j\dots\al_i+\sum_{j=i}^m\sum_{k=1}^ia_{jk}^i\al_i\dots\al_j\ot\al_k\dots\al_i.$$
and $$\de\bigl(e_1\bigr)=a_1e_1\ot
e_1+\sum_{j=1}^ma_1^j\al_1\dots\al_j\ot e_1.$$ Similarly, we have
that the coproduct on the vertex $e_{i+m}$, for $i=1,\dots ,n$ is
$$\de\bigl(e_{m+i}\bigr)=b_ie_{m+i}\ot e_{m+i}+\sum_{j=i}^nb_i^j\be_i\dots\be_j\ot e_{m+i}+\sum_{j=1}^ib_j^ie_{m+i}\ot\be_j\dots\be_i+\sum_{j=i}^n\sum_{k=1}^ib_{jk}^i\be_i\dots\be_j\ot\be_k\dots\be_i$$
and $$\de\bigl(e_{m+n}\bigr)=b_ne_{m+n}\ot
e_{m+n}+\sum_{j=1}^nb_j^ne_{m+n}\ot\be_j\dots\be_n.$$ In the vertex
$e_0$ the situation is different.
$$\de\bigl(e_0\bigr)=c_0e_0\ot e_0+\sum_{k=1}^mc^ke_0\ot\al_k\dots\al_m+\sum_{j=1}^nc_j\be_1\dots\be_j\ot e_0+\sum_{j=1}^n\sum_{k=1}^mc_{jk}\be_1\dots\be_k\ot\al_k\dots\al_m.$$
If we consider the path $\al_1\dots\al_m$ the coproduct
$\de\bigl(\al_1\dots\al_m\bigr)$ is
$$\de\bigl(e_1\bigr)\bigl(1\ot\al_1\dots\al_m\bigr)=\bigl(\al_1\dots\al_m\ot 1\bigr)\de\bigl(e_0\bigr).$$
Then $a_1=a_1^j=0$ for all $j=1,\dots m-1$ and $c_0=c^k=0$ for all
$k=2,\dots m$. The only coefficient not zero is $c^1=a_1^m$.

Therefore
$$\de\bigl(e_1\bigr)=a\al_1\dots\al_m\ot e_1,$$
$$\de\bigl(e_0\bigr)=ae_0\ot\al_1\dots\al_m+\sum_{j=1}^nc_j\be_1\dots\be_j\ot e_0+\sum_{j=1}^n\sum_{k=1}^mc_{jk}\be_1\dots\be_k\ot\al_k\dots\al_m.$$
On the other hand, if we consider the path $\be_1\dots\be_n$
$$\de\bigl(\be_1\dots\be_n\bigr)=\de\bigl(e_0\bigr)\bigl(1\ot\be_1\dots\be_n\bigr)=\bigl(\be_1\dots\be_n\ot 1\bigr)\de\bigl(e_{m+n}\bigr),$$
then $b_n=b_j^n=0$ for all $j=2,\dots n$ and $c_j=0$ for all
$j=1,\dots n-1$. The only coefficient not zero is $c_n=b_1^n$.

Consequently
$$\de\bigl(e_{m+n}\bigr)=b\be_1\dots\be_n\ot e_{m+n},$$
$$\de\bigl(e_0\bigr)=ae_0\ot\al_1\dots\al_m+b\be_1\dots\be_n\ot e_{0}+\sum_{j=1}^n\sum_{k=1}^mc_{jk}\be_1\dots\be_j\ot\al_k\dots\al_m.$$

To complete the prove we consider the internal paths
$\al_1\dots\al_{i-1}$, $\be_1\dots\be_i$, $\al_i\dots \al_j$ and
$\be_i\dots\be_j$.

To the first family of paths we have that
$$\de\bigl(\al_1\dots\al_{i-1}\bigr)=\de\bigl(e_1\bigr)\bigl(1\ot\al_1\dots\al_{i-1}\bigr)=\bigl(\al_1\dots\al_{i-1}\ot 1\bigr)\de\bigl(e_i\bigr),$$
then $a_{jk}^i=0$ for all $j=i,\dots,m-1, k=2,\dots ,i$,
$a_i=a_i^j=a_j^i=0$ for all $j=1,\dots, m$ and $a=a^i_{m1}$.
Accordingly
$$\de\bigl(e_i\bigr)=a\al_i\dots\al_m\ot\al_1\dots\al_{i-1},\quad\mbox{for
all}\; i=2,\dots, m.$$ In a very similar way we have that
$$\de\bigl(e_{m+i}\bigr)=b\be_{i+1}\dots\be_n\ot\be_1\dots\be_{i},\quad\forall\;i=1,\dots,
n-1.$$ An immediate consequence of these results is
$$\de\bigl(\al_i\dots\al_j\bigr)=a\al_i\dots\al_m\ot\al_1\dots\al_j\quad\mbox{and}\quad \de\bigl(\be_i\dots\be_j\bigr)=b\be_i\dots\be_n\ot\be_1\dots\be_j.$$
The coassociativity of the coproduct in the vertices $e_i,\,
e_{i+m}$ and in the arrows $\al_i\dots\al_j$, $\be_i\dots\be_j$ is
analogous to the example \ref{e2}. In the vertex $e_0$ is a simple
calculus.
\end{proof}

\begin{lem}\label{lemma1}
The algebra associated to the quiver
$$\scalebox{.8}{\includegraphics{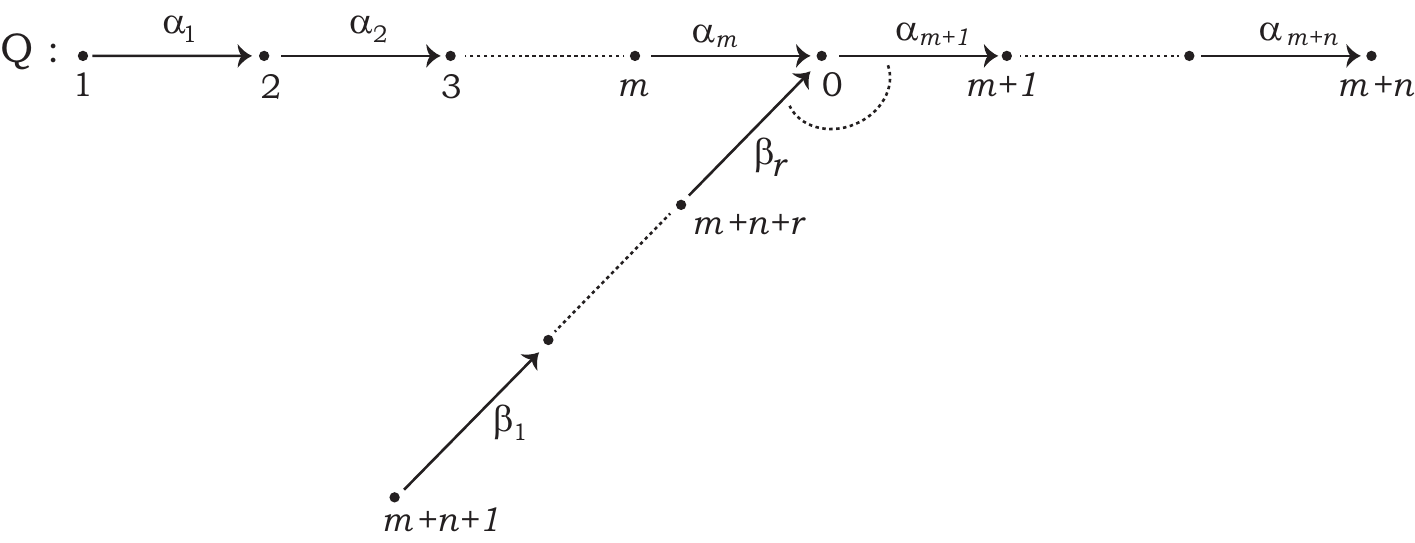}}$$
with the relation $\be_r\al_{m+1}=0$, admits only one nearly
Frobenius structure, this is
$$\begin{array}{rclcrcl}
\displaystyle{\de\bigl(e_1\bigr)} & = & \displaystyle{\al_1\dots\al_{m+n}\ot e_1,} &  & \displaystyle{\de\bigl(\al_1\bigr)} & = & \displaystyle{\al_1\dots\al_{m+n}\ot\al_1,}\\
\displaystyle{\de\bigl(e_i\bigr)} & = & \displaystyle{\al_i\dots\al_{m+n}\ot\al_1\dots\al_{i-1},} &  & \displaystyle{\de\bigl(\al_i\bigr)} & = & \displaystyle{\al_i\dots\al_{m+n}\ot\al_1\dots\al_i,} \\
\displaystyle{\de\bigl(e_{m+n}\bigr)} & = & \displaystyle{e_{m+n}\ot\al_1\dots\al_{m+n},} &  & \displaystyle{\de\bigl(\al_{m+n}\bigr)} & = & \displaystyle{\al_{m+n}\ot\al_1\dots\al_{m+n},} \\
  \end{array}$$
$$\de\bigl(e_{m+n+i}\bigr) =  0 \quad\mbox{and}\quad \de\bigl(\be_i\bigr)  =  0\quad\forall\; i=1,\dots,r.$$
Therefore $\op{Frobdim}(A)=1$.
\end{lem}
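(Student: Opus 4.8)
The plan is to split the quiver into its main linear part, which carries the length-$(m+n)$ path $\al_1\cdots\al_{m+n}$, and the $\be$-branch that merges into it at the vertex $p:=e_{m+1}$, where both $\al_m$ and $\be_r$ arrive, where $\al_{m+1}$ departs, and where the relation $\be_r\al_{m+1}=0$ holds. On the main chain I would argue exactly as in Lemma \ref{e2}. The bimodule identities $\de(e_i)=(e_i\ot 1)\de(e_i)=\de(e_i)(1\ot e_i)$ force the first (resp. second) tensor coordinate of $\de(e_i)$ to be a path starting (resp. ending) at $e_i$; evaluating $\de(\al_1\cdots\al_{m+n})=\de(e_1)(1\ot\al_1\cdots\al_{m+n})=(\al_1\cdots\al_{m+n}\ot 1)\de(e_{\mathrm{sink}})$ pins down $\de$ on the two extreme vertices, and propagating inward along $e_2,\dots,e_m$ yields $\de(e_i)=a\,\al_i\cdots\al_{m+n}\ot\al_1\cdots\al_{i-1}$ together with the stated values on the $\al_i\cdots\al_j$, all governed by a single free scalar $a$.

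The heart of the argument, and the step I expect to be the main obstacle, is the merge vertex $p=e_{m+1}$. Writing $\de(e_p)=\sum\lambda_{P,Q}\,P\ot Q$ with $P$ a path out of $p$ (necessarily $e_p$ or of the form $\al_{m+1}\cdots$) and $Q$ a path into $p$ (of the form $\al_i\cdots\al_m$, $e_p$, or a branch path $\be_k\cdots\be_r$), I would exploit the asymmetry created by the relation. The outgoing arrow $\al_{m+1}$ is blind to the branch terms, since right multiplication of any $\be_k\cdots\be_r$ by $\al_{m+1}$ is zero, so $\de(\al_{m+1})$ imposes no condition on them; this is precisely the mechanism that, in the previously treated quiver, left free cross-coefficients. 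Here, by contrast, the incoming arrow $\al_m$ does see every term: because $\al_m\al_{m+1}\neq 0$, the paths $\al_m P$ are nonzero and distinct, so comparing $(\al_m\ot 1)\de(e_p)$ with the already computed $\de(e_m)(1\ot\al_m)=a\,\al_m\cdots\al_{m+n}\ot\al_1\cdots\al_m$ — a single basis tensor whose second coordinate contains no $\be$'s — forces $\lambda_{P,Q}=0$ for every $Q\neq\al_1\cdots\al_m$ and isolates the main-chain term. Hence $\de(e_p)=a\,\al_{m+1}\cdots\al_{m+n}\ot\al_1\cdots\al_m$; in particular its unique first coordinate is $\al_{m+1}\cdots\al_{m+n}$, not $e_p$.

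From here the branch collapses. Since $\de(\be_r)=(\be_r\ot 1)\de(e_p)$ and $\be_r\al_{m+1}=0$ annihilates that unique term, I get $\de(\be_r)=0$. Then $\de(e_\beta)(1\ot\be_r)=\de(\be_r)=0$, where $e_\beta=s(\be_r)$; as each basis path $Q$ ending at $e_\beta$ produces a \emph{nonzero} path $Q\be_r$ and these are linearly independent, every coefficient of $\de(e_\beta)$ must vanish, i.e. $\de(e_\beta)=0$. Iterating this descent along $\be_{r-1},\be_{r-2},\dots$ gives $\de(e_{m+n+i})=0$ and $\de(\be_i)=0$ for all $i=1,\dots,r$. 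Finally I would verify coassociativity of the resulting coproduct exactly as in Lemma \ref{e2}, the branch part being identically $0$ and hence trivially coassociative, and conclude that the Frobenius space is one-dimensional, spanned by the parameter $a$, so that $\op{Frobdim}(A)=1$.
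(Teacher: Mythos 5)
Your proposal is correct, and although it rests on the same basic technique as the paper --- expand $\de$ at each vertex in the path basis and exploit the bimodule identities $\de(uv)=\de(u)(1\ot v)=(u\ot 1)\de(v)$ along paths, with the linear chain handled exactly as in Lemma \ref{e2} --- your treatment of the merge vertex and of the branch takes a genuinely different and more economical route. The paper writes out the full general coefficient expressions for $\de$ at the merge vertex (its $e_0$) and at the branch source $e_{m+n+1}$, and then eliminates coefficients in three successive passes, comparing along the maximal paths $\be_1\cdots\be_r$, $\al_{m+1}\cdots\al_{m+n}$ and $\al_1\cdots\al_m$; only in the last pass does the surviving branch coefficient vanish, after which the branch is killed by propagating forward from its source. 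You instead use the already-determined $\de(e_m)$ and the single comparison $(\al_m\ot 1)\de(e_p)=\de(e_m)(1\ot\al_m)$: since left multiplication by $\al_m$ is injective and nonzero on every path leaving the merge vertex (the relation obstructs $\be_r\al_{m+1}$, not $\al_m\al_{m+1}$), this one identity isolates $\de(e_p)=a\,\al_{m+1}\cdots\al_{m+n}\ot\al_1\cdots\al_m$ in a single stroke; the relation then gives $\de(\be_r)=(\be_r\ot 1)\de(e_p)=0$, and zeros propagate backward up the branch by the injectivity of $Q\mapsto Q\be_r$ on paths ending at $s(\be_r)$. Your diagnosis of the asymmetry is also exactly the right one: in the paper's preceding lemma (relation $\al_m\be_1=0$) both arrows adjacent to the middle vertex are blind to the cross terms, which is why $mn$ of them survive there, whereas here $\al_m$ sees everything. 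What the paper's heavier bookkeeping buys is uniformity, since the same template carries over to the neighbouring gentle-algebra lemmas where cross coefficients do survive; what yours buys is brevity, one comparison instead of three and no long coefficient expressions to carry. Two small points you should make explicit when writing it up: the chain propagation also applies verbatim to the vertices after the merge, because every path from the branch into that part of the chain is zero by the relation, so no branch terms can appear there; and, as in the paper, the coassociativity of the resulting one-parameter family still has to be checked, which you rightly defer to the computation of Lemma \ref{e2}.
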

\begin{proof}
In the extreme vertices the coproduct has the general expression
$$\begin{array}{rcl}
    \de\bigl(e_1\bigr) & = &\displaystyle{ a_1e_1\ot e_1+\sum_{i=1}^{m+n}a_i^1\al_1\dots\al_i\ot e_1}, \\
    \de\bigl(e_{m+n}\bigr) & = & \displaystyle{b_ne_{m+n}\ot e_{m+n}+\sum_{i=1}^{m+n}b_i^ne_{m+n}\ot\al_i\dots\al_{m+n}
    }.
  \end{array}
$$
The coproduct in the path $\al_1\dots\al_{m+n}$ is
$$\de\bigl(\al_1\dots\al_{m+n}\bigr)=\de\bigl(e_1\bigr)\bigl(1\ot\al_1\dots\al_{m+n}\bigr)=\bigl(\al_1\dots\al_{m+n}\ot 1\bigr)\de\bigl(e_{m+n}\bigr),$$
then\\
$\displaystyle{a_1e_1\ot\al_1\dots\al_{m+n}+\sum_{i=1}^{m+n}a_i^1\al_1\dots\al_i\ot\al_1\dots\al_{m+n}=b_n\al_1\dots\al_{m+n}\ot
e_{m+n}}$\\
$\displaystyle{+\sum_{i=1}^{m+n}b_i^n\al_1\dots\al_{m+n}\ot\al_i\dots\al_{m+n}.}$

Consequently $a_i^1=b_j^n=0$, for all $i=1,\dots,m+n-1$, for all
$j=2,\dots,m+n$ and $a_{m+n}^1=b_1^{n}$,

and
$$\begin{array}{rcl}
    \de\bigl(e_1\bigr) & = &\displaystyle{ a\al_1\dots\al_{m+n}\ot e_1}, \\
    \de\bigl(e_{m+n}\bigr) & = & \displaystyle{ae_{m+n}\ot\al_1\dots\al_{m+n}
    }.
  \end{array}
$$
Repeating the procedure of the previous lemma we can prove that
$$\de\bigl(e_i\bigr)=a\al_i\dots\al_{m+n}\ot\al_1\dots\al_{i-1},\quad\forall\;
i=\dots, m+n.$$ In the vertex $e_0$ the situation is different
$$\begin{array}{rcl}
    \de\bigl(e_0\bigr) & = & \displaystyle{\de\bigl(e_0\bigr)\bigl(1\ot e_0\bigr)=\bigl(e_0\ot 1\bigr)\de\bigl(e_0\bigr)} \\
      & = & \displaystyle{c_0e_0\ot e_0+\sum_{j=1}^nc_j\al_{m+1}\dots\al_{m+j}\ot e_0 +\sum_{j=1}^mc^je_0\ot\al_k\dots\al_m}\\
      & + & \displaystyle{\sum_{j=1}^n\sum_{k=1}^mc_{jk}\al_{m+1}\dots\al_{m+j}\ot\al_k\dots\al_m }\\
      & + & \displaystyle{\sum_{j=1}^rd_je_0\ot\be_j\dots\be_r+\sum_{j=1}^n\sum_{k=1}^rd_{jk}\al_{m+1}\dots\al_{m+j}\ot\be_j\dots\be_r. }
  \end{array}
$$
By the other hand $$\de\bigl(e_{m+n+1}\bigr)=be_{m+n+1}\ot
e_{m+n+1}+\sum_{i=1}^rb_i\be_1\dots\be_i\ot e_{m+n+1}.$$ Now,
consider the path $\be_1\dots\be_r$, then the coproduct is described
by
$$\de\bigl(\be_1\dots\be_r\bigr)=\de\bigl(e_{m+n+1}\bigr)\bigl(1\ot\be_1\dots\be_r\bigr)=\bigl(\be_1\dots\be_r\ot 1\bigr)\de\bigl(e_0\bigr),$$
then
$$\begin{array}{rr}
\displaystyle{ be_{m+n+1}\ot\be_1\dots\be_r+\sum_{i=1}^rb_i\be_1\dots\be_i\ot\be_1\dots\be_r = }& \displaystyle{ \sum_{j=1}^mc^j\be_1\dots\be_r\ot\al_k\dots\al_m}\\
 &\displaystyle{+\sum_{j=1}^rd_j\be_1\dots\be_r\ot\be_j\dots\be_r},
 \end{array}
 $$
therefore $b=c_0=c^j=0$, for all $j=1,\dots,m$, $d_j=b_i=0$, for all
$j=2,\dots,r,\; i=1,\dots, r-1$ and $b_r=d_1$. Using this we
conclude that
$$\begin{array}{rcl}
    \de\bigl(e_{m+n+1}\bigr) & = &\displaystyle{ b\be_1\dots\be_r\ot e_{m+n+1}}, \\
    \de\bigl(e_0\bigr) & = & \displaystyle{be_0\ot\be_1\dots\be_r+\sum_{j=1}^nc_j\al_{m+1}\dots\al_{m+j}\ot e_0} \\
     & + & \displaystyle{ \sum_{j=1}^n\sum_{k=1}^mc_{jk}\al_{m+1}\dots\al_{m+j}\ot\al_k\dots\al_m +\sum_{j=1}^n\sum_{k=1}^rd_{jk}\al_{m+1}\dots\al_{m+j}\ot\be_k\dots\be_r
     }.
  \end{array}
$$
The next step is to consider the path $\al_{m+1}\dots\al_{m+n}$, for
this path the coproduct is determined by
$$\de\bigl(\al_{m+1}\dots\al_{m+n}\bigr)=\bigl(\al_{m+1}\dots\al_{m+n}\ot 1\bigr)\de\bigl(e_{m+n}\bigr)=a\al_{m+1}\dots\al_{m+n}\ot\al_1\dots\al_{m+n}.$$
By the other hand, this coproduct is determined by\\
$\displaystyle{\de\bigl(\al_{m+1}\dots\al_{m+n}\bigr)=\de\bigl(e_0\bigr)\bigl(1\ot\al_{m+1}\dots\al_{m+n}\bigr)=\sum_{j=1}^nc_j\al_{m+1}\dots\al_{m+j}\ot\al_{m+1}\dots\al_{m+n}}$\\
$\displaystyle{+\sum_{j=1}^n\sum_{k=1}^mc_{jk}
\al_{m+1}\dots\al_{m+j}\ot\al_k\dots\al_{m+n}}$.\\ Comparing the
expressions we have $c_j=0$, for all $j=1,\dots,n$, $c_{jk}=0$, for
all $j=1,\dots, n-1$, $i=2,\dots,m$ and $a=c_{n1}$. Consequently
$$\de\bigl(e_0\bigr)=a\al_{m+1}\dots\al_{m+n}\ot\al_1\dots\al_m+be_0\ot\be_1\dots\be_r+\sum_{j=1}^n\sum_{k=1}^rd_{jk}\al_{m+1}\dots\al_{m+j}\ot\be_k\dots\be_r.$$
Finally, we consider the path $\al_1\dots\al_m$, as before, we have
two way to define the coproduct in this path
$$\begin{array}{rcl}
    \de\bigl(\al_1\dots\al_m\bigr) & = &\displaystyle{ \de\bigl(e_1\bigr)\bigl(1\ot\al_1\dots\al_m\bigr)=a\al_1\dots\al_{m+n}\ot\al_1\dots\al_m}\\
                                   & = &\displaystyle{ \bigl(\al_1\dots\al_m\ot 1\bigr)\de\bigl(e_0\bigr)= a\al_1\dots\al_{m+n}\ot\al_1\dots\al_m}\\
                                   & + &\displaystyle{b\al_1\dots\al_m\ot\be_1\dots\be_r+\sum_{j=1}^n\sum_{k=1}^rd_{jk}\al_{1}\dots\al_{m+j}\ot\be_k\dots\be_r, }
\end{array}$$
then $b=d_{jk}=0$ for all $j,k$.\\
As $b=0$ we have that $\de\bigl(e_{m+n+1}\bigr)=0$, this implies
that $\de\bigl(e_{m+n+i}\bigr)=0$ for all $i=1,\dots,r$ and
$\displaystyle{\de\bigl(e_0\bigr)=a\al_{m+1}\dots\al_{m+n}\ot\al_1\dots\al_m}$.

It is a simple calculation to prove that $\de$ is coassociative.
\end{proof}
The next result is the symmetrical case to Lemma \ref{lemma1}.
\begin{lem}
The algebra associated to the quiver
$$\scalebox{.8}{\includegraphics{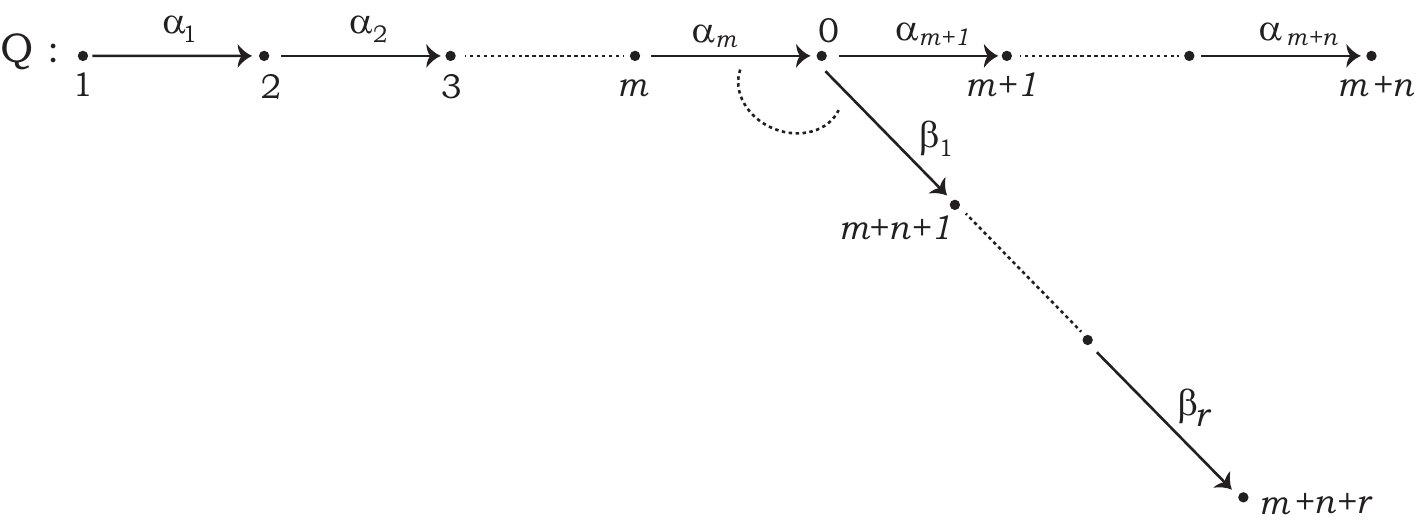}}$$
with the relation $\al_{m}\be_1=0$, admits only one nearly Frobenius
structure, this is
$$\begin{array}{rclcrcl}
        \de\bigl(e_1\bigr) & = & \displaystyle{\al_1\dots\al_{m+n}\ot e_1,} &  & \de\bigl(\al_1\bigr) & = & \displaystyle{\al_1\dots\al_{m+n}\ot\al_1,} \\
        \de\bigl(e_i\bigr) & = & \displaystyle{\al_i\dots\al_{m+n}\ot\al_1\dots\al_{i-1}}, &  & \de\bigl(\al_i\bigr) & = & \displaystyle{\al_i\dots\al_{m+n}\ot\al_1\dots\al_i,}\\
        \de\bigl(e_{m+n}\bigr) & = & \displaystyle{e_{m+n}\ot\al_1\dots\al_{m+n},} &  & \de\bigl(\al_{m+n}\bigr) & = & \displaystyle{\al_{m+n}\ot\al_1\dots\al_{m+n},} \\
  \end{array}$$
$$\de\bigl(e_{m+n+i}\bigr) =  0 \quad\mbox{and}\quad \de\bigl(\be_i\bigr)  =  0\quad\mbox{for all}\; i=1,\dots,r.$$
Accordingly $\op{Frobdim}(A)=1$.
\end{lem}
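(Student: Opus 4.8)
The plan is to mirror the computation of the previous lemma (Lemma \ref{lemma1}), exploiting that the underlying $\al$-chain $e_1\to e_2\to\dots\to e_{m+n}$ is oriented exactly as before; only the $\be$-branch is reversed, since the relation now reads $\al_m\be_1=0$, so $\al_m$ \emph{ends} at the junction vertex $e_0$ and $\be_1$ \emph{leaves} it, rather than the branch feeding into $e_0$. Because the values of $\de$ on the $\al$-chain are forced purely by the linear chain, I expect them to come out identical to those of Lemma \ref{lemma1}, while the reoriented $\be$-branch is again annihilated.

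First I would pin down $\de$ on the two endpoints of the chain. The bimodule identity applied to the idempotents, $\de(e_i)=(e_i\ot 1)\de(e_i)=\de(e_i)(1\ot e_i)$, restricts $\de(e_1)$ to a combination of terms $\al_1\dots\al_i\ot e_1$ and $\de(e_{m+n})$ to a combination of terms $e_{m+n}\ot\al_i\dots\al_{m+n}$. Computing $\de(\al_1\dots\al_{m+n})$ in the two ways $\de(e_1)(1\ot\al_1\dots\al_{m+n})$ and $(\al_1\dots\al_{m+n}\ot 1)\de(e_{m+n})$ and matching coefficients collapses everything to a single scalar $a$, giving $\de(e_1)=a\,\al_1\dots\al_{m+n}\ot e_1$ and $\de(e_{m+n})=a\,e_{m+n}\ot\al_1\dots\al_{m+n}$. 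Propagating along the chain through the intermediate paths $\al_1\dots\al_{i-1}$, exactly as in Lemma \ref{e2} and the previous lemma, yields $\de(e_i)=a\,\al_i\dots\al_{m+n}\ot\al_1\dots\al_{i-1}$ together with the arrow formulas $\de(\al_i)=a\,\al_i\dots\al_{m+n}\ot\al_1\dots\al_i$ and, more generally, $\de(\al_i\dots\al_j)=a\,\al_i\dots\al_{m+n}\ot\al_1\dots\al_j$.

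The delicate step, and the one that genuinely differs, is the junction vertex $e_0$. The general bimodule-compatible expression for $\de(e_0)$ contains, besides the pure $\al$-term, families of terms built from the $\be$-branch (in the notation of the previous lemma, with coefficients $b$ and $d_{jk}$), but now arranged so that the $\be$-paths sit on the \emph{second} tensor factor, reflecting that $\be_1$ leaves $e_0$. The key is to compute $\de(\al_1\dots\al_m)$ in two ways: from $\de(e_1)(1\ot\al_1\dots\al_m)$ it is the pure $\al$-expression $a\,\al_1\dots\al_{m+n}\ot\al_1\dots\al_m$, while from $(\al_1\dots\al_m\ot 1)\de(e_0)$ it would acquire extra $\be$-terms. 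Since the relation $\al_m\be_1=0$ annihilates exactly the composites in which $\al_m$ is followed by $\be_1$, matching the two expressions forces every branch coefficient to vanish; hence $\de(e_0)=a\,\al_{m+1}\dots\al_{m+n}\ot\al_1\dots\al_m$ (the special case of the chain formula at the junction) and, consequently, $\de(e_{m+n+i})=0$ and $\de(\be_i)=0$ for all $i$. I expect this coefficient-matching at $e_0$ to be the main obstacle: one must track carefully which composites survive and which are killed by the relation once the branch is reoriented.

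Finally I would verify coassociativity, which is the same one-line check as in Lemma \ref{e2}: both $(\de\ot 1)\de$ and $(1\ot\de)\de$ send $e_i$ and $\al_i\dots\al_j$ to $a^2\,\al_i\dots\al_{m+n}\ot\al_1\dots\al_{m+n}\ot\al_1\dots\al_{j}$ (with the evident endpoint), while both vanish on $e_0$, the $e_{m+n+i}$ and the $\be_i$. This shows the Frobenius space is one-dimensional, spanned by the displayed structure, so $\op{Frobdim}(A)=1$. As a shortcut confirming the dimension count, one may instead check that this quiver is the opposite of the quiver of Lemma \ref{lemma1} (after relabeling the reversed chain), so $A\cong (A')^{op}$ for the algebra $A'$ of that lemma; by Theorem \ref{theorem1}(1) the assignment $\de\mapsto\tau\circ\de$ is a linear isomorphism of the two Frobenius spaces, and since $\op{Frobdim}(A')=1$ this gives $\op{Frobdim}(A)=1$ at once, the explicit formulas following by applying the twist $\tau$ and the relabeling.
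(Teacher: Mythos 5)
The paper offers no proof of this lemma at all --- it is stated as ``the symmetrical case'' of Lemma \ref{lemma1} --- so your attempt must be measured against the mirrored computation the paper leaves implicit. Your direct route contains a genuine error at the junction vertex. Since the branch now \emph{leaves} $e_0$ (i.e.\ $s(\be_1)=e_0$), the constraint $\de\bigl(e_0\bigr)=\bigl(e_0\ot 1\bigr)\de\bigl(e_0\bigr)=\de\bigl(e_0\bigr)\bigl(1\ot e_0\bigr)$ forces every term of $\de\bigl(e_0\bigr)$ to have its \emph{first} factor starting at $e_0$ and its \emph{second} factor ending at $e_0$; hence the $\be$-paths can occur only in the \emph{first} tensor factor, not the second as you assert. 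This reversal is fatal to your chosen comparison: in $\bigl(\al_1\dots\al_m\ot 1\bigr)\de\bigl(e_0\bigr)$ the left multiplication hits exactly those first factors, and the relation $\al_m\be_1=0$ \emph{annihilates} every branch term, so equating with $\de\bigl(e_1\bigr)\bigl(1\ot\al_1\dots\al_m\bigr)=a\,\al_1\dots\al_{m+n}\ot\al_1\dots\al_m$ yields no constraint at all on the branch coefficients --- the opposite of what you claim. The step that actually kills the branch is the mirrored one: compare the two evaluations of the \emph{outgoing} chain path, $\de\bigl(\al_{m+1}\dots\al_{m+n}\bigr)=\de\bigl(e_0\bigr)\bigl(1\ot\al_{m+1}\dots\al_{m+n}\bigr)=\bigl(\al_{m+1}\dots\al_{m+n}\ot 1\bigr)\de\bigl(e_{m+n}\bigr)$. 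Here the multiplication acts on the second factors, so the branch terms $\be_1\dots\be_j\ot(\cdots)$ survive untouched, and since the other side equals $a\,\al_{m+1}\dots\al_{m+n}\ot\al_1\dots\al_{m+n}$, which contains no such terms, they must all vanish. Once $\de\bigl(e_0\bigr)=a\,\al_{m+1}\dots\al_{m+n}\ot\al_1\dots\al_m$ is established, the relation gives $\de\bigl(\be_1\dots\be_j\bigr)=\de\bigl(e_0\bigr)\bigl(1\ot\be_1\dots\be_j\bigr)=0$, and comparing with $\bigl(\be_1\dots\be_j\ot 1\bigr)\de\bigl(e_{m+n+j}\bigr)$ forces $\de\bigl(e_{m+n+j}\bigr)=0$ and $\de\bigl(\be_j\bigr)=0$.

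Your closing shortcut, however, is correct and by itself constitutes a complete proof --- indeed it is the cleanest formalization of the paper's ``symmetrical case'' remark. Reversing all arrows and relabeling ($e_i\mapsto e_{m+n+1-i}$, $\al_i\mapsto\al_{m+n+1-i}$, $\be_i\mapsto\be_{r+1-i}$, which also exchanges the roles of $m$ and $n$) identifies the present bound quiver with the opposite of that of Lemma \ref{lemma1}, the relation $\be_r\al_{m+1}$ becoming $\al_n\be_1$. The correspondence $\de\mapsto\tau\circ\de$ of Theorem \ref{theorem1}(1) is linear and involutive, hence a linear isomorphism between the Frobenius spaces of the algebra of Lemma \ref{lemma1} and of its opposite, so $\op{Frobdim}(A)=1$, and twisting the explicit coproduct of Lemma \ref{lemma1} and relabeling reproduces exactly the displayed formulas. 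If you lead with this argument, or repair the junction step as indicated above, the proof is sound.
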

\begin{lem}
The algebra associated to the quiver
$$\scalebox{0.8}{\includegraphics{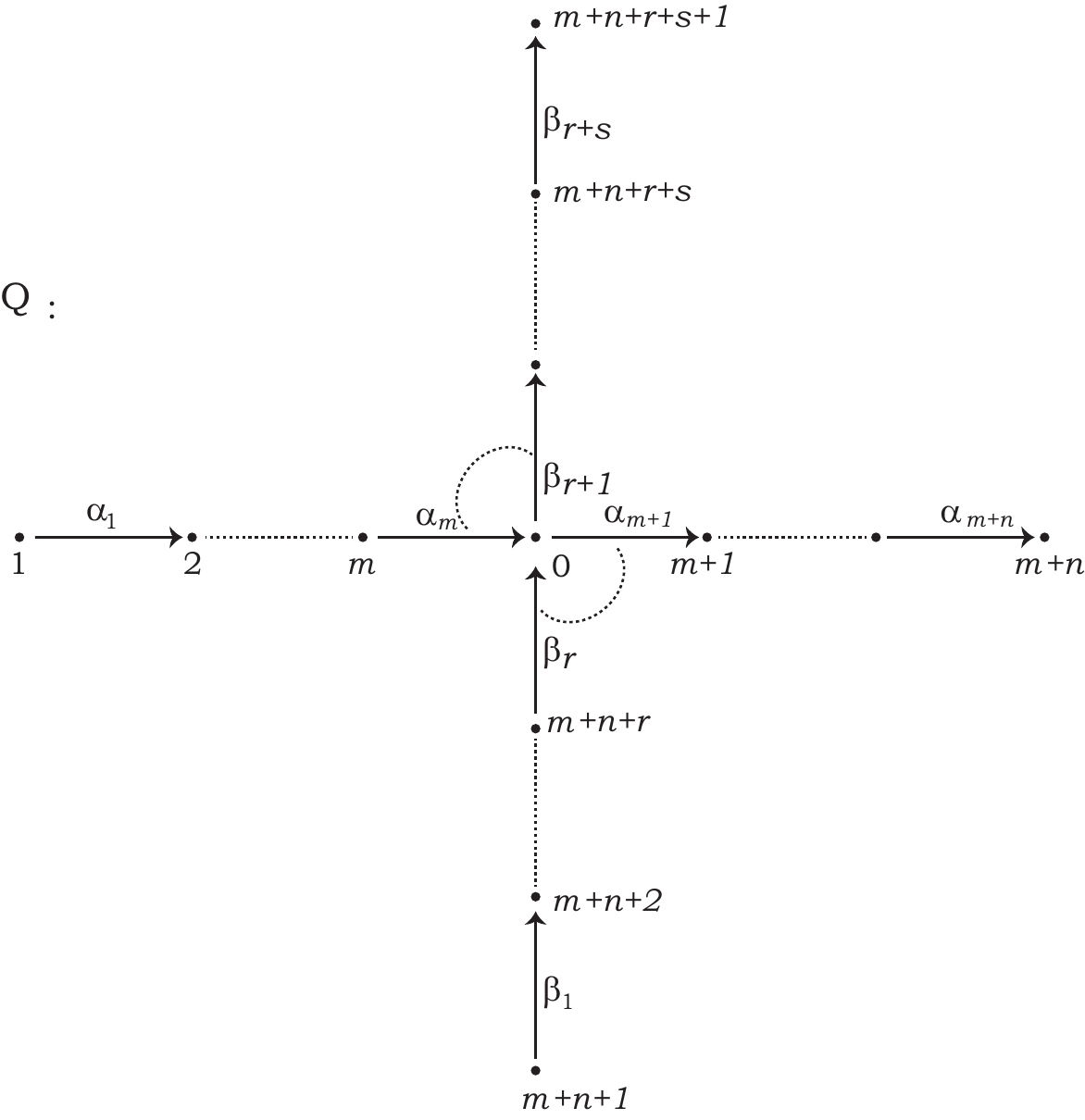}}$$
with relations $\be_r\al_{m+1}=0$ and $\al_m\be_{r+1}=0$,admits two
independent nearly Frobenius structures and a general coproduct is
determined by
$$\begin{array}{rclcrcl}
\displaystyle{ \de\bigl(e_1\bigr)} & = & \displaystyle{a\al_1\dots\al_{m+n}\ot e_1,} &  & \de\bigl(\al_1\bigr) & = & \displaystyle{a\al_1\dots\al_{m+n}\ot\al_1,} \\
\displaystyle{\de\bigl(e_i\bigr)} & = & \displaystyle{a\al_i\dots\al_n\ot\al_1\dots\al_{i-1},} &  & \de\bigl(\al_i\bigr) & = & \displaystyle{a\al_i\dots\al_n\ot\al_1\dots\al_i,} \\ \displaystyle{\de\bigl(e_{m+n}\bigr)} & = & \displaystyle{ae_{m+n}\ot\al_1\dots\al_{m+n},} &  & \de\bigl(\al_{m+n}\bigr) & = & \displaystyle{a\al_{m+n}\ot\al_1\dots\al_{m+n},} \\
\displaystyle{\de\bigl(e_{m+n+1}\bigr)} & = & \displaystyle{b\be_1\dots\be_{r+s}n\ot e_1,} &  & \de\bigl(\be_1\bigr) & = & \displaystyle{b\be_1\dots\be_{r+s}\ot\be_1,} \\
\displaystyle{\de\bigl(e_{m+n+i}\bigr)} & = & \displaystyle{b\be_i\dots\be_{r+s}\ot\be_1\dots\be_{i-1},} &  & \de\bigl(\be_i\bigr) & = & \displaystyle{b\be_i\dots\be_{r+s}\ot\be_1\dots\be_i,}\\
\displaystyle{\de\bigl(e_{m+n+r+s+1}\bigr)} & = &
\displaystyle{be_{m+n+r+s+1}\ot\be_1\dots\be_{r+s},} &  &
\de\bigl(\be_{r+s}\bigr) & =
&\displaystyle{b\be_{r+s}\ot\be_1\dots\be_{r+s},}
\end{array}
$$
$$\de\bigl(e_0\bigr) = a\al_{m+1}\dots\al_{m+n}\ot\al_1\dots\al_m +b \be_{r+1}\dots\be_{r+s}\ot\be_{1}\dots\be_r,$$
where $a,b\in\k$. In this case $\op{Frobdim}(A)=2$.
\end{lem}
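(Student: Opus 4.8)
The plan is to run the same machinery used in Lemma~\ref{e2} and in the two preceding lemmas: first use the idempotent conditions to constrain the shape of $\de$ at each vertex, then propagate along the two $\overrightarrow{\mb{A}}$-type chains, and finally treat the central vertex $e_0$ separately, where the relations $\be_r\al_{m+1}=0$ and $\al_m\be_{r+1}=0$ force the two chains to decouple. I would begin by recording that any nearly Frobenius coproduct satisfies $\de(e_i)=(e_i\ot 1)\de(e_i)=\de(e_i)(1\ot e_i)$, so every elementary tensor occurring in $\de(e_i)$ has first factor a path starting at $e_i$ and second factor a path ending at $e_i$. At the four extreme vertices of the two chains ($e_1$ and $e_{m+n}$ on the $\al$-chain, $e_{m+n+1}$ and $e_{m+n+r+s+1}$ on the $\be$-chain) this already yields simple forms, e.g. $\de(e_1)=a_1 e_1\ot e_1+\sum_i a_i^1\al_1\dots\al_i\ot e_1$. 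Evaluating the full chain $\al_1\dots\al_{m+n}$ in the two ways $\de(e_1)(1\ot\al_1\dots\al_{m+n})=(\al_1\dots\al_{m+n}\ot 1)\de(e_{m+n})$ collapses all but one coefficient, leaving the single parameter $a$, and the identical computation on the $\be$-chain leaves the single parameter $b$. Propagating these along the interior vertices, exactly as in Lemma~\ref{e2}, produces the stated formulas for $\de(e_i)$, $\de(\al_i)$, $\de(e_{m+n+i})$ and $\de(\be_i)$.

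The heart of the argument is the central vertex $e_0$. Its general coproduct is a sum of tensors $\eta\ot\xi$ with $s(\eta)=e_0$ and $t(\xi)=e_0$, so each of $\eta,\xi$ is either an idempotent, an initial segment of the $\al$-chain, or an initial segment of the $\be$-chain. To remove the unwanted mixed terms I would compute $\de(\al_1\dots\al_m)$ in two ways: on one hand it equals $a\,\al_1\dots\al_{m+n}\ot\al_1\dots\al_m$ from the already-determined $\de(e_1)$; on the other hand it equals $(\al_1\dots\al_m\ot 1)\de(e_0)$. Left multiplication by $\al_1\dots\al_m$ annihilates every term of $\de(e_0)$ whose first factor begins with $\be_{r+1}$, precisely because $\al_m\be_{r+1}=0$, while it preserves the $\al$-headed and idempotent-headed terms; comparing with the one-term left-hand side forces every surviving cross term $\al_{m+1}\dots\ot\be_k\dots\be_r$ and every idempotent-headed term to vanish and fixes the $\al$-diagonal term with coefficient $a$. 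The symmetric computation of $\de(\be_1\dots\be_r)=(\be_1\dots\be_r\ot 1)\de(e_0)$, using $\be_r\al_{m+1}=0$, kills the remaining $\be$-out cross terms and fixes the $\be$-diagonal term with coefficient $b$. Together the two computations pin $\de(e_0)$ down to the two-term expression in the statement. This is the step I expect to be the main obstacle, since one must verify that no mixed tensor survives either computation and that the two chains genuinely separate at $e_0$.

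Finally I would check coassociativity. Because the resulting coproduct splits as the sum of a part supported on the $\al$-chain together with its contribution to $e_0$, and a part supported on the $\be$-chain together with its contribution to $e_0$, and each part is an $\overrightarrow{\mb{A}}$-type structure, the verification reduces to the computation already carried out in Lemma~\ref{e2}; the relations guarantee that neither $(\de\ot 1)\de$ nor $(1\ot\de)\de$ produces a mixed tensor at $e_0$, so the two parts never interfere. The two coproducts obtained by specializing $(a,b)=(1,0)$ and $(a,b)=(0,1)$ are supported on disjoint sets of vertices and arrows, hence linearly independent, and every nearly Frobenius coproduct is one of these linear combinations; therefore $\op{Frobdim}(A)=2$.
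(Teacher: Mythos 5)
Your proposal is correct and follows essentially the same route as the paper's proof: constrain the extreme vertices via the idempotent conditions, collapse the coefficients by evaluating the maximal paths $\al_1\dots\al_{m+n}$ and $\be_1\dots\be_{r+s}$ in two ways, propagate to the interior vertices as in the earlier lemmas, and then pin down $\de\bigl(e_0\bigr)$ by computing $\de\bigl(\al_1\dots\al_m\bigr)$ and $\de\bigl(\be_1\dots\be_r\bigr)$ in two ways, with the relations $\al_m\be_{r+1}=0$ and $\be_r\al_{m+1}=0$ killing the mixed terms. In fact your write-up spells out the annihilation argument at $e_0$ and the linear independence of the $(a,b)=(1,0)$ and $(0,1)$ specializations more explicitly than the paper does, which only improves on it.
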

\begin{proof}
First, note that, we can expressed the coproduct in the vertices as
$$\begin{array}{rcl}
     \de\bigl(e_1\bigr) & = & \displaystyle{a^1e_1\ot e_1+\sum_{i=1}^{m+n}a_i\al_1\dots\al_i\ot e_1}, \\
     \de\bigl(e_{m+n}\bigr) & = & \displaystyle{b^ne_{m+n}\ot
     e_{m+n}+\sum_{i=1}^{m+n}b_ie_{m+n}\ot\al_i\dots\al_{m+n}}.
  \end{array}
$$
Applying the coproduct in the path $\al_1\dots\al_{m+n}$ we can
prove that
 $$\begin{array}{rcl}
     \de\bigl(e_1\bigr) & = & \displaystyle{a\al_1\dots\al_{m+n}\ot e_1}, \\
     \de\bigl(e_{m+n}\bigr) & = & \displaystyle{ae_{m+n}\ot
     \al_1\dots\al_{m+n}}.
  \end{array}
$$
By symmetry we have that
 $$\begin{array}{rcl}
     \de\bigl(e_{m+n+1}\bigr) & = & \displaystyle{b\be_1\dots\be_{r+s}\ot e_{m+n+1}}, \\
     \de\bigl(e_{m+n+r+s+1}\bigr) & = & \displaystyle{be_{m+n+r+s+1}\ot
     \be_1\dots\be_{r+s}}.
  \end{array}
$$
Reproducing the calculus of Lemma \ref{lemma1} we can prove that
$$\begin{array}{rcl}
    \de\bigl(e_i\bigr) & = & \displaystyle{a\al_i\dots\al_{m+n}\ot \al_1\dots\al_{i-1}}, \\
     \de\bigl(e_{m+n+i}\bigr) & = & \displaystyle{a\be_{i}\dots\be_{r+s}\ot
     \be_1\dots\be_{i-1}}.
  \end{array}$$
In the vertex $e_0$ the situation is more complicated.
$$\begin{array}{rcl}
    \de\bigl(e_0\bigr) & = & \displaystyle{b_0e_0\ot e_0+\sum_{i=1}^mb_ie_0\ot\al_i\dots\al_{m}+\sum_{j=1}^rc_je_0\ot\be_j\dots\be_r}\\
                       & + & \displaystyle{\sum_{i=1}^{n}b^i\al_{m+1}\dots\al_{m+i}\ot e_0+\sum_{j=1}^sc^j\be_{r+1}\dots\be_{r+j}\ot e_0 }\\
                       & + & \displaystyle{\sum_{i=1}^n\sum_{j=1}^mb_{ij}\al_{m+1}\dots\al_{m+i}\ot\al_j\dots\al_m+\sum_{i=1}^s\sum_{j=1}^rc_{ij}\be_{r+1}\dots\be_{r+i}
                       \ot\be_{j}\dots\be_{r} }\\
                       & + & \displaystyle{\sum_{i=1}^n\sum_{j=1}^rb^{ij}\al_{m+1}\dots\al_{m+i}\ot\be_{j}\dots\be_{r}+\sum_{i=1}^s\sum_{j=1}^mc^{ij}\be_{r+1}\dots\be_{r+i}
                       \ot\al_j\dots\al_m}. \end{array}$$
If we determine the coproduct in the paths $\al_1\dots\al_m$ and
$\be_1\dots\be_r$ we conclude that
$$\de\bigl(e_0\bigr)=a\al_{m+1}\dots\al_{m+n}\ot\al_1\dots\al_m+b\be_{r+1}\dots\be_{r+s}\ot\be_1\dots\be_r.$$
The coproduct in the arrows is determined by the value in the
vertices.

The coassociativity is an easy exercise.
\end{proof}

Now, if we consider a gentle algebra $A$ associated to $Q$, a finite
connected quiver without oriented cycles, we can produce an
algorithm that permit us to determine the number independent nearly
Frobenius structures that the algebra $A$ admits. Next we develop
the algorithm.

As $Q$ is finite we can suppose that $\# Q_0=n$ and $\# Q_1=m$. The
quiver $Q$ is triangular, because it has not cycles. In particular,
there exist a partial order $\preccurlyeq$ of
$\bigl\{1,2,\dots,n\bigr\}$ and the arrows such that
$$\left\{\begin{array}{lcl}
                                                         i\prec j& \mbox{if} & i \rightsquigarrow j\; \bigl(\mbox{$i\neq j$, $i$ precede to $j$}\bigr) \\
                                                         i \preccurlyeq j & \mbox{if} & i=j\;\mbox{or}\; i\prec j
                                                       \end{array}
\right.$$ and
$$\left\{\begin{array}{lcl}
                                                         \al\prec \be& \mbox{if} & t\bigl(\al\bigr) \preccurlyeq\bigl(\be\bigr), \\
                                                         \al \preccurlyeq \be & \mbox{if} & \al=\be\;\mbox{or}\; \al\prec
                                                         \be
                                                       \end{array}
\right.$$

Let $\ma{F}=\bigl\{\mbox{sources of}\; Q\bigr\}$. Note that the
sources of the quiver $Q$ are the minimal elements with the order
$\preccurlyeq$. In addition, any vertex of the quiver $Q$ is a
source or one of the following
$$\scalebox{.9}{\includegraphics{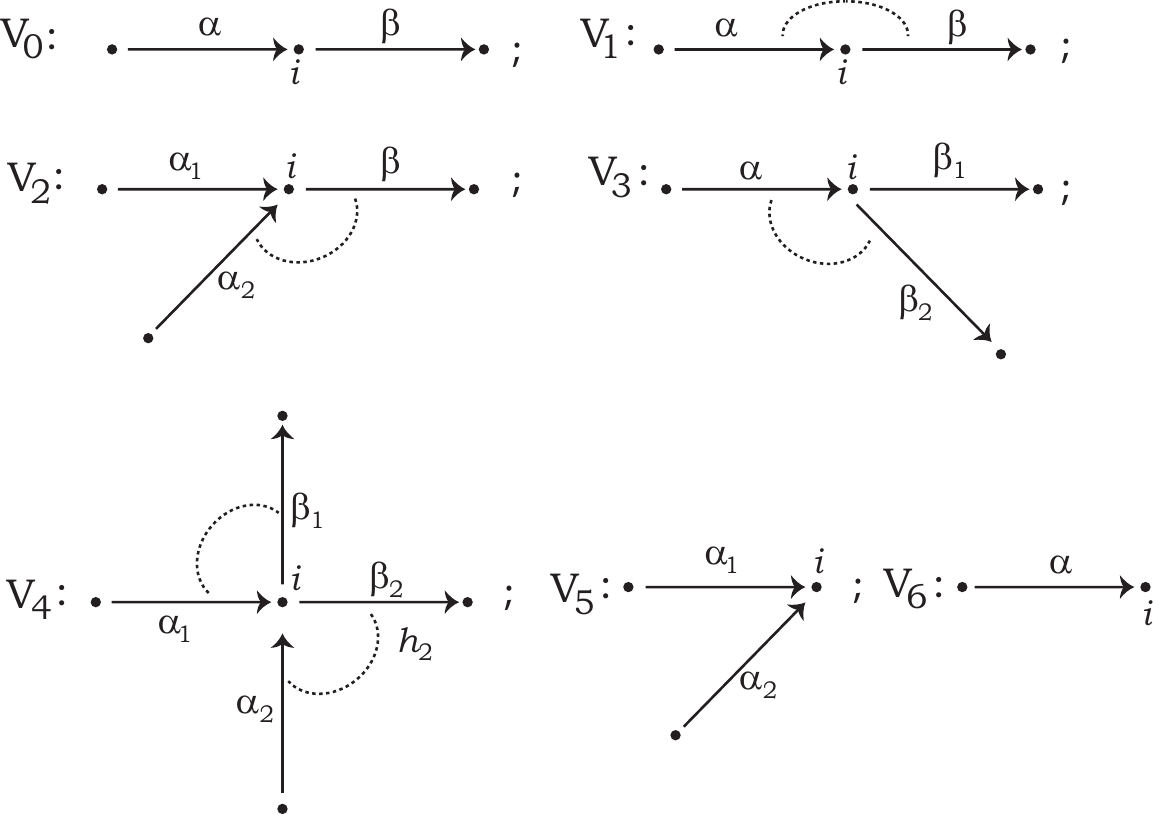}}$$
$$Q_0=\ma{F}\cup V_0\cup V_1\cup V_2\cup V_3\cup V_4\cup V_5\cup V_6.$$
This decomposition permit us to define the \emph{type} of a vertex
as:
$$\op{type}(i):=j\quad \mbox{if}\quad i\in V_j,\; j=0,\dots ,6.$$
With the previous order we can define vectors  associated to the
vertexes and associated to the arrows, these are
$y=\bigl(y_1,\dots,y_n\bigr)\in\mb{Z}^n$ and
$x=\bigl(x_1,\dots,x_m\bigr)\in\mb{Z}^m$. In the next paragraph we
describe the process of construction.

Step 0: Let $f\in\ma{F}$ and $\op{gr}_s(f)=1$, that is there exists a unique $\be\in Q_1$, with $s\bigl(\be\bigr)=f$, we define $y_f=1$ and $x_\be=1$.\\
If $f\in\ma{F}$ and $\op{gr}_s(f)=2$, that is there exist $\be_1, \be_2\in Q_1$, with $s\bigl(\be_1\bigr)=s\bigl(\be_2\bigr)=f$, we define $y_f=0$ and $x_{\be_1}=x_{\be_2}=0$.\\
We introduce a counter $d\in\mb{N}$ starting in $0$.

Step 1: Let $U=Q_0-\ma{F}$ and $i\in U$ minimal.
\begin{itemize}
  \item If $\op{type}(i)=0$ we define $y_i=x_\al$, $x_\be=y_i$ and the new set is $U:=U-\{i\}$. 
  \item If $\op{type}(i)=1$ we define $y_i=l_l(i)l_r(i)+2+x_{\al}-1$, $x_\be=y_i$, where $l_r(i)=\op{max}\bigl\{\op{long}(w):\; w\;\mbox{path}\;, s(w)=i\bigr\}$
and $l_l(i)=\op{max}\bigl\{\op{long}(w):\; w\;\mbox{path}\;,
t(w)=i\bigr\}$ and $U:= U-\{i\}$.
  \item If $\op{type}(i)=2$ we define $y_i=x_{\al_1}$, $x_\be=y_i$ and $U:= U-\{i\}$,
  $$d:=x_{\al_2}-1+d.$$
  \item If $\op{type}(i)=3$ we define $y_i=x_\al$, $x_{\be_2}=0$, $x_{\be_1}=x_{\al}$ and $U:= U-\{i\}$.
  \item If $\op{type}(i)=4$ we define $y_i=y_{j_1}+y_{j_2}$, $x_{\be_2}=x_{\al_1}$, $x_{\be_1}=x_{\al_2}$ and $U:=U-\{i\}$.
  \item If $\op{type}(i)=5$ we define $y_i=0$ and
  $d:=d+x_{\al_1}+x_{\al_2}-\delta\bigl(\al_1\bigr)-\delta\bigl(\al_2\bigr)$,
  where $\delta:Q_1\rt\mb{Z}$ is defined by $\delta(\al)=0$ if
  $x_{\al}=0$ and $\delta(\al)=1$ if $x_\al\geq 1$.
  \item If $\op{type}(i)=6$ we define $y_i=x_\al$ and $d=d+x_\al$.
\end{itemize}
We repeat this process recursiveness over the set $U$, and finally
the number $d$ is the total of nearly Frobenius structures that the
algebra admits.

\begin{cor}
Let $A$ be a gentle algebra associated to $Q$, a finite connected
quiver without oriented cycles. Then, $A$ has finite Frobenius
dimension
\end{cor}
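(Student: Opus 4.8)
The plan is to deduce finiteness directly from the finite-dimensionality of $A$ together with the structural fact, established earlier, that the entire Frobenius space $\ma{E}$ sits inside $A\ot A$. By definition a gentle algebra is finite dimensional, and when $Q$ is finite without oriented cycles the path algebra $\k Q$ is already finite dimensional: since no oriented cycle exists, no path repeats a vertex, so every path has length at most $\#Q_0-1$ and there are only finitely many of them. Hence $A=\k Q/I$ is a finite dimensional unital $\k$-algebra, with unit $\sum_i e_i$.

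First I would invoke Remark \ref{lemma1}, which states that every nearly Frobenius coproduct $\de$ satisfies $\de(x)=(x\ot 1)\de(1)=\de(1)(1\ot x)$ for all $x\in A$, so that $\de$ is completely determined by the single element $\de(1)\in A\ot A$. This makes the evaluation map $\ma{E}\to A\ot A$, $\de\mapsto\de(1)$, a well-defined injective $\k$-linear map. Since the set of nearly Frobenius coproducts is itself a $\k$-vector space (by the theorem showing that $\ma{E}$ is a subspace of the space of all linear maps $A\to A\ot A$), this injection immediately yields
\[
\op{Frobdim}(A)=\op{dim}_\k\bigl(\ma{E}\bigr)\leq\op{dim}_\k\bigl(A\ot A\bigr)=\bigl(\op{dim}_\k A\bigr)^2<\infty,
\]
which already proves the corollary. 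I expect no genuine obstacle along this route: the only points to verify are that $A$ is finite dimensional and unital, both of which are built into the definition of a gentle algebra and the absence of oriented cycles.

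Alternatively, and more in the spirit of the preceding algorithm, one can read finiteness straight off the construction. Because $Q$ is acyclic it is triangular and carries the partial order $\preccurlyeq$, so the algorithm visits each of the finitely many vertices of $U=Q_0-\ma{F}$ exactly once, always selecting a minimal remaining element, and terminates after $\#Q_0$ steps. At every step the newly assigned integers $y_i$ and $x_\be$ are finite, since $l_l(i)$ and $l_r(i)$ are maxima of lengths of paths in a finite acyclic quiver and are therefore bounded by $\#Q_0$; consequently each increment to the counter $d$ is finite, and the terminal value $d$ is a finite integer. The main difficulty in this second route is not finiteness at all but \emph{correctness}, namely proving that the output $d$ actually equals $\op{dim}_\k(\ma{E})$; that is precisely the content of the algorithm's case analysis and of the three preceding lemmas. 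For the present corollary, however, finiteness is immediate once one observes that the algorithm terminates and that the values $y_i$, $x_\be$ remain bounded.
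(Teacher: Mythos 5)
Your first argument is correct, and it is a genuinely different route from the paper's. The paper obtains this corollary as a byproduct of the algorithm it has just described: the algorithm visits each vertex of the finite acyclic quiver once, terminates, and outputs a finite counter $d$ which is asserted to equal the number of independent nearly Frobenius structures; finiteness of $\op{Frobdim}(A)$ is then immediate, but the real burden (carried by the preceding lemmas and the case analysis) is the correctness of that count. Your primary argument bypasses all of this: since a gentle algebra is by definition finite dimensional (and unital, with unit $\sum_i e_i$), and since the bimodule property forces $\de(x)=\bigl(x\ot 1\bigr)\de(1)=\de(1)\bigl(1\ot x\bigr)$, the evaluation map $\ma{E}\rt A\ot A$, $\de\mapsto\de(1)$, is an injective linear map, whence
$$\op{Frobdim}(A)\leq\bigl(\op{dim}_\k A\bigr)^2<\infty.$$
This is more elementary and strictly more general: it proves finiteness for \emph{every} finite dimensional unital algebra, with no use of gentleness, connectedness, or acyclicity (these only enter to guarantee finite-dimensionality, which the definition of gentle already provides); in particular it also covers the cyclic-quiver algebras $A_C$ of the last subsection, where $\op{Frobdim}>\op{dim}_\k$ but still $\op{Frobdim}\leq\bigl(\op{dim}_\k\bigr)^2$. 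What your route does not give, and what the paper's route is really after, is the exact value of $\op{Frobdim}(A)$; as you correctly note in your second, algorithm-based route, finiteness there is trivial and the substantive issue is proving that the output $d$ equals $\op{dim}_\k\bigl(\ma{E}\bigr)$, which the paper asserts rather than proves in full.
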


We apply the previous algorithm in the next two examples.
\begin{example}
The gentle algebra $A$ associated to the quiver
$$\scalebox{.8}{\includegraphics{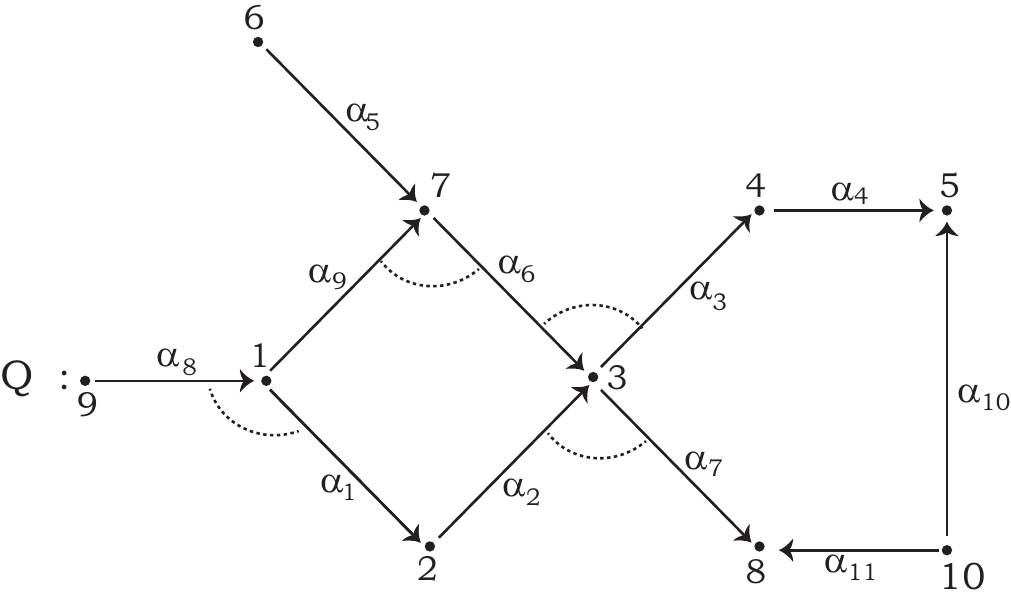}}$$ with relations $\al_9\al_6=0$, $\al_8\al_1=0$, $\al_2\al_7=0$ and $\al_6\al_3=0$, has $\op{Frobdim}(A)=0$, that is the only nearly Frobenius structure that this algebra admits is the trivial ($\de\equiv 0$).

Applying the algorithm we have the next situation
$$\scalebox{.8}{\includegraphics{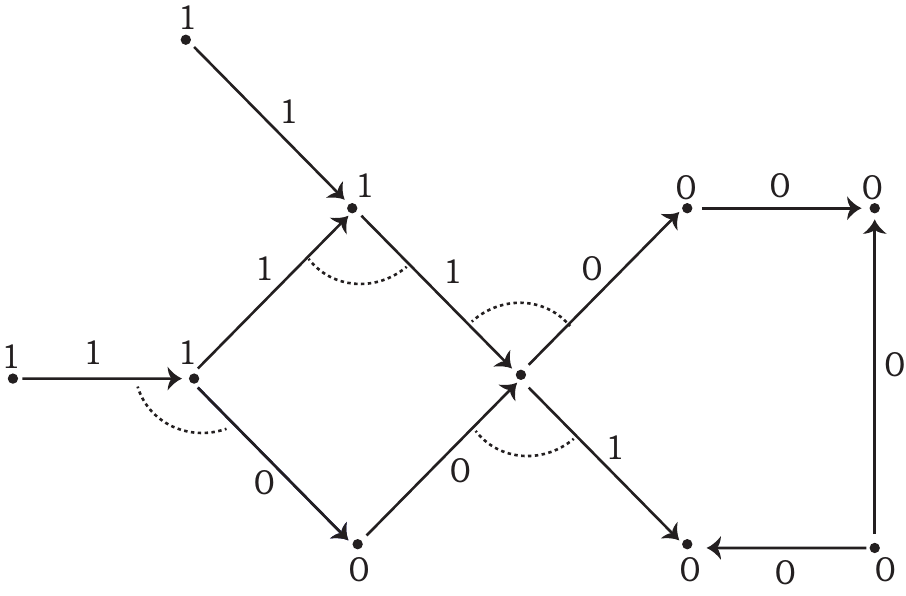}}$$ and the counter $d$ is zero. Then, in this example, we have only the trivial nearly Frobenius coproduct.
\end{example}

\begin{example}
If we consider the algebra $A$ associated to the quiver
$$\scalebox{.8}{\includegraphics{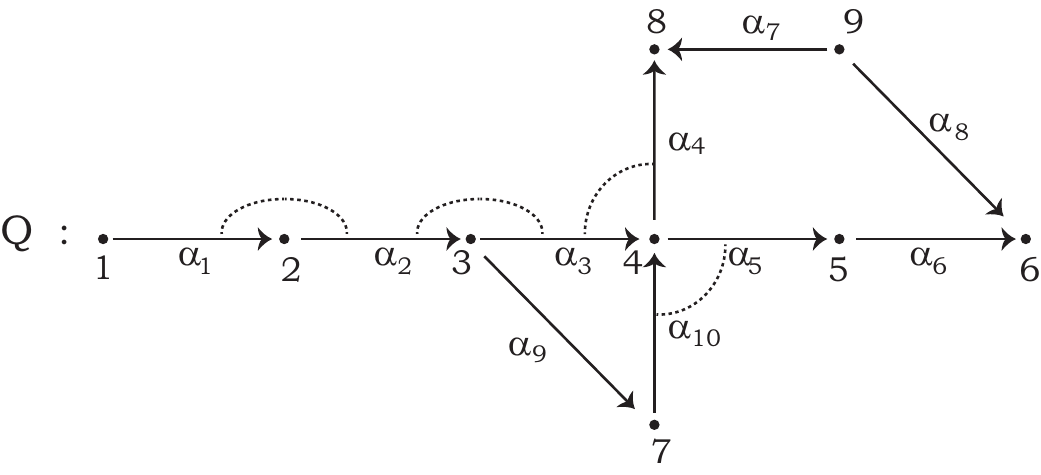}}$$ with relations $\al_1\al_2=0$, $\al_2\al_3=0$, $\al_3\al_4=0$ and $\al_{10}\al_5=0$. It is possible to determine all the nearly Frobenius structures, they are
$$\begin{array}{rcl}
    \de(e_1) & = & a_1\al_1\ot e_1 \\
    \de(\al_1) & = & a_1\al_1\ot\al_1 \\
    \de(e_2) & = & a_1 e_2\ot\al_1 +a_2\al_2\ot\al_1+a_3\al_2\al_9\ot\al_1+a_4\al_2\al_9\al_{10}\ot\al_1+a_5\al_2\al_9\al_{10}\al_4\ot\al_1, \\
    \de & = & 0\quad\mbox{on the other cases}
  \end{array}
$$ In this case $\op{Frobdim}(A)=5$. We determine, applying the algorithm, that the counter $d$ is five and we conclude that $\op{Frobdim}(A)=5$. In the next diagram we represent
the vectors given in the algorithm associated to the vertex and
arrows
$$\scalebox{.8}{\includegraphics{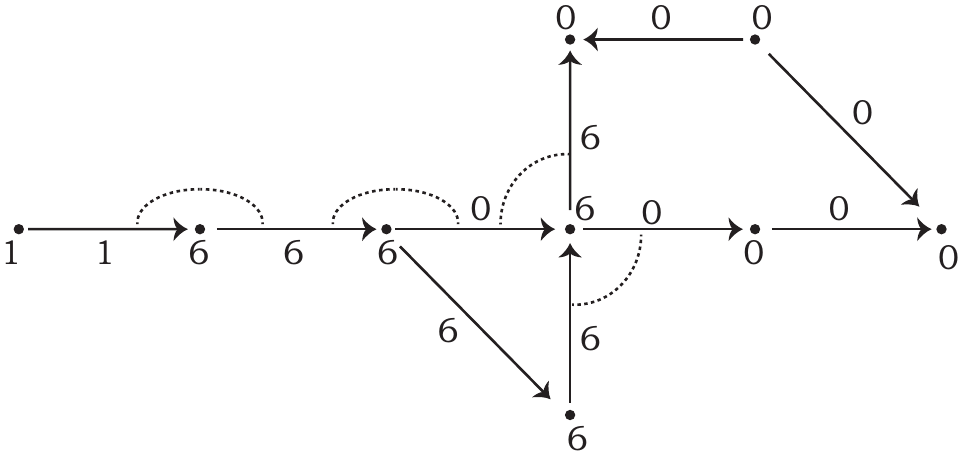}}$$
 then $d=5$.
\end{example}


\subsection{Comparing dimensions}
\hspace{.5cm} In this subsection we determine the Frobenius
dimension of algebras associated to cyclic quivers. Using this
result we exhibit a family of algebras with Frobenius dimension
great that it's dimension over $\k$.

Let $C\bigl(n_1,n_2,\dots,n_m\bigr)$ the quiver
$$\includegraphics{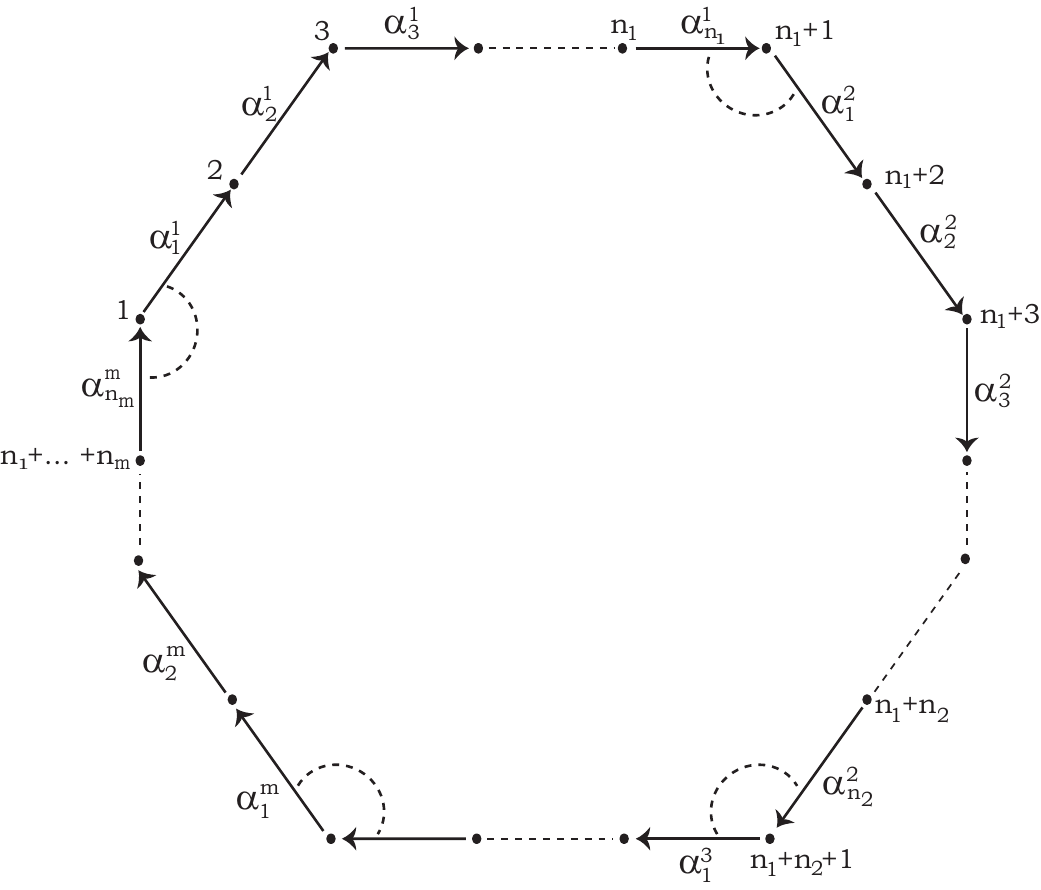}$$
where $m,n_1,n_2,\dots,n_m\in\mb{N}^*$ and
$\displaystyle{A_C=\frac{\k C}{I_C}}$ with $$I_C=\bigl\langle
\al_{n_m}^m\al_1^1,\; \al_{n_i}^i\al_1^{i+1},\; i=1,\dots,
m-1\bigr\rangle.$$

\begin{thm}\label{theorem2}
$\op{Frobdim}(A)=m+\sum_{i=1}^mn_in_{i+1}$, with $n_{m+1}=n_1.$
\end{thm}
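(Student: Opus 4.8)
The plan is to imitate the local analysis already carried out for $\overrightarrow{\mb{A}_n}$ (Lemma \ref{e2}) and for the one-relation gentle quivers, reducing everything to Remark \ref{lemma1}. Write the $m$ arcs of $C(n_1,\dots,n_m)$ as $\al^i_1,\dots,\al^i_{n_i}$ for $i=1,\dots,m$ (indices read mod $m$, so arc $m+1$ is arc $1$), and let $v_i$ be the junction vertex at which $\al^i_{n_i}$ ends and $\al^{i+1}_1$ starts. Since the underlying graph is a cycle, every vertex has exactly one incoming and one outgoing arrow, and the generating relations $\al^i_{n_i}\al^{i+1}_1=0$ say precisely that a path of $\frac{\k C}{I_C}$ is nonzero exactly when it lies inside a single arc, i.e. it is of the form $\al^i_j\cdots\al^i_k$ with $1\le j\le k\le n_i$. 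By Remark \ref{lemma1} a nearly Frobenius coproduct $\de$ is determined by its values $\de(e_w)$ on the idempotents, and the identities $\de(e_w)=(e_w\ot 1)\de(e_w)=\de(e_w)(1\ot e_w)$ force every elementary tensor $p\ot q$ occurring in $\de(e_w)$ to satisfy $s(p)=w$ and $t(q)=w$. Finally, for each arrow $\al\colon a\ra b$ one has the propagation identities $\de(\al)=\de(e_a)(1\ot\al)=(\al\ot 1)\de(e_b)$.

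First I would treat the interior of an arc. On the intermediate vertices of arc $i$ and on the internal paths $\al^i_j\cdots\al^i_k$, the propagation identities are literally those of Lemma \ref{e2} applied to the linear subquiver $v_{i-1}\to\cdots\to v_i$; they collapse the coproduct to a single scalar $a_i\in\k$, yielding $\de(e_w)=a_i\,\al^i_{j+1}\cdots\al^i_{n_i}\ot\al^i_1\cdots\al^i_j$ on the interior vertex $w$ between $\al^i_j$ and $\al^i_{j+1}$, and $\de(\al^i_j\cdots\al^i_k)=a_i\,\al^i_j\cdots\al^i_{n_i}\ot\al^i_1\cdots\al^i_k$. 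This produces the $m$ \emph{arc} parameters $a_1,\dots,a_m$. The point to stress is that the relation $\al^{i-1}_{n_{i-1}}\al^{i}_1=0$ kills any junction term of $\de(e_{v_{i-1}})$ after multiplication by $\al^i_1$, so the arc-$i$ propagation is clean and decoupled from everything else. The genuinely new phenomenon is at a junction $v_i$, which is at once the sink of arc $i$ and the source of arc $i+1$; arguing exactly as in the gentle-quiver lemma yielding $\op{Frobdim}=mn+2$, the source/target constraint together with the two propagation identities forces
$$\de(e_{v_i})=a_{i+1}\,\al^{i+1}_1\cdots\al^{i+1}_{n_{i+1}}\ot e_{v_i}+a_i\,e_{v_i}\ot\al^i_1\cdots\al^i_{n_i}+\sum_{j=1}^{n_i}\sum_{k=1}^{n_{i+1}}c^{\,i}_{jk}\,\al^{i+1}_1\cdots\al^{i+1}_k\ot\al^i_j\cdots\al^i_{n_i},$$
where the cross coefficients $c^{\,i}_{jk}$ are unconstrained, precisely because $\al^i_{n_i}\al^{i+1}_1=0$ makes each product $(\al^i_j\cdots\al^i_{n_i})(\al^{i+1}_1\cdots\al^{i+1}_k)$ vanish, so these tensors are never tied back to the arc scalars through the bimodule conditions. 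This contributes $n_in_{i+1}$ further parameters at $v_i$.

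The hard part will be coassociativity at the junctions; inside an arc it is already the computation of Lemma \ref{e2}. I would verify $(\de\ot 1)\de(e_{v_i})=(1\ot\de)\de(e_{v_i})$ by the same bookkeeping as in the $mn+2$ lemma: on any left factor lying in arc $i+1$ the operator $\de\ot 1$ acts by $a_{i+1}(\text{full arc }i+1)\ot(\text{that factor})$, while on any right factor lying in arc $i$ the operator $1\ot\de$ acts by $(\text{that factor})\ot a_i(\text{full arc }i)$; expanding both sides one finds that the image of the cross terms under $1\ot\de$ is reproduced by the image of the sink term $a_i e_{v_i}\ot\al^i_1\cdots\al^i_{n_i}$ under $\de\ot 1$, and symmetrically for the source term, so the two full expansions coincide term by term. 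Crucially the relations $\al^i_{n_i}\al^{i+1}_1=0$ prevent any summand from coupling two distinct junctions, so the whole check localizes at $v_i$ exactly as in the acyclic case and imposes no relation among $a_i,a_{i+1}$ and the $c^{\,i}_{jk}$. Hence no parameter is lost: the nearly Frobenius coproducts form a $\k$-vector space $\ma{E}$ with the evidently linearly independent basis consisting of the $m$ arc coproducts (set one $a_i=1$, all else $0$) together with the $\sum_i n_in_{i+1}$ junction cross coproducts (set one $c^{\,i}_{jk}=1$, all else $0$), these being supported on pairwise distinct basis tensors $p\ot q$. Counting gives $\op{Frobdim}(A)=m+\sum_{i=1}^m n_in_{i+1}$ with $n_{m+1}=n_1$, as claimed.
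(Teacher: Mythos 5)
Your proposal is correct and follows essentially the same route as the paper's proof: both determine the coproduct on the idempotents via $\de(e_w)=(e_w\ot 1)\de(e_w)=\de(e_w)(1\ot e_w)$, propagate along the maximal arc paths to collapse each arc's interior to a single scalar $a_i$, observe that the relations $\al^i_{n_i}\al^{i+1}_1=0$ decouple the $n_in_{i+1}$ cross terms at each junction from all bimodule constraints, and check that coassociativity imposes no further conditions, giving the count $m+\sum_{i=1}^m n_in_{i+1}$. The only cosmetic difference is that you delegate the interior-of-arc computations to Lemma~\ref{e2} and the earlier gentle-quiver lemmas, where the paper carries them out explicitly.
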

\begin{proof}
We will determine the coproduct in the vertices of the algebra $A$
using the formula
$$\de\bigl(e_i\bigr)=\de\bigl(e_i\bigr)\bigl(1\ot
e_i\bigr)=\bigl(e_i\ot 1\bigr)\de\bigl(e_i\bigr).$$
\begin{itemize}
  \item If $i=1$ then
  $$\begin{array}{rcl}
      \de\bigl(e_1\bigr) & = & \displaystyle{a_1e_1\ot e_1+\sum_{k=1}^{n_m}a_1^ke_1\ot\al_k^m\dots\al_{n_m}^m+\sum_{j=1}^{n_1}a_j^1\al_1^1\dots\al_j^1\ot e_1 }\\
       & + & \displaystyle{\sum_{j=1}^{n_1}\sum_{k=1}^{n_m}a_{jk}^1\al_1^1\dots\al_j^1\ot\al_k^m\dots\al_{n_m}^m}.
           \end{array}
$$
  \item If the index is $n_1+\dots+n_i+1$, $i=1,\dots,m-1$ then
  $$\begin{array}{rcl}
      \de\bigl(e_{n_1+\dots+n_i+1}\bigr) & = & \displaystyle{a_ie_{n_1+\dots+n_i+1}\ot e_{n_1+\dots+n_i+1}+\sum_{k=1}^{n_i}a_k^ie_{n_1+\dots+n_i+1}\ot\al_k^i\dots\al_{n_i}^i }\\
       & + & \displaystyle{\sum_{j=1}^{n_{i+1}}a_i^j\al_1^{i+1}\dots\al_j^{i+1}\ot e_{n_1+\dots+n_i+1}+\sum_{j=1}^{n_{i+1}}\sum_{k=1}^{n_i}a_{jk}^i\al_1^{i+1}\dots\al_j^{i+1}\ot\al_k^i\dots\al_{n_i}^i}.
    \end{array}$$
  \item If $k=2,\dots n_{i}$ then
   $$\begin{array}{rcl}
      \de\bigl(e_{n_1+\dots n_{i-1}+k}\bigr) & = & \displaystyle{b_k^ie_{n_1+\dots n_{i-1}+k}\ot e_{n_1+\dots n_{i-1}+k}+\sum_{l=1}^{k-1}b_l^{ik}e_{n_1+\dots n_{i-1}+k}\ot\al_l^i\dots\al_{k-1}^i}\\
      & + & \displaystyle{\sum_{j=k}^{n_i}b_k^{ij}\al_k^i\dots\al_j^i\ot e_{n_1+\dots n_{i-1}+k} +\sum_{j=k}^{n_i}\sum_{l=1}^{k-1}b_{jl}^{ik}\al_k^i\dots\al_j^i\ot\al_l^i\dots\al_{k-1}^i}.
          \end{array}
  $$
Now, we consider the maximal path $\al_1^i\dots \al_{n_i}^i$, for
this the coproduct satisfies
\begin{equation}\label{equa1}
\de\bigl(\al_1^i\dots\al_{n_i}^i\bigr)=\de\bigl(e_{n_1+\dots
n_{i-1}+1}\bigr)\bigl(1\ot\al_1^i\dots\al_{n_i}^i\bigr)=\bigl(\al_1^i\dots\al_{n_i}^i\ot
1\bigr)\de\bigl(e_{n_1+\dots n_{i}+1}\bigr)\end{equation}

By substitution in (\ref{equa1}) we get:
$$\begin{array}{c}
    \displaystyle{a_{i-1}e_{n_1+\dots+n_{i-1}+1}\ot\al_1^i\dots\al_{n_i}^i+\sum_{j=1}^{n_{i}}a_{i-1}^j\al_1^{i}\dots\al_j^{i}\ot \al_1^i\dots\al_{n_i}^i }\\
    = \\
    \displaystyle{a_i\al_1^i\dots\al_{n_i}^i\ot e_{n_1+\dots+n_i+1}+\sum_{k=1}^{n_i}a_k^i\al_1^i\dots\al_{n_i}^i\ot\al_k^i\dots\al_{n_i}^i }
  \end{array}
$$
then $a_i=a_{i-1}=0$, $a_k^i=0$ $\forall\; k=2,\dots ,n_i$,
$a_{i-1}^j=0$ $\forall\; j=1,\dots,n_i-1$ and $a_1^i=a_{i-1}^{n_i}.$

Therefore
 $$\begin{array}{rcl}
      \de\bigl(e_{n_1+\dots+n_i+1}\bigr) & = & \displaystyle{a_ie_{n_1+\dots+n_i+1}\ot\al_1^i\dots\al_{n_i}^i+a_{i+1}\al_1^{i+1}\dots\al_{n_{i+1}}^{i+1}\ot e_{n_1+\dots+n_i+1} }\\
       & + & \displaystyle{\sum_{j=1}^{n_{i+1}}\sum_{k=1}^{n_i}a_{jk}^i\al_1^{i+1}\dots\al_j^{i+1}\ot\al_k^i\dots\al_{n_i}^i}
    \end{array}$$
If we study the particular paths $\al_1^i\dots\al_{k-1}^i$ we can determine the relation between the coproduct values in the vertices $e_{n_1+\dots +n_{i-1}+1}$ and $e_{n_1+\dots+n_{i-1}+k}$.%

The coproduct in $\al_1^i\dots\al_{k-1}^i$ satisfies
$$\de\bigl(\al_1^i\dots\al_{k-1}^i\bigr)=\de\bigl(e_{n_1+\dots +n_{i-1}+1}\bigr)\bigl(1\ot\al_1^i\dots\al_{k-1}^i\bigr)=\bigl(\al_1^i\dots\al_{k-1}^i\ot 1\bigr)\de\bigl(e_{n_1+\dots+n_{i-1}+k}\bigr)$$
then
$$\begin{array}{c}
     \displaystyle{a_{i}\al_1^i\dots\al_{n_i}^i\ot \al_1^i\dots\al_{k-1}^i  }\\
     =\\
\displaystyle{b_k^i\al_1^i\dots\al_{k-1}^i\ot e_{n_1+\dots n_{i-1}+k}+\sum_{l=1}^{k-1}b_l^{ik}\al_1^i\dots\al_{k-1}^i\ot\al_l^i\dots\al_{k-1}^i}\\
+ \displaystyle{\sum_{j=k}^{n_i}b_k^{ij}\al_1^i\dots\al_j^i\ot
e_{n_1+\dots n_i+k}
+\sum_{j=k}^{n_i}\sum_{l=1}^{k-1}b_{jl}^{ik}\al_1^i\dots\al_j^i\ot\al_l^i\dots\al_{k-1}^i}
    \end{array}
$$
If we compare the expressions  we conclude that $b_k^i=b_k^{ij}=b_l^{ik}=0$ $\forall\;j,l$, $b_{jl}^{ik}=0$ $\forall\; j=k,\dots, n_i-1$, $l=2,\dots,k-1$ and $b_{n_i1}^{ik}=a_i$\\
Consequently
$$\de\bigl(e_{n_1+\dots+n_{i-1}+k}\bigr)=a_i\al_k^i\dots\al_{n_i}^i\ot\al_1^i\dots\al_{k-1}^i.$$

The coassociativity is satisfied by a simple calculus. Then,
Counting the independent coefficients we determine that $A$ admits
$m+\sum_{i=1}^mn_in_{i+1}$ independent nearly-Frobenius structures.
\end{itemize}
\end{proof}

\begin{cor}
If $n_1=n_2=\dots=n_m=t$, with $t\geq 3$, then $\op{Frobdim}(A)>
\op{dim}_\k(A)$.
\end{cor}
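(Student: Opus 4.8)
The plan is to combine Theorem~\ref{theorem2} with a direct dimension count. Specialising the formula of Theorem~\ref{theorem2} to $n_1=\dots=n_m=t$ gives at once
$$\op{Frobdim}(A)=m+\sum_{i=1}^m n_in_{i+1}=m+mt^2=m\bigl(1+t^2\bigr),$$
so the entire problem reduces to computing $\op{dim}_\k(A)$ and comparing the two numbers.

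To compute $\op{dim}_\k(A)$ I would exhibit the standard monomial basis of $A=\k C/I_C$, namely the classes of those paths of $C$ that do not lie in $I_C$. First I would record that $C$ is a single oriented cycle on $N=\sum_{i=1}^m n_i=mt$ vertices, cut into $m$ consecutive arcs (the segments), and that $I_C$ is generated exactly by the $m$ length-two paths $\al_{n_i}^i\al_1^{i+1}$ (indices mod $m$) straddling the $m$ junctions between consecutive arcs. The key observation is that a composable string of arrows survives in $A$ if and only if it crosses no junction; hence every nonzero path of positive length is contained in a single segment. I would then count segment by segment: the $i$-th segment is a linear quiver $\overrightarrow{\mb{A}_{n_i+1}}$ with arrows $\al_1^i,\dots,\al_{n_i}^i$, and its nonzero paths of positive length are the strings $\al_j^i\cdots\al_k^i$ with $1\le j\le k\le n_i$, giving $\tfrac{n_i(n_i+1)}{2}$ of them. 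Adding the $N$ trivial paths (the vertices) yields
$$\op{dim}_\k(A)=N+\sum_{i=1}^m\frac{n_i(n_i+1)}{2}=mt+m\cdot\frac{t(t+1)}{2}=\frac{mt(t+3)}{2}.$$

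Finally I would compare. Dividing the target inequality $m\bigl(1+t^2\bigr)>\tfrac{mt(t+3)}{2}$ by $m>0$ and clearing the denominator reduces it to $2+2t^2>t^2+3t$, that is, to $(t-1)(t-2)>0$. This holds precisely for integers $t\ge 3$, which is exactly the hypothesis; note also that it explains why $t=2$ only gives equality $\op{Frobdim}(A)=\op{dim}_\k(A)$.

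The only genuinely delicate step is the dimension count, and within it the single nontrivial point is the claim that no surviving path crosses a junction. I would justify this by observing that any path through a junction vertex must use the consecutive arrows $\al_{n_i}^i$ and $\al_1^{i+1}$, hence contains the generator $\al_{n_i}^i\al_1^{i+1}$ of $I_C$ as a factor and is therefore zero in $A$; conversely, a string confined to one segment meets no relation, since $I_C$ is generated solely by the junction relations. Everything else — the invocation of Theorem~\ref{theorem2} and the concluding quadratic inequality — is immediate.
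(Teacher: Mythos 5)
Your proposal is correct and follows essentially the same route as the paper: specialise Theorem~\ref{theorem2} to get $\op{Frobdim}(A)=m\bigl(1+t^2\bigr)$, compute $\op{dim}_\k(A)=\frac{m\bigl(t^2+3t\bigr)}{2}$, and reduce the comparison to the quadratic inequality $(t-1)(t-2)>0$. The only difference is that you justify the dimension count (basis of paths avoiding the junction relations) which the paper merely asserts, and you note the equality at $t=2$; both are welcome but not a different method.
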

\begin{proof}
In this case the dimension of $A$, as vector space, is
$$\op{dim}_\k(A)=\frac{m\bigl(t^2+3t\bigr)}{2}$$
and $\op{Frobdim}(A)=m\bigl(1+t^2\bigr)$. If we compare these
expressions we conclude that $\op{Frobdim}(A)> \op{dim}_\k(A)$ if
$t>2$.
\end{proof}

\end{document}